\newtheorem{theorem}{Theorem}
\newtheorem{proposition}[theorem]{Proposition}
\newtheorem{lemma}[theorem]{Lemma}
\theoremstyle{definition}\newtheorem{example}[theorem]{Example}
\theoremstyle{definition}\newtheorem{definition}[theorem]{Definition}
\theoremstyle{definition}\newtheorem{remark}[theorem]{Remark}
\DeclareMathOperator*{\argmin}{argmin}
\DeclareMathOperator*{\argmax}{argmax}
\DeclareMathOperator*{\wlimsup}{\mathrm{w} \text{-} \! \limsup}
\DeclareMathOperator*{\wliminf}{\mathrm{w}\text{-} \!\liminf}
\DeclareMathOperator*{\glim}{\mathrm{G}\text{-} \!\lim}
\DeclareMathOperator*{\elim}{\mathrm{E}\text{-} \!\lim}
\DeclareMathOperator*{\PKlim}{\mathrm{PK}\text{-} \!\lim}
\DeclareMathOperator*{\Mlim}{\mathrm{M}\text{-} \!\lim}
\newcommand{\fonction}[5]{\begin{array}[t]{lrcl}#1 :&#2 &\rightarrow &#3\\&#4& \mapsto &#5 \end{array}}
\newcommand{\fonctionset}[5]{\begin{array}[t]{lrcl}#1 :&#2 &\rightrightarrows &#3\\&#4& \mapsto &#5 \end{array}}
\def\R{\mathbb{R}}
\def\N{\mathbb{N}}
\def\H{\mathrm{H}}
\def\I{\mathrm{Id}}
\def\C{\mathrm{C}}
\def\D{\mathrm{Dom}}
\def\E{\mathcal{E}}
\def\t{\tau}
\def\d{\mathrm{d}}
\def\dom{\mathrm{dom}}
\def\prox{\mathrm{prox}}
\def\Prox{\mathrm{Prox}}
\def\proj{\mathrm{proj}}
\def\int{\mathrm{int}}
\def\epi{\mathrm{Epi}}
\def\Gr{\mathrm{Gr}}
\definecolor{bovert}{RGB}{0,154,0}
\title{Sensitivity analysis of variational inequalities via twice epi-differentiability and proto-differentiability of the proximity operator}
\author{Samir Adly\footnote{Institut de recherche XLIM. UMR CNRS 7252. Universit\'e de Limoges, France. \texttt{samir.adly@unilim.fr}},
Lo\"ic Bourdin\footnote{Institut de recherche XLIM. UMR CNRS 7252. Universit\'e de Limoges, France. \texttt{loic.bourdin@unilim.fr}}, 
}
\begin{document}

\maketitle

\begin{abstract}
In this paper we investigate the sensitivity analysis of parameterized nonlinear variational inequalities of second kind in a Hilbert space. The challenge of the present work is to take into account a perturbation on all the data of the problem. This requires special adjustments in the definitions of the generalized first- and second-order differentiations of the involved operators and functions. Precisely, we extend the notions, introduced and thoroughly studied by R.T.~Rockafellar, of twice epi-differentiability and proto-differentiability to the case of a parameterized lower semi-continuous convex function and its subdifferential respectively. The link between these two notions is tied to Attouch's theorem and to the new concept, introduced in this paper, of convergent supporting hyperplanes. The previous tools allow us to derive an exact formula of the proto-derivative of the generalized proximity operator associated to a parameterized variational inequality, and deduce the differentiability of the associated solution with respect to the parameter. Furthermore, the derivative is shown to be the solution of a new variational inequality involving semi- and second epi-derivatives of the data. An application is given to parameterized convex optimization problems involving the sum of two convex functions (one of them being smooth). The case of smooth convex optimization problems with inequality constraints is discussed in details. This approach seems to be new in the literature and open several perspectives towards theoretical and computational issues in nonlinear optimization.
\end{abstract}

\textbf{Keywords:} Sensitivity analysis; variational inequalities; twice epi-differentiability; semi-differentiability; proto-differentiability; proximity operator; Attouch's theorem; constrained convex optimization.

\medskip

\textbf{AMS Classification:} 49J53; 49J52; 58C20; 49A50; 58C06; 65K10.

\tableofcontents

\section{Introduction}
In the whole paper let $\H$ be a real Hilbert space and let $\langle \cdot , \cdot \rangle$ (resp. $\Vert \cdot \Vert$) be the corresponding scalar product (resp. norm). In the sequel, $\R_+$ denotes the set of all nonnegative real numbers.

\subsection{Context of the paper}
One of the most famous optimization problem consists in finding a point $y$ of a set $\C \subset \H$ that minimizes the distance to a given point $x \in \H$. This historical problem can be rewritten as
$$ y \in \argmin_{z \in \C} \Vert z - x \Vert. $$
If $\C$ is a nonempty closed convex set, it is well-known that the above problem admits a unique solution, denoted by $y = \proj_\C (x)$, called the \textit{metric projection} of $x$ onto $\C$, and characterized by the following \textit{variational inequality}
$$ \forall z \in \C, \quad \langle y,z-y \rangle \geq \langle x,z-y \rangle . $$
As a first step towards the \textit{sensitivity analysis} of the above variational inequality, one would consider a slight perturbation of the point $x\in \H$, replacing it by $x(t)\in \H$ where $t\in\R_+$ is a parameter. The behavior of $y(t) = \proj_\C(x(t))$ was well studied in the literature and is naturally connected to the differentiability properties (in some general sense) of the projection operator $ \proj_\C$. We mention for instance the work of E.H.~Zarantonello~\cite{Zaran} who had shown the existence of directional derivatives for $\proj_\C$ at the boundary of~$\C$. However, it is well-known that outside the set $\C$, the directional differentiability of $\proj_\C$ is not guaranteed (see \cite{shap} for a counter-example in~$\R^2$). On the other hand, A. Haraux~\cite{haraux} and F.~Mignot~\cite{Mig76} published two fundamental papers where the conical differentiability of the projection operator is proved under a \textit{polyhedric} (also called \textit{density}) assumption. In that case, an asymptotic development of $y(t)$ at $t=0$ can be obtained and the derivative $y'(0)$ can be expressed as the projection of $x'(0)$ onto a closed subset of the tangent cone of $\C$ at $y(0)$. We also refer to \cite{FitzPhelps,JBHU2,shap2} for other references concerning the differentiability of the metric projection operator.

\medskip

The theory of variational inequalities, introduced by G.~Stampacchia and G.~Fichera with motivations in unilateral mechanics, has been developed in the 1970's by the French and Italian schools. This theory has been proved to be a useful and effective tool for the study of a wide range of applications, within a unified framework, in many scientific areas such as contact mechanics, mathematical programming, operation research, economy, transportation planning, game theory, etc. In this paper we will focus on a general \textit{nonlinear variational inequality of second kind} which consists in finding $y \in \H$ such that

\vspace{-0.19cm}\noindent
\begin{minipage}{.16\linewidth}
	\begin{equation*}
	\!\!\! (\mathrm{VI}(A,f,x))
	\end{equation*}
\end{minipage}
\begin{minipage}{.8\linewidth}
	\begin{equation*}
	\forall z \in \H, \quad \langle A(y) , z - y \rangle + f(z) - f(y) \geq \langle x , z - y \rangle ,
	\end{equation*}
\end{minipage}\vspace{0.35cm}

where $A: \H \to \H$ is a (possibly nonlinear) operator, $f : \H \to \R \cup \{ +\infty \}$ is a proper lower semi-continuous convex function and $x \in \H$ is given. Recall that $(\mathrm{VI}(A,f,x))$ is said to be \textit{linear} if $A$ is linear, and that $(\mathrm{VI}(A,f,x))$ is said to be of \textit{first kind} if $f = \delta_\C$ coincides with the indicator function of a nonempty closed convex subset~$\C \subset \H$. In the case where $A = \I$ is the identity operator, $(\mathrm{VI}(A,f,x))$ admits a unique solution given by $y = \prox_{f} (x)$, where $\prox_f$ denotes the classical proximity (also well-known as proximal) operator introduced by J.-J.~Moreau~\cite{moreau65} in 1965. Note that if moreover $f = \delta_\C$, then $\prox_f = \proj_\C$. As a consequence, the Moreau's proximity operator can be viewed as a generalization of the projection operator. On the other hand, in the general case of a possibly nonlinear operator $A$, assumed to be Lipschitz continuous and strongly monotone, it can be proved \cite[Proposition~31 p.140]{Brezisarticle} that $(\mathrm{VI}(A,f,x))$ admits a unique solution denoted by $y = \prox_{A,f} (x)$, where $\prox_{A,f}$ can be seen as a generalization of the Moreau's proximity operator $\prox_f$.

\medskip

A first step towards the sensitivity analysis of $(\mathrm{VI}(A,f,x))$ is to consider a slight perturbation of the point $x\in \H$, replacing it by $x(t)\in \H$ where $t\in\R_+$ is a parameter, and to study the behavior of $y(t) = \prox_{A,f}(x(t))$. The key point in the understanding of the differentiability properties of the generalized proximity operator $\prox_{A,f}$ is to investigate generalized second-order differentiation theory. Precisely, R.T.~Rockafellar introduced and thoroughly studied the notions of \textit{twice epi-differentiability}~\cite{Rockepidiff} and \textit{proto-differentiability}~\cite{Rockproto} of functions and set-valued maps respectively. These notions are both based on Painlev\'e-Kuratowski convergence of epigraphs and graphs of difference quotients. In the particular case where $A = \I$ is the identity operator and where $\H$ is finite-dimensional, thanks to Attouch's theorem \cite{Attouch}, R.T.~Rockafellar~\cite{Rocksecond} proved the equivalence between the twice epi-differentiability of $f$, the proto-differentiability of the subdifferential operator $\partial f$ and the proto-differentiability of the Moreau's proximity operator $\prox_f$ (see also \cite[Chapter~13]{RockWets}). Note that C.N.~Do~\cite{Chi} extended these results to the infinite-dimensional Hilbert setting using the concept of Mosco convergence \cite{Mosco2}. Finally, from the twice epi-differentiability of $f$, it can be derived that $y(t) = \prox_f(x(t))$ is differentiable at $t=0$.  In the general case of a possibly nonlinear operator $A$, the additional assumption of semi-differentiability of $A$ (notion introduced by J.-P.~Penot \cite{Penotarticle}) allows to recover easily the differentiability of $y(t) = \prox_{A,f}(x(t))$ at $t=0$. Moreover $y'(0)$ can be expressed as the image of $x'(0)$ under a generalized proximity operator involving the semi-derivative of $A$ and the second epi-derivative of~$f$. In particular, note that $y'(0)$ is then the unique solution of the associated variational inequality.

\medskip

For the calculus rules of epi-derivatives, we refer to the work of R.A.~Poliquin and R.T.~Rockafellar \cite{PolRock2,PolRock} and references therein. In particular, it can be proved that Mignot and Haraux's \textit{polyhedric} assumption on a closed convex set $\C$ is a sufficient condition for the twice epi-differentiability of the corresponding indicator function $\delta_\mathrm{C}$. We refer to the discussion in \cite[Definition~2.8 and Example~2.10]{Chi}. As a consequence, Rockafellar's result encompasses the one by Mignot and Haraux. Before concluding this section, we think that it is important to mention that there exist other approaches in the literature dealing with generalized second-order differentiations. It is not our aim to give here a complete list of references. Nevertheless, to mention just a few, we can refer for example to the work of B.S.~Mordukhovich and R.T.~Rockafellar \cite{MorduRock} based on the dual-type constructions generated by coderivatives of first-order subdifferentials and to the paper of J.B.~Hiriart-Urruty and A.~Seeger \cite{JBHUSeeger} where calculus rules were introduced for set-valued second-order derivatives. 

\subsection{Contributions of the paper}
The challenge of this paper is to investigate the sensitivity analysis of $(\mathrm{VI}(A,f,x))$ that takes into account a perturbation on all the data of the problem, not only on $x$ but also on $A$ and $f$. Precisely, we consider the following perturbed variational inequality which consists in finding $y(t) \in \H$ such that

\vspace{-0.19cm}
\begin{minipage}{.31\linewidth}
	\begin{equation*}
	\!\! \!\!\! \!\!\! (\mathrm{VI}( A(t,\cdot),f(t,\cdot),x(t) ))
	\end{equation*}
\end{minipage}
\begin{minipage}{.68\linewidth}
	\begin{equation*}
	\!\! \forall z \in \H, \;\; \langle A(t,y) , z - y \rangle + f(t,z) - f(t,y) \geq \langle x(t) , z - y \rangle ,
	\end{equation*}
\end{minipage}
\vspace{0.35cm}

where $A(t,\cdot)$, $f(t,\cdot)$ and $x(t)$ satisfy some appropriate assumptions that will be specified in Section~\ref{sec3}. Note that the solution of $(\mathrm{VI}( A(t,\cdot),f(t,\cdot),x(t) ))$ is given by
$$ y(t) = \prox_{A(t,\cdot),f(t,\cdot)} (x(t)). $$
Our main objective in this paper is to derive sufficient conditions on $A(t,\cdot)$, $f(t,\cdot)$ and $x(t)$ under which $y(t)$ is differentiable at $t=0$ and to provide an explicit formula for $y'(0)$. As mentioned in the previous section, R.T.~Rockafellar already dealt with the \textit{$t$-independent framework}, that is, with the particular case where $A(t,\cdot) = A$ and $f(t,\cdot) = f$ are $t$-independent. The concepts introduced by R.T.~Rockafellar cannot be directly applied to the \textit{$t$-dependent framework}. Therefore, we extend in Section~\ref{sec3} the notions of twice epi-differentiability, semi-differentiability and proto-differentiability to the case of parameterized functions, single-valued and set-valued maps respectively. These extensions are not a simple replica of the methodology developed by R.T.~Rockafellar and require several adjustments in the definitions. Moreover we show explicitly in Example~\ref{excontreex} that the situation is more complicated when dealing with the $t$-dependent setting. Precisely, the properness of the second epi-derivative of the one-variable function $f(x) = \vert x \vert$ is not preserved under the perturbation $f(t,x) = \vert x-t \vert$ (see Example~\ref{excontreex}), while it is preserved with the perturbation $f(t,x) = \vert x-t^2 \vert$ (see Example~\ref{excontreex2}). The difference between these two kinds of perturbations is the key point of the understanding of how to handle the $t$-dependent situation. We refer to Section~\ref{secnewassumption} for a detailed discussion about this issue. The new concept of \textit{convergent supporting hyperplane}, introduced in this paper (see Definition~\ref{defCSH}), turns out to be efficient in order to prove the equivalence between the twice epi-differentiability of $f(t,\cdot)$ and the proto-differentiability of its subdifferential $\partial f(t,\cdot)$ (see Theorem~\ref{thmmain1}). Actually the existence of a convergent supporting hyperplane to the second-order difference quotient is shown to be equivalent to the properness of the second epi-derivative of $f(t,\cdot)$ (see Proposition~\ref{propderniere}). Next we derive in Theorem~\ref{thmmain2} the proto-differentiability of $\prox_{A(t,\cdot),f(t,\cdot)}$ and we deduce in Theorem~\ref{thmmain3} that $y(t) = \prox_{A(t,\cdot),f(t,\cdot)} (x(t))$ is differentiable at $t=0$. Moreover we prove that $y'(0)$ can be expressed as the image of $x'(0)$ under a generalized proximity operator involving the semi-derivative of $A(t,\cdot)$ and the second epi-derivative of $f(t,\cdot)$. In particular, note that $y'(0)$ is then the unique solution of the associated variational inequality.

\medskip

Our main result (Theorem~\ref{thmmain3}) encompasses and extends Rockafellar's work, and thus the results of Mignot and Haraux, mentioned in the previous section. For instance, it permits to deal with the differentiability of $t \mapsto \proj_{\C(t)} (x(t))$ where the convex set $\C(t)$ is also perturbed in some sense (see Section~\ref{secappl3}). The originality of the present work is twofold. Firstly, it allows to deal with the sensitivity analysis of variational inequalities where all the data are $t$-dependent, via a nontrivial extension of the notion of twice epi-differentiability originally introduced by R.T. Rockafellar. Secondly, it permits to fill a gap in the literature, with respect to the works of A.J.~King, A.B.~Levy and R.T.~Rockafellar in \cite{KingRock,LevyRock}. In these references, the authors investigated the differentiability of the solutions set to parameterized generalized equations of the form
$$ 0 \in A(t,y)+B(t,y) $$
via the semi- and proto-derivatives of the involved single- and set-valued maps $A$ and $B$ respectively. However, no link has been explored with the twice epi-differentiability in the case where the set-valued map $B(t,\cdot) = \partial f(t,\cdot)$ coincides with the subdifferential of a parameterized convex function $f(t,\cdot)$.

\subsection{Applications and additional comments}
As an application of the theoretical results developed in this paper, we investigate the sensitivity analysis of parameterized convex optimization problems in Section~\ref{sec5}. More precisely, we first consider in Section~\ref{secappl1} the general parameterized minimization problem of the sum of two lower semi-continuous convex functions $f$ and $g$ ($g$ being smooth). We show, under some appropriate assumptions, that the derivative at $t=0$ of the perturbed solution is still a solution of a convex optimization problem involving the second epi-derivative of $f$ and a quadratic term involving the Hessian matrix of $g$ (see Proposition~\ref{propgenefin} for details). An illustrative example in one dimension is provided in Section~\ref{secappl2} showing both the nontriviality of the $t$-dependent setting and the applicability of our theoretical results. Finally, the sensitivity analysis of parameterized smooth convex optimization problems with inequality constraints in a finite-dimensional setting is discussed in details in Section~\ref{secappl3}.  

\medskip

We end the paper with two discussions in Section~\ref{sec6}. Firstly, we justify our choice of the formula of the second-order difference quotient of the twice epi-differentiability in the $t$-dependent setting (see Section~\ref{secform} for details). Secondly, we show that the equivalence between the twice epi-differentiabilities of a convex function and its conjugate is not preserved in the $t$-dependent case and requires some additional assumptions (that are automatically satisfied in the $t$-independent framework). We refer to Section~\ref{secdual} for details.

\subsection{Organization of the paper}
The paper is organized as follows. Section~\ref{secnot} is devoted to the main notations, definitions and auxiliary results from convex analysis. Section~\ref{sec3} is dedicated to the detailed setting of the paper and to the presentation of its main objective. Then, the extensions of the notions of generalized differentiability (semi-differentiability, proto-differentiability and twice epi-differentiability) to the $t$-dependent framework are specified. In Section~\ref{secmainresults}, we state and prove our main results (Theorems~\ref{thmmain1}, \ref{thmmain2} and \ref{thmmain3}) where the new notion of convergent supporting hyperplane is introduced. In Section~\ref{sec5}, as an application of our theoretical results, we investigate the sensitivity analysis of parameterized convex optimization problems. We conclude this paper with some additional comments in Section~\ref{sec6}.

\section{Recalls on convergence notions and convex analysis}\label{secnot}

In the next subsections we introduce some notations useful throughout the paper. We first give recalls about Painlev\'e-Kuratowski and Mosco convergences (see Section~\ref{secPKMconv}), graphical convergence (see Section~\ref{secGconv}) and epi-convergence (see Section~\ref{secEconv}). We continue with basics of convex analysis (see Section~\ref{secconvanal}) and we conclude with recalls about Attouch's theorems (see Section~\ref{secattouch}). We refer to standard books like~\cite{Attouch,JBHU,Penot,RockWets} and references therein.

\subsection{Two convergence modes of a parameterized family of subsets}\label{secPKMconv}
In this paper we denote by $\d (\cdot,S)$ the classical distance function to any subset $S$ of $\H$. 

\medskip

Let $(S_\t)_{\t > 0}$ be a parameterized family of subsets of $\H$. The \textit{outer}, \textit{weak-outer}, \textit{inner} and \textit{weak-inner limits} of $(S_\t)_{\t > 0}$ when $\t \to 0$ are respectively defined by
$$ \begin{array}{rcl}
\limsup S_\t & \!\! := \!\! & \{ x \in \H \mid \exists (t_n)_{n} \to 0, \; \exists (x_n)_{n} \rightarrow x, \; \forall n \in \N, \; x_n \in S_{t_n} \} , \\[5pt]
\wlimsup S_\t & \!\! := \!\! & \{ x \in \H \mid \exists (t_n)_{n} \to 0, \; \exists (x_n)_{n} \rightharpoonup x, \; \forall n \in \N, \; x_n \in S_{t_n} \} , \\[5pt]
\liminf S_\t & \!\! := \!\! & \{ x \in \H \mid \forall (t_n)_{n} \to 0, \; \exists (x_n)_{n} \rightarrow x, \; \exists N \in \N, \; \forall n \geq N, \; x_n \in S_{t_n}  \} , \\[5pt]
\wliminf S_\t & \!\! := \!\! & \{ x \in \H \mid \forall (t_n)_{n} \to 0, \; \exists (x_n)_{n} \rightharpoonup x, \; \exists N \in \N, \; \forall n \geq N, \; x_n \in S_{t_n}  \} ,
\end{array} $$
where $\to$ (resp. $\rightharpoonup$) denotes the strong (resp. weak) convergence in $\H$. Note that the four following inclusions always hold true:
$$ \liminf S_\t \subset \limsup S_\t \subset \wlimsup S_\t \;\; \text{and} \;\; \liminf S_\t \subset \wliminf S_\t \subset \wlimsup S_\t . $$
In the whole paper, note that all limits with respect to $\t$ will be considered for $\t \to 0$. For the ease of notations, when no confusion is possible, the notation $\t \to 0$ will be removed.

\begin{remark}\label{rkouterinnerlimit}
	Let $(S_\t)_{\t > 0}$ be a parameterized family of subsets of $\H$. It can be shown that $ \limsup S_\t = \{ x \in \H \mid \liminf \d (x,S_\t) = 0 \} $ and $ \liminf S_\t = \{ x \in \H \mid \limsup \d (x,S_\t) = 0 \} $ (see, e.g., \cite[Proposition~1.42 p.35]{Penot}). We can deduce that $\limsup S_\t$ and $\liminf S_\t$ are closed subsets of $\H$. 
\end{remark}

\begin{remark}\label{rkouterinnerlimit2}
	Convexity is a stable property under inner and weak-inner limits. Precisely, if $(S_\t)_{\t > 0}$ is a parameterized family of convex subsets of $\H$, then $\liminf S_\t$ and $\wliminf S_\t$ are also convex. This result is not true in general for $\limsup S_\t$ and $\wlimsup S_\t$ (see, e.g., \cite[p.119]{RockWets}).
\end{remark}

Let us recall the two following convergence modes for a parameterized family $(S_\t)_{\t > 0}$ of subsets of $\H$.

\begin{definition}[Painlev\'e-Kuratowski convergence]
	A parameterized family $(S_\t)_{\t > 0}$ of subsets of~$\H$ is said to be \textit{PK-convergent} if
	$$ \limsup S_\t \subset \liminf S_\t .$$
	In that case, we denote by $  \PKlim S_\t :=  \liminf S_\t =  \limsup S_\t $.
\end{definition}

\begin{definition}[Mosco convergence]
	A parameterized family $(S_\t)_{\t > 0}$ of subsets of~$\H$ is said to be \textit{M-convergent} if
	$$ \wlimsup S_\t \subset \liminf S_\t .$$
	In that case, we denote by $  \Mlim S_\t :=  \liminf S_\t = \limsup S_\t = \wliminf S_\t = \wlimsup S_\t $.
\end{definition}

\begin{remark}
	(i) Let $(S_\t)_{\t > 0}$ be a parameterized family of subsets of $\H$. If $(S_\t)_{\t > 0}$ M-converges, then $(S_\t)_{\t > 0}$ PK-converges. In that case $\PKlim S_\t = \Mlim S_\t  $. (ii) If $\H$ is finite-dimensional, the Painlev\'e-Kuratowski and the Mosco convergences clearly coincide.
\end{remark}

\subsection{Graphical convergence of a parameterized family of set-valued maps}\label{secGconv}
For a set-valued map $A : \H \rightrightarrows \H$ on $\H$, the {\it domain} of $A$ is given by $ \D (A) := \{ x \in \H \mid A(x) \neq \emptyset \}$ and its \textit{graph} is defined by $ \Gr (A) := \{ (x,y) \in \H \times \H \mid y \in A(x) \} $. We denote by $A^{-1} : \H \rightrightarrows \H$ the set-valued map defined by
$$ A^{-1} (y) := \{ x \in \H \mid y \in A(x) \}, $$
for all $y \in \H$. In particular, for all $x$, $y \in \H$, it holds that 
$$
(x,y) \in \Gr (A) \Longleftrightarrow y \in A(x) \Longleftrightarrow x \in A^{-1} (y) \Longleftrightarrow (y,x) \in \Gr (A^{-1}) .
$$

\begin{definition}[Graphical convergence]
	A parameterized family $(A_\t)_{\t > 0}$ of set-valued maps on $\H$ is said to be \textit{G-convergent} if $(\Gr (A_\t))_{\t > 0}$ is PK-convergent. In that case, we denote by $\glim A_\t : \H \rightrightarrows \H $ the set-valued map characterized by its graph as follows:
	$$ \Gr \left( \glim A_\t \right) := \PKlim \Gr ( A_\t ) . $$
\end{definition}

\begin{remark}
	Let $(A_\t)_{\t > 0}$ be a parameterized family of set-valued maps on $\H$. Then, $(A_\t)_{\t > 0}$ is G-convergent if and only if $(A^{-1}_\t)_{\t > 0}$ is G-convergent. In that case, it holds that $ \glim ( A^{-1}_\t ) = ( \glim A_\t )^{-1} $.
\end{remark}

\begin{remark}\label{propgraphconv}
	Let $(A_\t)_{\t > 0}$ be a $G$-convergent parameterized family of set-valued maps on~$\H$ and let $A := \glim A_\t$. If $(z_\t,\xi_\t)_{\t > 0} \to (z,\xi)$ with $(z_\t,\xi_\t) \in \Gr (A_\t)$ for all $\t > 0$, then $(z,\xi) \in \Gr (A)$.
\end{remark}

\subsection{M-convergence of a parameterized family of extended-real-valued functions}\label{secEconv}
Let $\overline{\R} := \R \cup \{ - \infty , +\infty \}$. For an extended-real-valued function $f : \H \to \overline{\R}$ on $\H$, the {\it domain} of $f$ is defined by $\dom (f) := \{ x \in \H \mid f(x) < +\infty \} $ and its \textit{epigraph} is given by
$$ \epi (f) := \{ (x,\lambda) \in \H \times \R \mid f(x) \leq \lambda \}. $$ 
We denote by $\E_\H$ the \textit{set of all epigraphs} on $\H$, that is, $S \in \E_\H$ if and only if there exists an extended-real-valued function $f : \H \to \overline{\R}$ such that $S = \epi (f)$. 


\begin{remark}
	The set $\E_\H$ is stable under outer and inner limits (see, e.g., \cite[p.240]{RockWets}). Precisely, if $(S_\t)_{\t > 0}$ is a parameterized family of subsets of $\H \times \R$ such that $S_\t \in \E_\H$ for all $\t > 0$, then $\lim\sup S_\t$ and $\lim\inf S_\t$ both belong to $\E_\H$.
\end{remark}

\begin{definition}[M-convergence]
	A parameterized family $(f_\t)_{\t > 0}$ of extended-real -valued functions on $\H$ is said to be M-convergent if $(\epi (f_\t))_{\t > 0}$ is M-convergent. In that case, we denote by $\elim f_\t : \H \to \overline{\R} $ the extended-real-valued function characterized by its epigraph as follows:
	$$ \epi ( \elim f_\t ) := \Mlim \epi ( f_\t ) . $$
\end{definition}

Recall the following characterization of epi-convergence. We refer to \cite[Proposition~3.19 p.297]{Attouch} or \cite[Proposition~7.2 p.241]{RockWets} for details.

\begin{proposition}\label{propcharact}
	Let $f$ be an extended-real-valued function on $\H$ and let $(f_\t)_{\t > 0}$ be a parameterized family of extended-real-valued functions on $\H$. Then $(f_\t)_{\t > 0}$ M-converges with $f = \elim f_\t $ if and only if, for all $z \in \H$, there exists $(z_\t )_{\t > 0} \to z$ such that $\limsup f_\t (z_\t) \leq f(z)$ and, for all $(z_\t )_{\t > 0} \rightharpoonup z$, $\liminf f_\t (z_\t) \geq f(z)$.
\end{proposition}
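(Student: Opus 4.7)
The plan is to reduce the equivalence to the epigraph level and then translate each half of the Mosco condition into one of the stated sequential conditions. By definition $f = \elim f_\t$ means $\epi(f) = \Mlim \epi(f_\t)$; using the general chain of inclusions $\liminf \subset \limsup \subset \wlimsup$ together with $\liminf \subset \wliminf \subset \wlimsup$, this reduces to the two set inclusions
\begin{equation*}
\epi(f) \subset \liminf \epi(f_\t) \quad \text{and} \quad \wlimsup \epi(f_\t) \subset \epi(f).
\end{equation*}
I expect the first inclusion to match the $\limsup$-approximation condition and the second to match the $\liminf$-minorization condition, so the whole proof should split into two independent equivalences.

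For the first equivalence, in the direct implication I would argue as follows. Given $z \in \H$ with $f(z) < +\infty$, the point $(z,f(z))$ lies in $\liminf \epi(f_\t)$, so by the distance characterization in Remark~\ref{rkouterinnerlimit} one has $\d((z,f(z)),\epi(f_\t)) \to 0$. An almost-projection argument then produces, for each $\t > 0$, a pair $(z_\t,\lambda_\t) \in \epi(f_\t)$ converging strongly in $\H \times \R$ to $(z,f(z))$, and this yields $z_\t \to z$ with $\limsup f_\t(z_\t) \leq \lim \lambda_\t = f(z)$; the case $f(z) = +\infty$ is handled trivially by $z_\t := z$. For the converse, I would pick $(z,\lambda) \in \epi(f)$, apply (i) to get $z_\t \to z$ with $\limsup f_\t(z_\t) \leq f(z) \leq \lambda$, and set $\lambda_\t := \max(f_\t(z_\t),\lambda)$; this lies in $\R$ for $\t$ small, satisfies $(z_\t,\lambda_\t) \in \epi(f_\t)$, and converges to $\lambda$ since $\liminf \lambda_\t \geq \lambda$ while $\limsup \lambda_\t \leq \max(f(z),\lambda) = \lambda$.

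For the second equivalence, the implication from the set inclusion to (ii) will be done by contradiction. Given $z_\t \rightharpoonup z$ and supposing $\alpha := \liminf f_\t(z_\t) < f(z)$, I would extract a sequence $t_n \to 0$ along which $f_{t_n}(z_{t_n}) \to \alpha$; for any real $\beta$ with $\alpha < \beta < f(z)$, the pair $(z_{t_n},\beta)$ eventually belongs to $\epi(f_{t_n})$ and converges weakly to $(z,\beta)$, so $(z,\beta) \in \wlimsup \epi(f_\t) \subset \epi(f)$, forcing $f(z) \leq \beta$; letting $\beta \to \alpha^+$ (or letting $\beta \to -\infty$ when $\alpha = -\infty$) contradicts $\alpha < f(z)$. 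The converse implication is immediate: given $(z,\lambda) \in \wlimsup \epi(f_\t)$, unfolding the definition yields $t_n \to 0$ and $(z_n,\lambda_n) \rightharpoonup (z,\lambda)$ with $f_{t_n}(z_n) \leq \lambda_n$, and applying (ii) along this sequence gives $f(z) \leq \liminf f_{t_n}(z_n) \leq \lim \lambda_n = \lambda$.

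The main delicate point will be the interplay between the parameterized families $(z_\t)_{\t > 0}$ that appear in the statement and the sequences $(z_{t_n})_n$ that drive the definitions of the outer and inner limits in Section~\ref{secPKMconv}. This is bridged on one side by the distance characterization of $\liminf$, and on the other by standard sequence extraction, together with the implicit convention that (ii) is to be read along any sequence $t_n \to 0$. Careful bookkeeping of the degenerate values $f(z) = \pm\infty$ and $\alpha = \pm\infty$ is the only other technical matter and does not affect the overall structure of the argument.
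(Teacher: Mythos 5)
Your argument is correct and is essentially the standard proof of this recalled result; the paper itself gives no proof of Proposition~\ref{propcharact} and simply cites \cite[Proposition~3.19]{Attouch} and \cite[Proposition~7.2]{RockWets}, whose arguments follow exactly your two-inclusion decomposition $\wlimsup \epi(f_\t) \subset \epi(f) \subset \liminf \epi(f_\t)$ with the same translations of each inclusion. The only point that needs an explicit patch is the case $f(z) = -\infty$ in the first equivalence: there $(z,f(z))$ is not a point of $\H \times \R$, so instead of one almost-projection you should apply the distance argument to $(z,-n) \in \epi(f)$ for each $n$ and diagonalize to produce $z_\t \to z$ with $f_\t(z_\t) \to -\infty$; this case is not vacuous in the paper's setting (cf.\ Example~\ref{excontreex}), but the fix is routine and your closing remark already anticipates it.
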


\subsection{Basics of convex analysis}\label{secconvanal}

A set-valued operator $A : \H \rightrightarrows \H$ is said to be \textit{monotone} if
$$ \forall (x_1,y_1), (x_2,y_2) \in \Gr (A), \quad \langle y_2 - y_1 , x_2 - x_1 \rangle \geq 0 . $$
Moreover, $A$ is said to be \textit{maximal monotone} if $\Gr(A) \subset \Gr (B)$ for some monotone set-valued operator $B : \H \rightrightarrows \H$ implies that $A=B$.

\medskip

A set-valued map $A : \H \rightrightarrows \H$ is said to be \textit{single-valued} if $A(x)$ is a singleton for all $x \in \H$. In that case, it holds in particular that $\D (A) = \H$ and we denote by $A : \H \rightarrow \H$ (instead of $A : \H \rightrightarrows \H$). For a single-valued map $A : \H \rightarrow \H$, we say that $A$ is \textit{Lipschitz continuous} if
$$ \exists M \geq 0 , \quad \forall x_1,x_2 \in \H, \quad \Vert A(x_2) - A(x_1) \Vert \leq M \Vert x_2 - x_1 \Vert , $$
and we say that $A$ is \textit{strongly monotone} if
$$ \exists \alpha > 0 , \quad \forall x_1,x_2 \in \H, \quad  \langle A(x_2) - A(x_1) , x_2 - x_1 \rangle \geq \alpha \Vert x_2 - x_1 \Vert^2 . $$
In the whole paper, we denote by $\mathcal{A}(\H)$ the set of all single-valued maps $A : \H \rightarrow \H$ that are Lipschitz continuous and strongly monotone.

\medskip

For an extended-real-valued function $f : \H \to \R \cup \{ +\infty \}$, we say that $f$ is \textit{proper} if $\dom (f) \neq \emptyset$, and that $f$ is \textit{lower semi-continuous} if its epigraph $\epi (f)$ is closed in $\H \times \R$. Finally, recall that $f$ is a convex function if and only if its epigraph $\epi (f)$ is a convex subset of $\H \times \R$. In the whole paper, we denote by $\Gamma_0 (\H)$ the set of all extended-real-valued functions $f : \H \to \R \cup \{ +\infty \}$ that are proper, lower semi-continuous and convex.

\medskip

Let $f \in \Gamma_0 (\H)$. We denote by $\partial f : \H \rightrightarrows \H$ the {\it subdifferential operator} of $f$ defined by
$$ \partial f (x) := \{ y \in \H \mid \forall z \in \H, \; \langle y , z - x \rangle \leq f(z) - f(x) \}, $$
for all $x \in \H$. In particular, note that $x \in \argmin f$ if and only if $0 \in \partial f (x)$. Recall that $\partial f$ is a maximal monotone operator (see \cite{RockPacific} or \cite[Theorem~12.17 p.542]{RockWets} for details). Moreover, it follows from the classical Br\o ndsted-Rockafellar theorem (see, e.g., \cite[Theorem~6.5 p.333]{Penot2}) that $\D (\partial f) \neq \emptyset$, and thus $f$ admits a \textit{supporting hyperplane}, that is,
$$ \exists (z,\xi,\beta) \in \H \times \H \times \R, \quad \left\lbrace 
\begin{array}{l}
\forall w \in \H, \quad f(w) \geq \langle \xi , w \rangle + \beta , \\[5pt]
f(z) = \langle \xi , z \rangle + \beta.
\end{array}
\right. $$
Note that $(z,\xi,\beta) \in \H \times \H \times \R$ is a supporting hyperplane of $f$ if and only if $\xi \in \partial f (z)$ and $\beta = f(z)-\langle \xi , z \rangle$. 

\medskip

The {\it generalized proximity} (also well-known as \textit{proximal}) {\it operator}  associated to some $(A,f) \in \mathcal{A}(\H) \times \Gamma_0 (\H)$ is defined by
$$ 
\prox_{A,f} := (A +\partial f )^{-1}. 
$$
For all $x$, $y \in \H$, one can easily see that $y \in \prox_{A,f} (x)$ if and only if $y$ is a solution of the \textit{nonlinear variational inequality of second kind} given by

\vspace{-0.19cm}\noindent
\begin{minipage}{.16\linewidth}
	\begin{equation*}
	\!\!\! (\mathrm{VI}(A,f,x))
	\end{equation*}
\end{minipage}
\begin{minipage}{.8\linewidth}
	\begin{equation*}
	\forall z \in \H, \quad \langle A(y) , z - y \rangle + f(z) - f(y) \geq \langle x , z - y \rangle.
	\end{equation*}
\end{minipage}\vspace{0.35cm}

From the contraction mapping principle, it can be proved that the above variational inequality~$(\mathrm{VI}(A,f,x))$ admits a unique solution (see, e.g., \cite[Proposition~31 p.140]{Brezisarticle} for details) given by $\prox_{A,f}(x)$. We deduce that $\prox_{A,f} : \H \to \H$ is a single-valued map. Moreover, it can be easily proved that $\prox_{A,f} $ is $\frac{1}{\alpha}$-Lipschitz continuous where $\alpha$ denotes the strong monotonicity coefficient of $A$.

%
%

\subsection{Recalls about Attouch's theorems}\label{secattouch}
We conclude this section with two crucial results stated by H. Attouch in \cite[Theorem~3.66 p.373 and Corollary~3.65 p.372]{Attouch}. The first result is well-known as the classical Attouch's theorem.

\begin{theorem}\label{thmattouch}
	Let $(f_\t)_{\t > 0}$ be a parameterized family of functions in $\Gamma_0(\H)$ and let $f \in \Gamma_0(\H)$. Then, $f = \elim f_\t$ if and only if the two following assertions are satisfied:
	\begin{enumerate}
		\item[{\rm (i)}] $\partial f = \glim \partial f_\t$;
		\item[{\rm (ii)}] there exists $(z_\t, \xi_\t)_{\t > 0} \to (z,\xi)$ such that $(z_\t , \xi_\t) \in \Gr ( \partial f_\t)$ for all $\t > 0$ and $f_\t(z_\t) \to  f(z)$.
	\end{enumerate}
\end{theorem}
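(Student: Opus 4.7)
The plan is to bridge the epi-convergence of the functions and the graphical convergence of the subdifferentials through the Moreau--Yosida approximation. For $\lambda > 0$ and $g \in \Gamma_0(\H)$, recall the Moreau envelope $g^\lambda(x) := \min_{y \in \H}\{g(y)+\tfrac{1}{2\lambda}\|y-x\|^2\}$ and the resolvent $J_\lambda^{\partial g} := (\I + \lambda \partial g)^{-1} = \prox_{\lambda g}$, which is everywhere defined, single-valued and nonexpansive; moreover $g^\lambda$ is convex and $C^{1,1}$ with $\nabla g^\lambda = \tfrac{1}{\lambda}(\I - J_\lambda^{\partial g})$. The proof will rest on two classical tools from the variational convergence machinery: first, Br\'ezis's theorem on graph convergence of maximal monotone operators, which states that $\partial f = \glim \partial f_\t$ is equivalent to the pointwise convergence $J_\lambda^{\partial f_\t}(x) \to J_\lambda^{\partial f}(x)$ on $\H$ for some (equivalently, every) $\lambda > 0$; second, Attouch's envelope characterization of Mosco convergence, which states that $f = \elim f_\t$ is equivalent to the pointwise convergence $f_\t^\lambda(x) \to f^\lambda(x)$ on $\H$ for some (equivalently, every) $\lambda > 0$.

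For the direct implication, assume $f = \elim f_\t$, so that $f_\t^\lambda \to f^\lambda$ pointwise. Since the $f_\t^\lambda$ are convex with uniformly $1/\lambda$-Lipschitz gradients, pointwise convergence upgrades to pointwise convergence of the gradients (a classical lemma on convex functions with equi-Lipschitz gradients), yielding $J_\lambda^{\partial f_\t} \to J_\lambda^{\partial f}$ pointwise and therefore (i). To obtain (ii), pick any $(z, \xi) \in \Gr(\partial f)$; by (i) just established, there exist $(z_\t, \xi_\t) \to (z, \xi)$ with $\xi_\t \in \partial f_\t(z_\t)$. Setting $x_\t := z_\t + \lambda \xi_\t$, the minimizer in the definition of $f_\t^\lambda(x_\t)$ is exactly $z_\t$, giving the key identity
\begin{equation*}
f_\t(z_\t) = f_\t^\lambda(x_\t) - \tfrac{\lambda}{2}\|\xi_\t\|^2.
\end{equation*}
Since $x_\t \to x := z + \lambda \xi$ and the $f_\t^\lambda$ are uniformly locally Lipschitz with pointwise limit the continuous function $f^\lambda$, the right-hand side converges to $f^\lambda(x) - \tfrac{\lambda}{2}\|\xi\|^2 = f(z)$, which establishes (ii).

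For the converse, assume (i) and (ii). By (i) and Br\'ezis's characterization, $\nabla f_\t^\lambda \to \nabla f^\lambda$ pointwise. Integrating along line segments and invoking the uniform $1/\lambda$-Lipschitz bound (dominated convergence) yields $f_\t^\lambda(x) - f_\t^\lambda(x_0) \to f^\lambda(x) - f^\lambda(x_0)$ for every $x, x_0 \in \H$, so it suffices to exhibit a single point where $f_\t^\lambda$ converges to $f^\lambda$. The normalization point of (ii) does exactly this: with $x_\t := z_\t + \lambda \xi_\t \to x := z + \lambda \xi$, the same Moreau identity gives $f_\t^\lambda(x_\t) = f_\t(z_\t) + \tfrac{\lambda}{2}\|\xi_\t\|^2 \to f(z) + \tfrac{\lambda}{2}\|\xi\|^2 = f^\lambda(x)$, so $f_\t^\lambda(x) \to f^\lambda(x)$ pointwise on $\H$, and Attouch's envelope characterization delivers $f = \elim f_\t$. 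The main obstacle is precisely the upgrade between pointwise convergence of the convex $C^{1,1}$ envelopes and pointwise convergence of their gradients (and conversely); the normalization condition (ii) is essential on the converse side, since the subdifferential $\partial f$ determines $f$ only up to an additive constant, and (ii) is what pins this constant to zero.
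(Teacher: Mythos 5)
The paper offers no proof of this statement: it is recalled from Attouch's monograph (Theorem~3.66 there), so there is nothing in-paper to compare against. Your strategy --- routing everything through the Moreau envelopes $f_\t^\lambda$ and the resolvents $J_\lambda^{\partial f_\t}$, combining Br\'ezis's resolvent characterization of graph convergence of maximal monotone operators with Attouch's envelope characterization of Mosco convergence, and using assertion (ii) to pin down the additive constant that $\partial f$ cannot see --- is in substance the classical proof. The converse direction as you write it is sound: the identity $f_\t^\lambda(z_\t+\lambda\xi_\t)=f_\t(z_\t)+\tfrac{\lambda}{2}\Vert\xi_\t\Vert^2$ is correct, the segment-integration step works because $\nabla f_\t^\lambda(x_0)$ converges and the gradients are equi-Lipschitz, and the needed fact $(z,\xi)\in\Gr(\partial f)$ follows from (i) and the closedness of graphical limits.

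There is, however, one genuine gap in the forward direction. The ``classical lemma'' you invoke --- that pointwise convergence of convex functions with equi-Lipschitz gradients upgrades to pointwise convergence of the gradients --- is false in an infinite-dimensional Hilbert space, which is the setting of the paper. Take $\H=\ell^2$ and $g_n:=\langle e_n,\cdot\rangle$ with $(e_n)_n$ the canonical orthonormal basis: these are convex with constant (hence equi-Lipschitz) gradients and converge pointwise to $0$, yet $\nabla g_n=e_n$ does not converge strongly. The conclusion you need at that step, namely the strong convergence $J_\lambda^{\partial f_\t}(x)\to J_\lambda^{\partial f}(x)$, is nevertheless true, but it must be extracted from the Mosco convergence of the $f_\t$ themselves rather than from the pointwise convergence of their envelopes: since $f_\t\to f$ in the Mosco sense, so does $f_\t+\tfrac{1}{2\lambda}\Vert\cdot-x\Vert^2\to f+\tfrac{1}{2\lambda}\Vert\cdot-x\Vert^2$, and these functions are uniformly strongly convex, so their unique minimizers converge strongly and their minimal values converge. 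This yields simultaneously assertion (i) (via Br\'ezis) and the pointwise convergence of the envelopes, after which your argument for (ii) goes through unchanged. In finite dimensions your lemma is correct and no repair is needed.
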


\begin{theorem}\label{thmattouch2}
	Let $(f_\t)_{\t > 0}$ be a parameterized family of functions in $\Gamma_0(\H)$. If $(\partial f_\t)_{\t > 0}$ is G-convergent and if $\glim \partial f_\t$ is a maximal monotone operator, then there exists $f \in \Gamma_0(\H)$ such that $\glim \partial f_\t = \partial f$.
\end{theorem}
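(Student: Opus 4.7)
My plan rests on Rockafellar's classical characterization: a set-valued operator $T : \H \rightrightarrows \H$ coincides with $\partial f$ for some $f \in \Gamma_0(\H)$ if and only if $T$ is \emph{maximal cyclically monotone}. Denoting $T := \glim \partial f_\t$, which is already assumed maximal monotone, I would reduce the problem to showing that $T$ is cyclically monotone. Indeed, any cyclically monotone extension of $T$ is in particular a monotone extension, so the hypothesis of maximal monotonicity automatically upgrades plain cyclic monotonicity to maximal cyclic monotonicity, and the existence of $f$ follows.

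To establish cyclic monotonicity of $T$, I would fix an arbitrary cycle $(z_0,\xi_0), \ldots, (z_n,\xi_n) \in \Gr(T)$ with $(z_n,\xi_n) = (z_0,\xi_0)$, and an arbitrary sequence $\t_k \to 0$. Since $(A_\t)_{\t > 0}$ is G-convergent with limit $T$, one has in particular $\Gr(T) \subset \liminf \Gr(\partial f_{\t})$, so for each index $i \in \{0,\ldots,n-1\}$ there exists a sequence $(z_i^k,\xi_i^k) \in \Gr(\partial f_{\t_k})$ (for $k$ sufficiently large) converging to $(z_i,\xi_i)$ as $k \to \infty$. Closing the discrete cycle by $(z_n^k,\xi_n^k) := (z_0^k,\xi_0^k)$ and invoking the cyclic monotonicity of $\partial f_{\t_k}$ (a standard property of subdifferentials of functions in $\Gamma_0(\H)$) yields
\[
\sum_{i=0}^{n-1} \langle \xi_i^k , z_{i+1}^k - z_i^k \rangle \leq 0
\]
for every sufficiently large $k$. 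A straightforward passage to the limit $k \to \infty$ in this finite sum delivers $\sum_{i=0}^{n-1} \langle \xi_i, z_{i+1} - z_i \rangle \leq 0$, which is the desired cyclic monotonicity inequality for the original cycle.

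Rockafellar's characterization then produces $f \in \Gamma_0(\H)$ with $T = \partial f$, completing the proof. The step I expect to require the most attention is the alignment of the finite family of approximants along a \emph{common} parameter sequence $\t_k$: cyclic monotonicity couples all points of the cycle at the same parameter value, so I need the inner-limit clause of G-convergence to be applied simultaneously at each of the finitely many vertices of the cycle, which it does. Beyond this bookkeeping and the invocation of Rockafellar's maximal cyclic monotonicity theorem, no other obstacle seems substantive; notably, the first Attouch theorem (Theorem~\ref{thmattouch}) is not needed here, since $f$ is recovered directly from the cyclically monotone structure rather than as an epi-limit.
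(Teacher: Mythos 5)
Your argument is correct. Note first that the paper does not prove this statement at all: it is recalled verbatim from Attouch's book (cited as Corollary~3.65/Theorem~3.66 there), so there is no in-paper proof to compare against; what you have written is essentially the classical proof of that corollary. The two ingredients are sound: (a) cyclic monotonicity passes to graphical limits, because the inner-limit clause of Painlev\'e--Kuratowski convergence lets you approximate the finitely many vertices of a cycle simultaneously along one common sequence $\t_k \to 0$ (the point you rightly single out), each $\partial f_{\t_k}$ is cyclically monotone, and the cycle inequality is stable under strong convergence of all the pairs; (b) Rockafellar's characterization then applies, since a cyclically monotone extension is in particular a monotone extension, so maximal monotonicity upgrades cyclic monotonicity to maximal cyclic monotonicity. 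Two small remarks. First, you should note that $\Gr(T) \neq \emptyset$ --- which holds because the empty operator is not maximal monotone on a nonzero Hilbert space --- since otherwise the function produced by Rockafellar's construction would not be proper. Second, step (b) can be shortened: once $T$ is cyclically monotone with nonempty graph, Rockafellar's construction already gives some $f \in \Gamma_0(\H)$ with $T \subset \partial f$, and since $\partial f$ is monotone, the assumed maximality of $T$ forces $T = \partial f$ directly, without invoking the full ``maximal cyclically monotone $\Leftrightarrow$ subdifferential'' equivalence. Neither remark affects the validity of what you wrote.
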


\begin{remark}\label{remfinite}
	Let us consider the framework of Theorem~\ref{thmattouch}. If the equivalent assertions are satisfied, note that $(z,\xi) \in \Gr (\partial f)$ from Remark~\ref{propgraphconv}. In particular note that $f(z) \in \R$.
\end{remark}

\section{Setting, objective and generalized notions of differentiability}\label{sec3}
Before exhibiting in details the setting of the present paper and its main objective, we need to introduce first some notations. 

\medskip

Let $A : \R_+ \times \H \rightrightarrows \H$ be a parameterized set-valued map. For the simplicity of notations, we denote by $A^{-1} : \R_+ \times \H \rightrightarrows \H $ the parameterized set-valued map defined by 
$$A^{-1} (t,y) := (A(t,\cdot))^{-1} (y)$$ 
for all $(t,y) \in \R_+ \times \H$.

\medskip

A parameterized set-valued map $A : \R_+ \times \H \rightrightarrows \H$ is said to be a \textit{parameterized single-valued map} if $A(t,\cdot) : \H \to \H$ is a single-valued map for all $t \geq 0$. In that case we denote by $A : \R_+ \times \H \to \H$ (instead of $A : \R_+ \times \H \rightrightarrows \H$). 

\medskip

Let us introduce:
\begin{itemize}
	\item $\mathcal{A}(\cdot,\H)$ the set of all parameterized single-valued maps $A : \R_+ \times \H \to \H$ such that $A(t,\cdot) \in \mathcal{A}(\H)$ for all $t \geq 0$;
	\item $\Gamma_0(\cdot,\H)$ the set of all parameterized extended-real-valued functions $f : \R_+ \times \H \rightarrow \R \cup \{ +\infty \}$ such that $f(t,\cdot) \in \Gamma_0(\H)$ for all $t \geq 0$.
\end{itemize}
Let $f \in \Gamma_0(\cdot,\H)$. For the ease of notations, we denote by 
$$  \partial f (t,x) := \partial ( f(t,\cdot) ) (x) $$ 
for all $(t,x) \in \R_+ \times \H$. Then $\partial f : \R_+ \times \H \rightrightarrows \H$ is a parameterized set-valued map. Moreover, we introduce the notation
$$ f^{-1} (\cdot , \R) := \{ x \in \H \mid \forall t \geq 0, \; f(t,x) \in \R \} . $$
If $f$ is $t$-independent, note that $f \in \Gamma_0(\H)$ and that $f^{-1}(\cdot,\R)$ coincides with the classical notation $f^{-1}(\R)$.

\subsection{Setting and main objective of the paper}\label{secsetting}

In this paper we focus on the sensitivity analysis, with respect to the parameter $t \geq 0$, of the general nonlinear variational inequality of second kind given by

\vspace{-0.19cm}
\begin{minipage}{.31\linewidth}
	\begin{equation*}
	\!\! \!\!\! \!\!\! ( \mathrm{VI} ( A(t,\cdot),f(t,\cdot),x(t) ) )
	\end{equation*}
\end{minipage}
\begin{minipage}{.68\linewidth}
	\begin{equation*}
	\!\! \forall z \in \H, \;\; \langle A(t,y) , z - y \rangle + f(t,z) - f(t,y) \geq \langle x(t) , z - y \rangle ,
	\end{equation*}
\end{minipage}
\vspace{0.35cm}

%
where $(A,f) \in \mathcal{A}(\cdot,\H) \times \Gamma_0(\cdot,\H)$ and where $x : \R_+ \to \H$ is a given function. From Section~\ref{secconvanal}, the above variational inequality admits for all $t \geq 0$ a unique solution $y(t) \in \H$ given by
$$ y(t) = \prox_{A(t,\cdot),f(t,\cdot)} (x(t)). $$
Our main objective in this paper is to derive sufficient conditions on $A$, $f$ and $x$ under which $y : \R_+ \to \H$ is differentiable at $t=0$ and to provide an explicit formula for $y'(0)$. 

\medskip

In the literature numerous generalized notions of differentiability have been introduced in order to deal with nonsmooth functions. In particular, notions of \textit{semi-differentiability}, \textit{twice epi-differentiability} and \textit{proto-differentiability} have been introduced respectively in \cite{Penotarticle}, \cite{Rockepidiff} and~\cite{Rockproto}. These wonderful tools turned out to be sufficient in order to deal with the case where $A$ and $f$ are $t$-independent. We refer to \cite{Rocksecond,RockWets} for the finite-dimensional case and to \cite{Chi} for the infinite-dimensional case.\footnote{These three references are interested only in the $t$-independent framework and actually they also only consider the case where $A = \I$ is the identity operator. Precisely, they proved that if $f$ is twice epi-differentiable, then the classical Moreau's proximity operator $\prox_{f} = \prox_{\I,f}$ is proto-differentiable. Even if it is not mentioned in \cite{Chi,Rocksecond,RockWets}, one can easily deduce from the proto-differentiability of $\prox_{f}$ that $y$ is differentiable at $t=0$ and provide an explicit formula for $y'(0)$. The results presented in the present paper encompass and extend these results.}

\medskip

In view of dealing with the $t$-dependence of $A$ and $f$, we need to extend the notions of semi-differentiability, twice epi-differentiability and proto-differentiability above mentioned to the $t$-dependent framework. This is the aim of Sections~\ref{secsemidiff}, \ref{secprotodiff} and \ref{sectwiceepidiff}. 

\begin{remark}\normalfont
	We mention here that these generalizations are not trivial and are not sufficient in order to fully adapt the strategy developed in \cite{Chi,Rocksecond,RockWets} to the $t$-dependent framework. Indeed, we will provide a simple example (see Example~\ref{excontreex}) showing that the situation is more complicated in the $t$-dependent setting. As a consequence, we will introduce in Section~\ref{secnewassumption} a new concept called \textit{convergent supporting hyperplane} (see Definition~\ref{defCSH}) that allows us to conclude. Actually, this condition turns out to be necessary and sufficient in a sense that can be made precise (see Proposition~\ref{propderniere}). We refer to Section~\ref{secnewassumption} for a detailed discussion about this issue. Similarly, we will also prove that, in contrary to the $t$-independent framework, the generalized twice epi-differentiability of a function $f \in \Gamma_0(\cdot,\H)$ cannot be directly related to the twice epi-differentiability of its conjugate function~$f^*$. We refer to Section~\ref{secdual} for more details.
\end{remark}

\subsection{Semi-differentiability of a parameterized single-valued map}\label{secsemidiff}

In this section our aim is to generalize the classical notion of semi-differentiability to the case where the single-valued map considered depends on the parameter $t \geq 0$. 

\begin{definition}[Semi-differentiability]\label{defsemidiff}
	Let $A : \R_+ \times \H \to \H$ be a parameterized single-valued map and let $x \in \H$. If the limit
	$$ D_s A(x)(w) := \lim\limits_{\substack{\t \to 0 \\ w' \to w}} \dfrac{A(\t,x+\t w')- A(0,x)}{\t} $$
	exists in $\H$ for all $w \in \H$, we say that $A$ is \textit{semi-differentiable} at $x$. In that case, $D_s A(x) : \H \to \H$ is a single-valued map called the \textit{semi-derivative} of $A$ at $x$.
\end{definition}

If the single-valued map $A$ is $t$-independent, Definition~\ref{defsemidiff} recovers the classical notion of semi-differentiability originally introduced in~\cite{Penotarticle}.

\medskip

In the sequel we will denote by $\mathcal{A}_{\mathrm{unif}}(\cdot,\H)$ the set of all parameterized single-valued maps $A : \R_+ \times \H \to \H$ such that $A$ is \textit{uniformly Lipschitz continuous}, that is,
$$ \exists M \geq 0 , \quad \forall t \geq 0, \quad \forall x_1,x_2 \in \H, \quad \Vert A(t,x_2) - A(t,x_1) \Vert \leq M \Vert x_2 - x_1 \Vert , $$
and \textit{uniformly strongly monotone}, that is,
$$ \exists \alpha > 0 , \quad \forall t \geq 0, \quad \forall x_1,x_2 \in \H, \quad  \langle A(t,x_2) - A(t,x_1) , x_2 - x_1 \rangle \geq \alpha \Vert x_2 - x_1 \Vert^2 .  $$

\begin{proposition}\label{propDA}
	Let $A \in \mathcal{A}_{\mathrm{unif}}(\cdot,\H)$. If $A$ is semi-differentiable at $x \in \H$, then $D_s A(x) \in \mathcal{A}(\H)$.
\end{proposition}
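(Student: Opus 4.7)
\textbf{Plan of proof for Proposition~\ref{propDA}.} The strategy is to read both the Lipschitz and the strong-monotonicity properties of $D_sA(x)$ directly off those of $A(\t,\cdot)$, uniformly in $\t$, by taking the limit $\t\to 0$ inside the estimates. The crucial algebraic observation is that the constant term $A(0,x)$ cancels when one subtracts two second-order difference quotients based at the same point $x$, so that
$$
\frac{A(\t,x+\t w'_2)-A(0,x)}{\t}-\frac{A(\t,x+\t w'_1)-A(0,x)}{\t}=\frac{A(\t,x+\t w'_2)-A(\t,x+\t w'_1)}{\t}.
$$
This identity will allow us to transfer pointwise estimates on $A(\t,\cdot)$ into estimates on the difference quotient, and then pass to the limit via Definition~\ref{defsemidiff}.

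First I would establish Lipschitz continuity of $D_sA(x)$. Fixing $w_1,w_2\in\H$ and taking sequences $\t_n\to 0$ and $w^n_i\to w_i$ for $i=1,2$, the uniform Lipschitz constant $M$ of $A\in\mathcal{A}_{\mathrm{unif}}(\cdot,\H)$ yields
$$
\bigl\|A(\t_n,x+\t_n w^n_2)-A(\t_n,x+\t_n w^n_1)\bigr\|\leq M\t_n\|w^n_2-w^n_1\|.
$$
Dividing by $\t_n$ and using the identity above, the left-hand side equals the norm of the difference of the two difference quotients. By Definition~\ref{defsemidiff}, these converge to $D_sA(x)(w_2)$ and $D_sA(x)(w_1)$ respectively, so passing to the limit gives $\|D_sA(x)(w_2)-D_sA(x)(w_1)\|\leq M\|w_2-w_1\|$. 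This also shows in particular that $D_sA(x)$ is single-valued on all of $\H$, hence a map $\H\to\H$.

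Next I would run the same argument for the uniform strong-monotonicity inequality. With the same sequences, applying the inequality with coefficient $\alpha>0$ to the points $x+\t_n w^n_1$ and $x+\t_n w^n_2$ yields
$$
\bigl\langle A(\t_n,x+\t_n w^n_2)-A(\t_n,x+\t_n w^n_1),\t_n(w^n_2-w^n_1)\bigr\rangle\geq \alpha\t_n^2\|w^n_2-w^n_1\|^2.
$$
Dividing by $\t_n^2$, rewriting the left-hand side via the cancellation identity, and invoking continuity of the inner product together with the definition of the semi-derivative, one obtains $\langle D_sA(x)(w_2)-D_sA(x)(w_1),w_2-w_1\rangle\geq \alpha\|w_2-w_1\|^2$. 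Combined with the previous step, this shows $D_sA(x)\in\mathcal{A}(\H)$.

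The argument is essentially routine once the cancellation identity is noticed; the only mildly delicate point is ensuring that Definition~\ref{defsemidiff}, which is formulated as a joint limit in $(\t,w')$, legitimately produces the convergence of the two difference quotients along the chosen sequences $(\t_n,w^n_i)$. This is immediate from the definition of a limit along $\t\to 0$ and $w'\to w$, so no real obstacle is expected.
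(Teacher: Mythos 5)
Your proposal is correct and follows essentially the same route as the paper: the same cancellation of the $A(0,x)$ term, the same division of the uniform Lipschitz and strong-monotonicity estimates by $\t$ and $\t^2$ respectively, and the same passage to the limit via the definition of the semi-derivative (the paper writes out only the Lipschitz part and notes the monotonicity part is similar, which you have simply made explicit).
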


\begin{proof}
	Let us prove that $D_s A(x)$ is Lipschitz continuous. Let $w_1$, $w_2 \in \H$. It holds that $ \Vert A(\t,x+\t w_2) - A(\t,x+\t w_1) \Vert \leq M \t \Vert w_2 - w_1 \Vert$ and thus
	$$ \left\Vert \dfrac{A(\t,x+\t w_2) - A(0,x)}{\t} - \dfrac{A(\t,x+\t w_1) - A(0,x)}{\t} \right\Vert \leq M \Vert w_2 - w_1 \Vert , $$ 
	for all $\t > 0$. Letting $\t \to 0$ leads to $ \Vert D_s A(x)(w_2)-D_sA(x)(w_1) \Vert \leq M \Vert w_2 - w_1 \Vert$. We conclude that $D_s A(x)$ is Lipschitz continuous. The proof is similar for proving the strong monotonicity of $D_s A(x)$.
\end{proof}

\begin{remark}\label{remunifconstant}\normalfont
	Let $A \in \mathcal{A}_{\mathrm{unif}}(\cdot,\H)$ be semi-differentiable at $x \in \H$. If we denote by $M \geq 0$ and $\alpha > 0$ the uniform Lipschitz and strong monotonicity coefficients of $A$, then $D_s A(x) \in \mathcal{A}(\H)$ is $M$-Lipschitz continuous and $\alpha$-strongly monotone.
\end{remark}

\subsection{Proto-differentiability of a parameterized set-valued map}\label{secprotodiff}

In this section our aim is to generalize the classical notion of proto-differentiability to the case where the set-valued map considered depends on the parameter $t \geq 0$. 

\medskip

Let $A : \R_+ \times \H \rightrightarrows \H$ be a parameterized set-valued map. For all $\t > 0$, $x \in \H$ and $v \in A(0,x)$, we denote by
$$ \fonctionset{\Delta_\t A (x | v)}{\H}{\H}{w}{ \Delta_\t A (x | v)(w) := \dfrac{A (\t,x+\t w) - v}{\t}. }  $$

\begin{definition}[Proto-differentiability]\label{defprotodiff}
	Let $A : \R_+ \times \H \rightrightarrows \H$ be a parameterized set-valued map. We say that $A$ is \textit{proto-differentiable} at $x \in \H$ for $v \in A(0,x)$ if $(\Delta_\t A (x | v))_{\t > 0}$ G-converges. In that case, we denote by
	$$ D_p A(x | v) := \glim \Delta_\t A (x | v) $$
	the set-valued map $D_p A (x | v) : \H \rightrightarrows \H$ called the \textit{proto-derivative} of $A$ at $x$ for $v$.
\end{definition}

If the set-valued map $A$ is $t$-independent, Definition~\ref{defprotodiff} recovers the classical notion of proto-differentiability originally introduced in~\cite{Rockproto}.

\begin{remark}
	Let us point out that if a parameterized single-valued map $A : \R_+ \times \H \to \H$ is semi-differentiable at $x \in \H$, then $A$ is proto-differentiable at $x$ for $A(0,x)$ with $ D_p A(x | A(0,x) ) = D_s A(x) $.
\end{remark}

One can easily prove the two following results. In the $t$-independent case, we recover~\cite[p.331-333]{RockWets}.

\begin{proposition}\label{prop1}
	Let $A : \R_+ \times \H \to \H$ be a parameterized single-valued map and $B : \R_+ \times \H \rightrightarrows \H$ be a parameterized set-valued map. Let $x \in \H$ and $v \in A(0,x) + B(0,x)$. If $A$ is semi-differentiable at $x$, then $A+B$ is proto-differentiable at $x$ for $v$ if and only if $B$ is proto-differentiable at $x$ for $v-A(0,x)$. In that case it holds that
	$$ D_p (A+B)(x | v) = D_s A (x) + D_p B(x|v-A(0,x)). $$
\end{proposition}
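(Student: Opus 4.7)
The plan is to reduce the statement to a very simple identity between the difference quotients. Since $A$ is single-valued and $v = A(0,x)+(v-A(0,x))$, the definition of $\Delta_\t$ immediately gives, for every $\t>0$ and every $w \in \H$,
\begin{equation*}
\Delta_\t(A+B)(x|v)(w) = \Delta_\t A(x|A(0,x))(w) + \Delta_\t B(x|v-A(0,x))(w).
\end{equation*}
Equivalently, $\Gr(\Delta_\t(A+B)(x|v))$ is the image of $\Gr(\Delta_\t B(x|v-A(0,x)))$ under the bijection $(w,\eta) \mapsto (w,\eta+\Delta_\t A(x|A(0,x))(w))$. The semi-differentiability of $A$ at $x$ guarantees that this family of bijections converges jointly as $\t \to 0$ to $(w,\eta) \mapsto (w,\eta+D_sA(x)(w))$, in the sense that $\Delta_{\t_n}A(x|A(0,x))(w_n) \to D_sA(x)(w)$ for every $(\t_n,w_n) \to (0,w)$. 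This ``convergent shift'' is exactly what one needs to transport PK-convergence from one family of graphs to the other.

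For the forward implication, assuming $B$ is proto-differentiable at $x$ for $v-A(0,x)$, I would prove $\PKlim \Gr(\Delta_\t(A+B)(x|v)) = \Gr(D_sA(x)+D_pB(x|v-A(0,x)))$ by chaining the standard inclusions $\liminf \subset \limsup$. For the inner-limit inclusion, given $(w,\xi)$ with $\xi = D_sA(x)(w)+\eta$ and $\eta \in D_pB(x|v-A(0,x))(w)$, and any sequence $\t_n \to 0$, the liminf definition yields $(w_n,\eta_n) \to (w,\eta)$ with $\eta_n \in \Delta_{\t_n}B(x|v-A(0,x))(w_n)$ eventually; then $\xi_n := \eta_n + \Delta_{\t_n}A(x|A(0,x))(w_n) \to \xi$ by semi-differentiability, and the sum identity places $(w_n,\xi_n)$ in $\Gr(\Delta_{\t_n}(A+B)(x|v))$. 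For the outer-limit inclusion, if $(w_n,\xi_n) \to (w,\xi)$ with $\xi_n \in \Delta_{\t_n}(A+B)(x|v)(w_n)$, then $\eta_n := \xi_n - \Delta_{\t_n}A(x|A(0,x))(w_n) \in \Delta_{\t_n}B(x|v-A(0,x))(w_n)$ satisfies $\eta_n \to \xi - D_sA(x)(w)$, so $(w,\xi-D_sA(x)(w)) \in \limsup \Gr(\Delta_\t B(x|v-A(0,x))) \subset \Gr(D_pB(x|v-A(0,x)))$, yielding $\xi \in D_sA(x)(w)+D_pB(x|v-A(0,x))(w)$.

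For the reverse implication, I would simply apply the forward direction to the single-valued map $-A$ (which is also semi-differentiable at $x$, with semi-derivative $-D_sA(x)$) and the set-valued map $A+B$, exploiting $B=(-A)+(A+B)$ and $v-A(0,x)=-A(0,x)+v$; the sum formula for $D_p(A+B)$ then rearranges to that for $D_pB$. I do not expect any serious obstacle: the entire argument is a bookkeeping translation of graphs by a converging shift. The only point worth underlining is that semi-differentiability (rather than mere directional differentiability) of $A$ is essential, since PK-convergence of graphs requires $\Delta_{\t_n}A(x|A(0,x))(w_n) \to D_sA(x)(w)$ along \emph{arbitrary} pairs $(\t_n,w_n)\to(0,w)$, which is exactly what Definition~\ref{defsemidiff} provides.
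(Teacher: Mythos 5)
Your proof is correct and is exactly the argument the paper has in mind: the paper omits the proof entirely (``one can easily prove,'' citing the $t$-independent case in Rockafellar--Wets), and your verification via the pointwise identity $\Delta_\t(A+B)(x|v)=\Delta_\t A(x|A(0,x))+\Delta_\t B(x|v-A(0,x))$, the transport of inner and outer limits through the convergent shift given by semi-differentiability, and the reduction of the converse to the forward direction applied to $-A$ is the standard route. Your closing remark correctly identifies why semi-differentiability (joint convergence in $(\t,w')$) rather than mere directional differentiability is needed.
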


\begin{proposition}\label{prop2}
	Let $A : \R_+ \times \H \rightrightarrows \H$ be a parameterized set-valued map, $x \in \H$ and $v \in A(0,x)$. Then, $A$ is proto-differentiable at $x $ for $v $ if and only if $A^{-1}$ is proto-differentiable at $v$ for $x$. In that case, it holds that
	$$ D_p ( A^{-1} ) (v | x) := ( D_p  A (x|v) )^{-1}. $$
\end{proposition}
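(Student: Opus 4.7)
The plan is to reduce this entirely to the algebraic identity relating the difference quotient of $A$ and that of $A^{-1}$, and then invoke the remark in Section~\ref{secGconv} that says G-convergence is preserved under taking inverses.

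First, I would verify the pointwise graph identity
\begin{equation*}
\Gr\bigl( ( \Delta_\t A(x|v) )^{-1} \bigr) = \Gr\bigl( \Delta_\t A^{-1}(v|x) \bigr)
\qquad \text{for every } \t > 0,
\end{equation*}
which is a direct unwinding of definitions: $(w,u) \in \Gr(\Delta_\t A(x|v))$ iff $u = (a-v)/\t$ for some $a \in A(\t,x+\t w)$, iff $v + \t u \in A(\t,x+\t w)$, iff $x+\t w \in A^{-1}(\t,v+\t u)$, iff $w \in \Delta_\t A^{-1}(v|x)(u)$, iff $(u,w) \in \Gr(\Delta_\t A^{-1}(v|x))$. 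Equivalently, $( \Delta_\t A(x|v) )^{-1} = \Delta_\t A^{-1}(v|x)$ as set-valued maps on $\H$. Note that $x \in A^{-1}(0,v)$ since $v \in A(0,x)$, so the difference quotient $\Delta_\t A^{-1}(v|x)$ is well-defined according to Definition~\ref{defprotodiff}.

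Next, I would apply the remark in Section~\ref{secGconv} to the parameterized family $(A_\t)_{\t>0} := (\Delta_\t A(x|v))_{\t>0}$. That remark gives: $(\Delta_\t A(x|v))_{\t>0}$ is G-convergent if and only if $((\Delta_\t A(x|v))^{-1})_{\t>0}$ is G-convergent, and in that case the two G-limits are inverses of one another. Combining this with the identity from the first step, we conclude that $(\Delta_\t A(x|v))_{\t>0}$ is G-convergent if and only if $(\Delta_\t A^{-1}(v|x))_{\t>0}$ is G-convergent, which is exactly the statement that $A$ is proto-differentiable at $x$ for $v$ if and only if $A^{-1}$ is proto-differentiable at $v$ for $x$.

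Finally, the limit formula is immediate: when either side holds,
\begin{equation*}
D_p(A^{-1})(v|x) = \glim \Delta_\t A^{-1}(v|x) = \glim \bigl( \Delta_\t A(x|v) \bigr)^{-1} = \bigl( \glim \Delta_\t A(x|v) \bigr)^{-1} = \bigl( D_p A(x|v) \bigr)^{-1}.
\end{equation*}
There is no real obstacle in this proof: the only substantive content is the algebraic identity, which is a matter of reading off the definitions of $\Delta_\t A$ and of the inverse of a set-valued map. Everything else is imported from the already-stated remark on inverses of G-convergent families.
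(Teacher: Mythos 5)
Your proof is correct and is precisely the argument the paper has in mind (the paper omits it, stating only that the result is easily proved): the identity $(\Delta_\t A(x|v))^{-1} = \Delta_\t A^{-1}(v|x)$ combined with the remark of Section~\ref{secGconv} on inverses of G-convergent families. Your verification of the graph identity and the well-definedness of $\Delta_\t A^{-1}(v|x)$ are both accurate, so there is nothing to add.
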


\subsection{Twice epi-differentiability of parameterized extended-real-valued functions}\label{sectwiceepidiff}
In this section our aim is to generalize the classical notion of twice epi-differentiability to the case where the extended-real-valued function considered depends on the parameter $t \geq 0$. 

\medskip

Let $f  \in \Gamma_0(\cdot,\H)$. For all $\t > 0$, $x \in f^{-1} (\cdot , \R)$ and $v \in \partial f(0,x)$, we denote by
\begin{equation}\label{form}
\fonction{\Delta^2_\t f (x | v)}{\H}{\R \cup \{ +\infty \} }{w}{ \Delta^2_\t f (x | v)(w) := \dfrac{f (\t,x+\t w) - f(\t,x) - \t \langle v , w \rangle}{ \t^2 }. }
\end{equation}

\begin{definition}[Twice epi-differentiability]\label{deftwiceepidiff}
	Let $f  \in \Gamma_0(\cdot,\H)$. We say that $f$ is \textit{twice epi-differentiable} at $x \in f^{-1} (\cdot , \R)$ for $v \in \partial f(0,x)$ if $(\Delta^2_\t f (x | v))_{\t > 0}$ M-converges. In that case, we denote by
	$$ d^2_e f (x | v) := \elim \Delta^2_\t f (x | v) $$
	the extended-real-valued function $d^2_e f (x | v) : \H \to \overline{\R}$ called the \textit{second epi-derivative} of~$f$ at $x$ for~$v$.
\end{definition}

If the extended-real-valued function $f$ is $t$-independent, Definition~\ref{deftwiceepidiff} recovers the classical notion of twice epi-differentiability originally introduced in~\cite{Rockepidiff} (up to the multiplicative constant $\frac{1}{2}$).

\begin{remark}\normalfont
	Let $f  \in \Gamma_0(\cdot,\H)$ be twice epi-differentiable at $x \in f^{-1} (\cdot , \R)$ for $v \in \partial f(0,x)$. Even if $\Delta^2_\t f(x|v)$ is with values in $\R \cup \{ +\infty \}$ for all $\t > 0$, it may be possible that there exists $w \in \H$ such that $d^2_e f (x | v)(w) = -\infty$. We refer to Example~\ref{excontreex}. This is an important difference with the $t$-independent framework. We refer to Section~\ref{secnewassumption} for a detailed discussion about this issue.
\end{remark}

\begin{remark}\label{rem65}\normalfont
	Let $f  \in \Gamma_0(\cdot,\H)$ be twice epi-differentiable at $x \in f^{-1} (\cdot , \R)$ for $v \in \partial f(0,x)$. It might be possible that $d^2_e f (x | v)$ is not positively homogeneous of degree two. We refer to Example~\ref{excontreex2}. Indeed, note that $\Delta^2_{ \t} f (x | v) (\lambda w) \neq \lambda^2 \Delta^2_{\lambda \t} f (x | v) ( w)$ in general for $\lambda > 0$ and $w \in \H$. This is a second important difference with the $t$-independent framework.
\end{remark}

\begin{remark}\label{rempointwise}\normalfont
	Let $f : \R_+ \times \H \rightarrow \R$ be a function of class $C^2$ on $\R_+ \times \H$. A Taylor expansion of $f$ around $x \in \H$ with $v = \nabla_x f (0,x)$ implies that $(\Delta^2_\t f (x | v) (w))_{\t > 0}$ converges pointwise to
	$$ \dfrac{1}{2} \left\langle \nabla^2_{xx} f (0,x)(w), w \right\rangle + \left\langle \nabla^2_{tx} f(0,x), w \right\rangle  $$
	for all $w \in \H$. This remark is coherent with Remark~\ref{rem65} since the above expression is not positively homogeneous of degree two with respect to the variable $w$. 
\end{remark}

\begin{remark}\normalfont
	Note that Section~\ref{secform} will be devoted to a discussion about the pointwise convergence mentioned in Remark~\ref{rempointwise} and about the choice of  Formula~\eqref{form}.
\end{remark}

\section{Main results}\label{secmainresults}

The whole paper is based on Attouch's theorems (see Theorems~\ref{thmattouch} and \ref{thmattouch2}) and on Proposition~\ref{prop12} below. Its proof is simple and similar to the $t$-independent case (see, e.g., \cite[Proposition~2.7]{Rock122}).

\begin{proposition}\label{prop12}
	Let $f  \in \Gamma_0(\cdot,\H)$, $x \in f^{-1} (\cdot , \R)$ and $v \in \partial f(0,x)$. Then, $\Delta^2_\t f (x | v) $ belongs to $\Gamma_0(\H)$ with
	$$ \partial \Big( \Delta^2_\t f (x | v) \Big) = \Delta_\t (\partial f) (x | v), $$
	for all $\t > 0$.
\end{proposition}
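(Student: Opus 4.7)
The plan is to verify the claim by direct computation: for fixed $\tau > 0$, the function $\phi := \Delta^2_\tau f(x|v)$ is obtained from $f(\tau,\cdot) \in \Gamma_0(\H)$ by an affine change of variable followed by subtracting an affine term and rescaling by the positive constant $\tau^{-2}$. I would therefore first check the three defining properties of $\Gamma_0(\H)$. Convexity and lower semi-continuity of $\phi$ are preserved under these operations since $w \mapsto x+\tau w$ is a continuous affine bijection of $\H$ onto itself, the linear term $w \mapsto \tau\langle v,w\rangle$ is continuous and affine, and $\tau^{-2} > 0$. For properness, the assumption $x \in f^{-1}(\cdot,\R)$ guarantees $f(\tau,x) \in \R$, hence $\phi(0) = 0 \in \R$, so $\dom(\phi) \ni 0$.

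The subdifferential identity is a change-of-variable computation. I would start from the definition: $\xi \in \partial \phi(w)$ means
\[
\phi(w') - \phi(w) \geq \langle \xi, w'-w\rangle \qquad \text{for all } w' \in \H.
\]
Multiplying by $\tau^2 > 0$ and using the explicit form of $\phi$, this inequality is equivalent to
\[
f(\tau, x+\tau w') - f(\tau, x+\tau w) \geq \langle v + \tau\xi,\, \tau(w'-w)\rangle.
\]
Setting $y := x+\tau w$ and letting $z := x+\tau w'$ range over $\H$ (which it does since $\tau > 0$), this is precisely the characterization $v+\tau\xi \in \partial f(\tau,\cdot)(x+\tau w) = \partial f(\tau, x+\tau w)$. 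Rearranging yields $\xi \in \tau^{-1}\bigl(\partial f(\tau, x+\tau w) - v\bigr) = \Delta_\tau(\partial f)(x|v)(w)$, and the reverse implication is obtained by reading the same chain of equivalences backwards.

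There is no real obstacle here: the statement is essentially the compatibility of the Fenchel subdifferential with an affine change of variable and a constant linear shift, together with the obvious stability of $\Gamma_0(\H)$ under these operations. The only point requiring care is to ensure properness via $f(\tau,x) \in \R$, which is exactly the content of the hypothesis $x \in f^{-1}(\cdot,\R)$; without it, one could only assert $\phi$ takes values in $\R\cup\{\pm\infty\}$. This also explains why the definition of $f^{-1}(\cdot,\R)$ was introduced before the second-order difference quotient.
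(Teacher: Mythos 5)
Your proof is correct and is exactly the direct computation the paper has in mind: the paper omits the argument, remarking only that it is "simple and similar to the $t$-independent case," and your verification of convexity, lower semi-continuity, properness via $f(\tau,x)\in\R$, and the change-of-variable identity for the subdifferential is that standard argument carried out in full.
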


One can easily deduce that the twice epi-differentiability of a function $f \in \Gamma_0(\cdot,\H)$ is strongly related to the proto-differentiability of its subdifferential operator $\partial f$. 

\medskip

This relation in the $t$-independent framework is simple and recalled in Section~\ref{secnewassumption} (see Proposition~\ref{proptinde}). However, a simple example (see Example~\ref{excontreex}) shows that the situation is more complicated in the $t$-dependent setting. This is the reason why we introduce a new concept called \textit{convergent supporting hyperplane} (see Definition~\ref{defCSH}) that allows us to state and prove the counterpart of Proposition~\ref{proptinde} in the $t$-dependent framework (see Theorem~\ref{thmmain1}). This condition actually turns out to be necessary and sufficient in a sense that can be made precise (see Proposition~\ref{propderniere}).

\medskip

Finally Section~\ref{secmain2} is devoted to the proto-differentiability of the generalized proximity operator (see Theorem~\ref{thmmain2}) and to our initial motivation, that is, to the sensitivity analysis of general nonlinear variational inequalities of second kind (see Theorem~\ref{thmmain3}).

\subsection{Convergent supporting hyperplane}\label{secnewassumption}

In the $t$-independent framework, the following proposition relating the twice epi-differentiability of a function $f \in \Gamma_0(\H)$ and the proto-differentiability of its subdifferential operator $\partial f$ is a well-known result. We refer to \cite{Rocksecond,RockWets} for the finite-dimensional case and to \cite{Chi} for the infinite-dimensional one. 

\begin{proposition}\label{proptinde}
	Let $f \in  \Gamma_0(\H)$ (that is $t$-independent), $x \in f^{-1}(\R)$ and $v \in \partial f(x)$. The following assertions are equivalent:
	\begin{enumerate}
		\item[{\rm (i)}] $f$ is twice epi-differentiable at $x$ for $v$;
		\item[{\rm (ii)}] $\partial f$ is proto-differentiable at $x$ for $v$ and $D_p (\partial f) (x | v)$ is a maximal monotone operator.
	\end{enumerate}
	In that case $d^2_e f(x|v) $ belongs to $\Gamma_0(\H)$ with $d^2_e f (x | v)(0) = 0$ and 
	$$ D_p (\partial f)(x|v) = \partial ( d^2_e f(x|v) ).$$
\end{proposition}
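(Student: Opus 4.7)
The plan is to reduce everything to Attouch's theorems (Theorems~\ref{thmattouch} and~\ref{thmattouch2}) by means of Proposition~\ref{prop12}, which identifies $\Delta^2_\t f(x|v) \in \Gamma_0(\H)$ with subdifferential $\Delta_\t(\partial f)(x|v)$. The whole question then becomes: when does the family of convex functions $(\Delta^2_\t f(x|v))_{\t>0}$ M-converge, and what is its epi-limit in terms of the graphical limit of its subdifferentials?

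The key preliminary observation, which is special to the $t$-independent case, is that $v \in \partial f(x)$ gives, via the subgradient inequality, $\Delta^2_\t f(x|v) \geq 0$ for every $\t > 0$, together with $\Delta^2_\t f(x|v)(0) = 0$. Hence $0$ minimizes $\Delta^2_\t f(x|v)$, so $0 \in \partial(\Delta^2_\t f(x|v))(0) = \Delta_\t(\partial f)(x|v)(0)$, i.e. $(0,0) \in \Gr(\Delta_\t (\partial f)(x|v))$ for all $\t>0$.

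For the implication (i)$\Rightarrow$(ii), I would first upgrade M-convergence of $\Delta^2_\t f(x|v)$ to the fact that its limit lies in $\Gamma_0(\H)$: taking the constant recovery sequence at $w=0$ in Proposition~\ref{propcharact} gives $d^2_e f(x|v)(0) \leq 0$, while the weak-liminf characterization combined with $\Delta^2_\t f(x|v) \geq 0$ gives $d^2_e f(x|v) \geq 0$, whence $d^2_e f(x|v)(0) = 0$ and $d^2_e f(x|v)$ is proper. Applying the ``only if'' direction of Theorem~\ref{thmattouch} to the family $(\Delta^2_\t f(x|v))$ and using Proposition~\ref{prop12} then yields $D_p(\partial f)(x|v) = \glim \partial(\Delta^2_\t f(x|v)) = \partial(d^2_e f(x|v))$, which is automatically maximal monotone.

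For the converse (ii)$\Rightarrow$(i), Theorem~\ref{thmattouch2} applied to $(\Delta^2_\t f(x|v))$ produces some $g \in \Gamma_0(\H)$ with $\partial g = D_p(\partial f)(x|v)$, determined up to an additive constant. Since $(0,0) \in \Gr(\Delta_\t(\partial f)(x|v))$ for all $\t>0$ and graphs PK-converge, Remark~\ref{propgraphconv} gives $(0,0) \in \Gr(\partial g)$, so $0 \in \dom(g)$ and $g(0) \in \R$; normalize so that $g(0)=0$. Now apply the ``if'' direction of Theorem~\ref{thmattouch}: condition~(i) holds by assumption combined with Proposition~\ref{prop12}, and condition~(ii) holds via the constant sequence $(z_\t,\xi_\t) = (0,0)$ because $\Delta^2_\t f(x|v)(0) = 0 = g(0)$. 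This forces $g = \elim \Delta^2_\t f(x|v)$, so $f$ is twice epi-differentiable with $d^2_e f(x|v) = g$, and the equality $D_p(\partial f)(x|v) = \partial(d^2_e f(x|v))$ follows.

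The step that is trivial here but will be the main obstacle in the $t$-dependent generalization is the availability of the anchor point $(0,0)$ with $\Delta^2_\t f(x|v)(0) = 0$ for \emph{all} $\t$: this is exactly what supplies both the properness of $d^2_e f(x|v)$ in (i)$\Rightarrow$(ii) and the second condition of Attouch's theorem in (ii)$\Rightarrow$(i). In the $t$-dependent setting the quantity $\Delta^2_\t f(x|v)(0) = (f(\t,x)-f(0,x))/\t^2$ no longer vanishes, which is precisely the gap that the convergent supporting hyperplane condition of Definition~\ref{defCSH} is designed to bridge.
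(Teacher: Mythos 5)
Your proposal is correct and follows essentially the same route as the paper: the anchor $\Delta^2_\t f(x|v)(0)=0$ together with $\Delta^2_\t f(x|v)\geq 0$ (the content of Lemma~\ref{lemtinde}), Attouch's Theorem~\ref{thmattouch} for (i)$\Rightarrow$(ii) after establishing $d^2_e f(x|v)\in\Gamma_0(\H)$, and Theorem~\ref{thmattouch2} plus the constant pair $(0,0)\in\Gr(\partial(\Delta^2_\t f(x|v)))$ and an additive normalization for (ii)$\Rightarrow$(i); your closing remark about why this anchor disappears in the $t$-dependent case is exactly the paper's motivation for Definition~\ref{defCSH}. One small label swap in your use of Proposition~\ref{propcharact}: it is the liminf condition applied to the constant sequence $z_\t=0$ that yields $d^2_e f(x|v)(0)\leq 0$, and it is the existence of a recovery sequence (the limsup condition) combined with $\Delta^2_\t f(x|v)\geq 0$ that yields $d^2_e f(x|v)\geq 0$ everywhere --- the conclusions you draw are the right ones, but the two halves of the characterization are attributed in reverse.
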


This classical result is based on Attouch's theorems (see Theorems~\ref{thmattouch} and \ref{thmattouch2}) and on Proposition~\ref{prop12}. However it is worth to note here that Proposition~\ref{proptinde} is also based on the following lemma. 

\begin{lemma}\label{lemtinde}
	Let $f \in  \Gamma_0(\H)$ (that is $t$-independent), $x \in f^{-1}(\R)$ and $v \in \partial f(x)$. Then, $\Delta^2_\t f(x|v)(0) = 0$ and $\Delta^2_\t f(x|v)(w) \geq 0$ for all $w \in \H$ and all $\t > 0$. In particular, it holds that $0 \in \partial ( \Delta^2_\t f (x|v) ) (0)$ for all $\t > 0$.
\end{lemma}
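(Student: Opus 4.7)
My plan is to unfold the definition of $\Delta^2_\t f(x|v)$ in the $t$-independent setting and then rely directly on the subgradient inequality defining $v \in \partial f(x)$.

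Since $f$ is $t$-independent, for every $\t > 0$ the formula \eqref{form} reduces to
$$ \Delta^2_\t f (x | v)(w) = \dfrac{f(x+\t w) - f(x) - \t \langle v , w \rangle}{\t^2}. $$
Plugging in $w = 0$ gives a numerator equal to zero, hence $\Delta^2_\t f(x|v)(0) = 0$, which handles the first assertion.

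For an arbitrary $w \in \H$, I would invoke the defining inequality of $v \in \partial f(x)$, namely $f(z) \geq f(x) + \langle v , z - x \rangle$ for all $z \in \H$, and apply it to $z = x + \t w$. This yields $f(x+\t w) - f(x) - \t \langle v , w \rangle \geq 0$, and dividing by $\t^2 > 0$ gives the claimed nonnegativity $\Delta^2_\t f(x|v)(w) \geq 0$.

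Combining the two steps, $0$ is a global minimizer of the function $\Delta^2_\t f(x|v)$ on $\H$. By Proposition~\ref{prop12}, $\Delta^2_\t f(x|v) \in \Gamma_0(\H)$, and for a proper lower semi-continuous convex function the global minimizers are characterized by the inclusion $0 \in \partial g(\cdot)$. Applying this characterization at the minimizer $0$ yields $0 \in \partial(\Delta^2_\t f(x|v))(0)$, which is the final claim. There is no real obstacle here; the lemma is essentially an immediate rewording of the subgradient inequality at scale $\t$, and its purpose in the sequel is to guarantee the properness of the second-order difference quotient at $0$, a property whose failure in the $t$-dependent setting motivates the new notion of convergent supporting hyperplane introduced afterwards.
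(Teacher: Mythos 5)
Your proof is correct and is exactly the argument the paper implicitly relies on (the lemma is stated without proof there): the identity at $w=0$ is immediate, the nonnegativity is the subgradient inequality at $z=x+\t w$ divided by $\t^2$, and the subdifferential inclusion follows since $0$ is a global minimizer. One could even skip the appeal to Proposition~\ref{prop12}, since $0$ being a global minimizer with value $0$ gives $\langle 0 , w - 0 \rangle \leq \Delta^2_\t f(x|v)(w) - \Delta^2_\t f(x|v)(0)$ for all $w$, which is the definition of $0 \in \partial ( \Delta^2_\t f (x|v) ) (0)$ directly.
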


Let us recall the proof of Proposition~\ref{proptinde} in order to highlight the crucial role of Lemma~\ref{lemtinde}.

\begin{proof}[Proof of Proposition~\ref{proptinde}]
	Let us assume that $f$ is twice epi-differentiable at $x$ for $v$. Our first aim is to prove that $d^2_e f(x|v) \in \Gamma_0(\H)$. Firstly, let us assume by contradiction that there exists $w \in \H$ such that $d^2_e f (x | v)(w)=-\infty$. Since $(w,-1) \in \epi (  d^2_e f (x | v) ) = \limsup \epi ( \Delta^2_\t f (x | v) )$, there exist $(t_n) \to 0$ and $(w_n,\lambda_n) \to (w,-1)$ such that $(w_n,\lambda_n) \in \epi ( \Delta^2_{t_n} f (x | v) )$, that is, $ \Delta^2_{t_n} f (x | v) (w_n) \leq \lambda_n$ for all $n \in \N$. For sufficiently large $n$, it holds that $ \Delta^2_{t_n} f (x | v) (w_n) \leq \lambda_n < 0$ which raises a contradiction with Lemma~\ref{lemtinde}. Secondly, since $\epi (  d^2_e f (x | v) ) = \liminf \epi ( \Delta^2_\t f (x | v) )$, we easily conclude that $d^2_e f (x | v)$ is lower semi-continuous and convex (see Remarks~\ref{rkouterinnerlimit} and \ref{rkouterinnerlimit2} and Proposition~\ref{prop12}). Thirdly, since $(0,0) \in \epi ( \Delta^2_\t f (x | v) )$ for all $\t > 0$, we deduce that $(0,0) \in \epi ( d^2_e f (x | v) )$ and thus $d^2_e f (x | v) (0) \leq 0$. This concludes that $d^2_e f(x|v)$ is proper. Our second aim is to prove that $d^2_e f (x | v)(0) = 0$. By contradiction, let us assume that $d^2_e f (x | v)(0) < 0$. Then, there exists $\varepsilon > 0$ such that $(0,-\varepsilon) \in \epi ( d^2_e f (x | v) ) = \limsup \epi ( \Delta^2_\t f (x | v) )$. Thus, there exist $(t_n) \to 0$ and $(w_n,\varepsilon_n) \to (0,\varepsilon)$ such that $(w_n,-\varepsilon_n) \in \epi ( \Delta^2_{t_n} f (x | v) )$, that is, $ \Delta^2_{t_n} f (x | v) (w_n) \leq - \varepsilon_n$ for all $n \in \N$. For sufficiently large $n$, it holds that $ \Delta^2_{t_n} f (x | v) (w_n) \leq - \varepsilon_n < 0$ which constitutes a contradiction with Lemma~\ref{lemtinde}. Our last aim is to prove that $\partial f$ is proto-differentiable at $x$ for $v$ with $ D_p (\partial f)(x|v) = \partial ( d^2_e f(x|v) )$. This result directly follows from Proposition~\ref{prop12} and Theorem~\ref{thmattouch}.
	
	\medskip
	
	Now let us assume that $\partial f$ is proto-differentiable at $x$ for $v$ and $D_p (\partial f) (x | v)$ is a maximal monotone operator. From Proposition~\ref{prop12}, we get that $( \partial ( \Delta^2_\t f (x | v) ) )_{\t > 0} = ( \Delta_\t (\partial f) (x | v) )_{\t > 0}$ G-converges to the maximal monotone operator $ D_p (\partial f)(x|v)$. We deduce from Theorem~\ref{thmattouch2} that $ D_p (\partial f)(x|v) = \partial \varphi$ for some $\varphi \in \Gamma_0(\H)$. Our aim is now to apply Theorem~\ref{thmattouch} in order to conclude that $( \Delta^2_\t f (x | v) ) _{\t > 0}$ M-converges. Let us consider $z_\t := 0$ and $\xi_\t := 0$ for all $\t > 0$. In particular, we have $(z_\t,\xi_\t) \to (0,0)$ and since $0 \in \partial ( \Delta^2_\t f (x | v) ) (0)$ (see Lemma~\ref{lemtinde}), we deduce that $(z_\t,\xi_\t) \in \Gr ( \partial ( \Delta^2_\t f (x | v) ) )$ for all $\t > 0$. From Remark~\ref{propgraphconv}, we deduce that $(0,0) \in \Gr (\partial \varphi)$ and thus $0 \in \dom (\varphi)$. Considering $\psi := \varphi - \varphi (0) \in \Gamma_0(\H)$, we have $\psi (0) = 0$ and $\partial \psi = \partial \varphi$. Since $\Delta^2_\t f(x|v)(\xi_\t) = 0 \to \psi (0)$, the proof is complete from Theorem~\ref{thmattouch}.
\end{proof}

However it should be noted that Lemma~\ref{lemtinde} is not true in the general $t$-dependent setting. As a consequence the above proof cannot be fully adapted to the $t$-dependent framework. Actually, in the $t$-dependent setting, we can even prove that the above proposition is not true with a simple counter-example provided below. 

%

\begin{example}\label{excontreex}\normalfont
	Let $\H = \R$ and $f(t,x) := \vert x - t \vert$ for all $(t,x) \in \R_+ \times \R$. Let us consider $x =0$ and $v = 0 \in \partial f (0,x)$. One can easily compute that
	$$ \Delta^2_\t f(x|v) (w) = \dfrac{ \vert w - 1 \vert - 1 }{\t}, $$
	for all $\t > 0$ and all $w \in \R$. One can easily deduce that $f$ is twice epi-differentiable at $x$ for $v$ with 
	$$ d^2_e f(x|v)(w) = \left\lbrace 
	\begin{array}{lcr}
	-\infty & \text{if} & w \in [0,2], \\
	+\infty & \text{if} & w \notin [0,2].
	\end{array}
	\right. $$
	In particular note that $d^2_e f(x|v) \notin \Gamma_0(\H)$ and that $d^2_e f(x|v)(0) \neq 0$.
\end{example} 

We deduce from Example~\ref{excontreex} that Proposition~\ref{proptinde} does not admit an exact counterpart in the $t$-dependent framework. Thus our aim is now to introduce a new concept that allows to recover Proposition~\ref{proptinde} in the $t$-dependent framework. We refer to Section~\ref{secconvanal} for the notion of \textit{supporting hyperplane}.

\begin{definition}[Convergent supporting hyperplane]\label{defCSH}
	Let $(f_\t)_{\t > 0}$ be a parameterized family of functions in $\Gamma_0(\H)$. We say that $(f_\t)_{\t > 0}$ admits a \textit{convergent supporting hyperplane} if there exists $(z_\t,\xi_\t,\beta_\t)_{\t > 0} \to (z,\xi,\beta)$ such that $(z_\t,\xi_\t,\beta_\t)$ is a supporting hyperplane of $f_\t$ for all $\t > 0$.
\end{definition}

\begin{remark}\label{remCSH}
	Let us point out that the existence of a convergent supporting hyperplane to a parameterized family $(f_\t)_{\t > 0}$ of functions in $\Gamma_0(\H)$ is equivalent to the existence of $(z_\t,\xi_\t)_{\t > 0} \to (z,\xi)$ such that $(z_\tau,\xi_\tau) \in \mathrm{Gr}(\partial f_\tau)$ for all $\tau > 0$ and such that $f_\tau(z_\tau) \to \gamma$ for some $\gamma \in \R$.
\end{remark}

We are now in position in order to state and prove the counterpart of Proposition~\ref{proptinde} in the $t$-dependent framework, under the assumption of the existence of a convergent supporting hyperplane. The following theorem is in this sense.

\begin{theorem}\label{thmmain1}
	Let $f \in  \Gamma_0(\cdot,\H)$, $x \in f^{-1}(\cdot,\R)$ and $v \in \partial f(0,x)$. Let us assume that $(\Delta^2_\t f(x | v))_{\t > 0}$ admits a convergent supporting hyperplane $(z_\t,\xi_\t,\beta_\t)_{\t > 0} \to (z,\xi,\beta)$. Then the following assertions are equivalent:
	\begin{enumerate}
		\item[{\rm (i)}] $f$ is twice epi-differentiable at $x$ for $v$;
		\item[{\rm (ii)}] $\partial f$ is proto-differentiable at $x$ for $v$ and $D_p (\partial f) (x | v)$ is a maximal monotone operator.
	\end{enumerate}
	In that case $d^2_e f(x|v) $ belongs to $\Gamma_0(\H)$ with $\beta \leq d^2_e f (x | v)(0) \leq 0$ and 
	$$ D_p (\partial f)(x|v) = \partial ( d^2_e f(x|v) ).$$
\end{theorem}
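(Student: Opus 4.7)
The plan is to mimic the proof of Proposition~\ref{proptinde}, substituting at each use of Lemma~\ref{lemtinde} a consequence of the convergent supporting hyperplane hypothesis. The key technical tool is the following: passing to the limit in the supporting hyperplane inequality $\Delta^2_\t f(x|v)(w') \geq \langle \xi_\t, w' \rangle + \beta_\t$ (valid for every $w' \in \H$ and every $\t > 0$), through the outer-limit description of $\epi(d^2_e f(x|v))$, would yield the affine lower bound
$$ d^2_e f(x|v)(w) \geq \langle \xi, w \rangle + \beta $$
on the candidate second epi-derivative, whenever the M-limit exists. This single inequality plays the role of the pointwise non-negativity $\Delta^2_\t f(x|v) \geq 0$ available in the $t$-independent setting.

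For the implication (i)$\Rightarrow$(ii), I would first check that $d^2_e f(x|v) \in \Gamma_0(\H)$. Properness follows at once from the affine minorant above, which rules out the value $-\infty$. Since by Proposition~\ref{prop12} each $\Delta^2_\t f(x|v)$ belongs to $\Gamma_0(\H)$ and $\epi(d^2_e f(x|v))$ is an inner limit, convexity and lower semi-continuity are immediate from Remarks~\ref{rkouterinnerlimit} and \ref{rkouterinnerlimit2}. The identity $\Delta^2_\t f(x|v)(0) = 0$ (still valid in the $t$-dependent framework by direct computation) gives $(0,0) \in \liminf \epi(\Delta^2_\t f(x|v)) = \epi(d^2_e f(x|v))$, whence $d^2_e f(x|v)(0) \leq 0$; the affine minorant evaluated at $w=0$ delivers $d^2_e f(x|v)(0) \geq \beta$. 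With $d^2_e f(x|v) \in \Gamma_0(\H)$ secured, Attouch's theorem (Theorem~\ref{thmattouch}) combined with Proposition~\ref{prop12} identifies $\partial(d^2_e f(x|v))$ with $\glim \Delta_\t(\partial f)(x|v) = D_p(\partial f)(x|v)$, and grants the maximal monotonicity of the latter as a subdifferential operator.

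For the converse implication (ii)$\Rightarrow$(i), Theorem~\ref{thmattouch2} applied to the G-convergent family $(\partial(\Delta^2_\t f(x|v)))_{\t > 0} = (\Delta_\t(\partial f)(x|v))_{\t > 0}$ produces some $\varphi \in \Gamma_0(\H)$ with $\partial\varphi = D_p(\partial f)(x|v)$. To deduce via Theorem~\ref{thmattouch} that $(\Delta^2_\t f(x|v))_{\t>0}$ M-converges, I need a converging sequence of graph points on which the functional values converge to the value of the prospective limit at the base point. Remark~\ref{remCSH} repackages the convergent supporting hyperplane as exactly such a datum: $(z_\t,\xi_\t) \to (z,\xi)$ with $(z_\t,\xi_\t) \in \Gr(\partial(\Delta^2_\t f(x|v)))$ and $\Delta^2_\t f(x|v)(z_\t) \to \gamma$ for some $\gamma \in \R$. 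By Remark~\ref{propgraphconv}, $(z,\xi) \in \Gr(\partial \varphi)$, hence $\varphi(z) \in \R$, and shifting $\varphi$ by the constant $\gamma - \varphi(z)$ produces $\psi := \varphi + (\gamma - \varphi(z)) \in \Gamma_0(\H)$ with $\psi(z) = \gamma$ and $\partial \psi = \partial \varphi$. Theorem~\ref{thmattouch} then applies and yields $\psi = \elim \Delta^2_\t f(x|v)$, so that $d^2_e f(x|v) = \psi \in \Gamma_0(\H)$ and $\partial(d^2_e f(x|v)) = D_p(\partial f)(x|v)$.

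The main obstacle is the forward implication, specifically ruling out $-\infty$ for $d^2_e f(x|v)$ and establishing the lower bound at zero. In the $t$-independent setting these came for free from $\Delta^2_\t f(x|v) \geq 0$ and $\Delta^2_\t f(x|v)(0) = 0$; Example~\ref{excontreex} demonstrates that without additional structure the second epi-derivative may even take the value $-\infty$ and fail $d^2_e f(x|v)(0) = 0$. The convergent supporting hyperplane supplies precisely the uniform affine minorant that survives the passage to the epi-limit, and the entire argument pivots on this single ingredient.
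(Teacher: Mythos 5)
Your proposal is correct and follows essentially the same route as the paper: the affine minorant $d^2_e f(x|v)(\cdot) \geq \langle \xi , \cdot \rangle + \beta$ obtained by passing the supporting-hyperplane inequality to the epigraphical limit is exactly what the paper establishes (via two contradiction arguments) to rule out the value $-\infty$ and to get $d^2_e f(x|v)(0) \geq \beta$, and your converse direction (Theorem~\ref{thmattouch2}, then Remark~\ref{remCSH} feeding the graph-point/value-convergence hypothesis of Theorem~\ref{thmattouch}, with the constant shift $\psi = \varphi + (\gamma - \varphi(z))$) coincides with the paper's argument since $\gamma = \langle \xi , z \rangle + \beta$.
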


\begin{proof}
	Let us assume that $f$ is twice epi-differentiable at $x$ for $v$. Our first aim is to prove that $d^2_e f(x|v) \in \Gamma_0(\H)$. Firstly, let us assume by contradiction that there exists $w \in \H$ such that $d^2_e f (x | v)(w)=-\infty$. Let $\lambda \in \R$ such that $\lambda < \langle \xi , w \rangle + \beta$. Since $(w,\lambda) \in \epi (  d^2_e f (x | v) ) = \limsup \epi ( \Delta^2_\t f (x | v) )$, we know that there exist $(t_n) \to 0$ and $(w_n,\lambda_n) \to (w,\lambda)$ such that $(w_n,\lambda_n) \in \epi ( \Delta^2_{t_n} f (x | v) )$, that is, $ \Delta^2_{t_n} f (x | v) (w_n) \leq \lambda_n$ for all $n \in \N$. From Definition~\ref{defCSH}, we get that $ \langle \xi_{t_n} , w_n \rangle + \beta_{t_n} \leq \lambda_n$ for all $n \in \N$. Letting $n \to +\infty$ raises a contradiction. We prove that $d^2_e f (x | v)$ is lower semi-continuous, convex and that $d^2_e f (x | v) (0) \leq 0$ as in the proof of Proposition~\ref{proptinde}. Our second aim is to prove that $d^2_e f (x | v)(0) \geq \beta$. By contradiction, let us assume that $d^2_e f (x | v)(0) < \beta$. Then, there exists $\varepsilon > 0$ such that $(0,\beta-\varepsilon) \in \epi ( d^2_e f (x | v) ) = \limsup \epi ( \Delta^2_\t f (x | v) )$. Thus, there exist $(t_n) \to 0$ and $(w_n,\lambda_n) \to (0,\beta-\varepsilon)$ such that $(w_n,\lambda_n) \in \epi ( \Delta^2_{t_n} f (x | v) )$, that is, $ \Delta^2_{t_n} f (x | v) (w_n) \leq \lambda_n$ for all $n \in \N$. From Definition~\ref{defCSH}, we get that $ \langle \xi_{t_n} , w_n \rangle + \beta_{t_n} \leq \lambda_n$ for all $n \in \N$. Letting $n \to +\infty$ constitutes a contradiction. Our last aim is to prove that $\partial f$ is proto-differentiable at $x$ for $v$ with $ D_p (\partial f)(x|v) = \partial ( d^2_e f(x|v) )$. This result directly follows from Proposition~\ref{prop12} and Theorem~\ref{thmattouch}.
	
	\medskip
	
	Now let us assume that $\partial f$ is proto-differentiable at $x$ for $v$ and $D_p (\partial f) (x | v)$ is a maximal monotone operator. From Proposition~\ref{prop12}, we get that $( \partial ( \Delta^2_\t f (x | v) ) )_{\t > 0} = ( \Delta_\t (\partial f) (x | v) )_{\t > 0}$ G-converges to the maximal monotone operator $ D_p (\partial f)(x|v)$. We deduce from Theorem~\ref{thmattouch2} that $ D_p (\partial f)(x|v) = \partial \varphi$ for some $\varphi \in \Gamma_0(\H)$. Our aim is now to apply Theorem~\ref{thmattouch} in order to conclude that $( \Delta^2_\t f (x | v) ) _{\t > 0}$ M-converges. From Definition~\ref{defCSH}, we have $(z_\t,\xi_\t) \to (z,\xi)$ and since $\xi_\t \in \partial ( \Delta^2_\t f (x | v) ) (z_\t)$, we deduce that $(z_\t,\xi_\t) \in \Gr ( \partial ( \Delta^2_\t f (x | v) ) )$ for all $\t > 0$. From Remark~\ref{propgraphconv}, we deduce that $(z,\xi) \in \Gr (\partial \varphi)$ and thus $z \in \dom (\varphi)$. Considering $\psi := \varphi - \varphi (z) + \langle \xi , z \rangle + \beta \in \Gamma_0(\H)$, we have $\partial \psi = \partial \varphi$ and $\Delta^2_\t f(x|v)(z_\t) = \langle \xi_\t , z_\t \rangle + \beta_\t \to \psi (z)$ since $(z_\t,\xi_\t,\beta_\t)_{\t > 0}$ is a convergent supporting hyperplane of $(\Delta^2_\t f(x | v))_{\t > 0}$. The proof is thereby completed using Theorem~\ref{thmattouch}.
\end{proof}

\begin{remark}\normalfont
	In the $t$-independent framework, Theorem~\ref{thmmain1} exactly coincides with Proposition~\ref{proptinde}. Indeed, in that case, Lemma~\ref{lemtinde} ensures that  $(\Delta^2_\t f(x | v))_{\t > 0}$ admits a convergent supporting hyperplane with $(z_\t,\xi_\t,\beta_\t) = (0,0,0)$ for all $\t > 0$.
\end{remark}

\begin{remark}\normalfont
	In Example~\ref{excontreex}, one can easily see that $(\Delta^2_\t f(x | v))_{\t > 0}$ does not admit a convergent supporting hyperplane.
\end{remark}

\begin{remark}\normalfont
	In contrary to Proposition~\ref{proptinde}, it might be possible that $d^2_e f(x|v) (0) \neq 0$ in Theorem~\ref{thmmain1}. We refer to Example~\ref{excontreex2} below.
\end{remark}

\begin{example}\label{excontreex2}\normalfont
	Let $\H = \R$ and $f(t,x) := \vert x - t^2 \vert$ for all $(t,x) \in \R_+ \times \R$. Let us consider $x =0$ and $v = 0 \in \partial f (0,x)$. One can easily compute that
	$$ \Delta^2_\t f(x|v) (w) = \dfrac{ \vert w - \t \vert - \t }{\t}, $$
	for all $\t > 0$ and all $w \in \R$. One can easily deduce that $f$ is twice epi-differentiable at $x$ for $v$ with 
	$$ d^2_e f(x|v)(w) = \left\lbrace 
	\begin{array}{lcr}
	-1 & \text{if} & w = 0, \\
	+\infty & \text{if} & w \neq 0.
	\end{array}
	\right. $$
	In particular note that $d^2_e f(x|v)(0) \neq 0$ and thus $d^2_e f(x|v)$ is not positively homogeneous of degree two.
\end{example} 

We conclude this section by proving that the concept of \textit{convergent supporting hyperplane} is a necessary and sufficient condition in a sense given by the following proposition.

\begin{proposition}\label{propderniere}
	Let $f \in  \Gamma_0(\cdot,\H)$, $x \in f^{-1}(\cdot,\R)$ and $v \in \partial f(0,x)$. If $f$ is twice epi-differentiable at $x$ for $v$, then the following assertions are equivalent:
	\begin{enumerate}
		\item[{\rm (i)}] $d^2_e f(x|v) \in \Gamma_0(\H)$;
		\item[{\rm (ii)}] $(\Delta^2_\t f(x | v))_{\t > 0}$ admits a convergent supporting hyperplane.
	\end{enumerate}
\end{proposition}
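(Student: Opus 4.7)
The plan is to handle the two implications separately, with the reverse direction being essentially a direct appeal to Theorem~\ref{thmmain1} and the forward direction being a straightforward application of Attouch's theorem combined with Remark~\ref{remCSH}.

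For the implication (ii) $\Rightarrow$ (i), I would simply invoke Theorem~\ref{thmmain1}: under the existence of a convergent supporting hyperplane, the twice epi-differentiability of $f$ at $x$ for $v$ implies that $d^2_e f(x|v) \in \Gamma_0(\H)$ (as stated in the ``in that case'' conclusion of that theorem). So this direction is immediate and requires no further work.

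For the implication (i) $\Rightarrow$ (ii), the plan is to apply Theorem~\ref{thmattouch} (Attouch's theorem) in the forward direction. By Proposition~\ref{prop12}, each $\Delta^2_\tau f(x|v)$ belongs to $\Gamma_0(\H)$, and by hypothesis $d^2_e f(x|v) = \elim \Delta^2_\tau f(x|v)$ also belongs to $\Gamma_0(\H)$. Hence Attouch's theorem is applicable to the family $(\Delta^2_\tau f(x|v))_{\tau > 0}$ with limit $d^2_e f(x|v)$. In particular, condition (ii) of that theorem provides $(z_\tau, \xi_\tau)_{\tau > 0} \to (z, \xi)$ such that $(z_\tau, \xi_\tau) \in \Gr(\partial(\Delta^2_\tau f(x|v)))$ for all $\tau > 0$ and $\Delta^2_\tau f(x|v)(z_\tau) \to d^2_e f(x|v)(z)$, where the latter limit is a real number (see Remark~\ref{remfinite}). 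This is precisely the reformulation of ``convergent supporting hyperplane'' given by Remark~\ref{remCSH}, so setting $\beta_\tau := \Delta^2_\tau f(x|v)(z_\tau) - \langle \xi_\tau, z_\tau \rangle$ and $\beta := d^2_e f(x|v)(z) - \langle \xi, z \rangle$ yields $(z_\tau, \xi_\tau, \beta_\tau) \to (z, \xi, \beta)$ with each $(z_\tau, \xi_\tau, \beta_\tau)$ being a supporting hyperplane of $\Delta^2_\tau f(x|v)$, as required.

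There is no real obstacle in this proof: the whole statement essentially repackages the forward direction of Attouch's theorem into the language of convergent supporting hyperplanes. The only subtle point is to recognize that Remark~\ref{remCSH} bridges precisely the gap between the conclusion of Theorem~\ref{thmattouch}(ii) and Definition~\ref{defCSH}, and that the membership $d^2_e f(x|v) \in \Gamma_0(\H)$ is exactly what is needed to legitimately apply Attouch's theorem (since the theorem's statement requires the limit function to lie in $\Gamma_0(\H)$). Once this is observed, the argument is a two-line invocation of existing results.
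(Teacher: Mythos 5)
Your proof is correct and follows essentially the same route as the paper: the implication (ii)$\Rightarrow$(i) is read off from the ``in that case'' conclusion of Theorem~\ref{thmmain1}, and (i)$\Rightarrow$(ii) is obtained by applying Theorem~\ref{thmattouch} to the family $(\Delta^2_\t f(x|v))_{\t>0}$ (legitimate by Proposition~\ref{prop12}), using Remark~\ref{remfinite} to get finiteness of the limit value and Remark~\ref{remCSH} to translate the resulting data into a convergent supporting hyperplane. Your explicit construction of $\beta_\t$ just makes the content of Remark~\ref{remCSH} concrete; nothing further is needed.
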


\begin{proof}
	From Theorem~\ref{thmmain1}, we only need to prove the implication (i)$\Rightarrow$(ii). Let us assume that $d^2_e f(x|v) \in \Gamma_0(\H)$. From Theorem~\ref{thmattouch}, we deduce that there exists $(z_\t, \xi_\t)_{\t > 0} \to (z,\xi)$ such that $(z_\t , \xi_\t) \in \Gr ( \partial ( \Delta^2_\t f (x|v) ) )$ for all $\t > 0$ and $\Delta^2_\t f (x|v)(z_\t) \to  d^2_e f(x|v)(z)$. From Remark~\ref{remfinite}, we know that $d^2_e f(x|v)(z) \in \R$. This concludes the proof from Remark~\ref{remCSH}.

\end{proof}

\subsection{Proto-differentiability of the proximity operator and sensitivity analysis}\label{secmain2}

Let $(A,f) \in \mathcal{A}(\cdot,\H) \times \Gamma_0 (\cdot,\H)$. For the simplicity of notations we introduce
$$ \fonction{\Prox_{A,f}}{\R_+ \times \H}{\H}{(t,x)}{\Prox_{A,f} (t,x) := \prox_{A(t,\cdot),f(t,\cdot)} (x).} $$
Using the notations introduced at the beginning of Section~\ref{sec3}, note that $\Prox_{A,f} = (A+\partial f)^{-1}$. From Propositions~\ref{prop1} and \ref{prop2}, one can easily conclude the following lemma.

\begin{lemma}\label{lemlem}
	Let $(A,f) \in \mathcal{A}(\cdot,\H) \times \Gamma_0 (\cdot,\H)$ and $x \in \H$. Let us assume that $A$ is semi-differentiable at $v := \Prox_{A,f} (0,x)$. Then, $\Prox_{A,f}$ is proto-differentiable at $x$ for $v$ if and only if $\partial f$ is proto-differentiable at $v$ for $v_0 := x - A (0,v) \in \partial f ( 0 , v)$. In that case, it holds that
	$$ D_p (\Prox_{A,f}) (x|v) = \Big( D_s A (v) + D_p ( \partial f ) ( v | v_0 ) \Big)^{-1} .$$
\end{lemma}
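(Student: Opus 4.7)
The plan is to combine Propositions~\ref{prop1} and \ref{prop2} through the identity $\Prox_{A,f} = (A+\partial f)^{-1}$, which is recalled just before the statement of the lemma.

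First, I would verify the membership $v_0 \in \partial f(0,v)$ asserted in the statement. Indeed, by definition $v = \Prox_{A,f}(0,x) = (A(0,\cdot)+\partial f(0,\cdot))^{-1}(x)$, so $x \in A(0,v) + \partial f(0,v)$, which rewrites as $v_0 = x - A(0,v) \in \partial f(0,v)$. This justifies applying Proposition~\ref{prop1} below with the base point $v$ and with $x = A(0,v) + v_0$.

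Next, applying Proposition~\ref{prop2} to the parameterized set-valued map $A+\partial f$ at the point $v$ for $x$, and using $\Prox_{A,f} = (A+\partial f)^{-1}$, one obtains that $\Prox_{A,f}$ is proto-differentiable at $x$ for $v$ if and only if $A+\partial f$ is proto-differentiable at $v$ for $x$, with
\[
D_p \Prox_{A,f}(x|v) = \bigl( D_p(A+\partial f)(v|x) \bigr)^{-1}.
\]
Then, since $A$ is semi-differentiable at $v$, Proposition~\ref{prop1} applies at $v$ to $A + \partial f$ for $x$, yielding that $A+\partial f$ is proto-differentiable at $v$ for $x$ if and only if $\partial f$ is proto-differentiable at $v$ for $x - A(0,v) = v_0$, with
\[
D_p(A+\partial f)(v|x) = D_s A(v) + D_p(\partial f)(v|v_0).
\]
Chaining these two equivalences gives the desired equivalence, and substituting the second formula into the first yields the announced expression for $D_p \Prox_{A,f}(x|v)$.

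There is no genuine obstacle here: the argument is a bookkeeping exercise. The only step that deserves care is checking that Propositions~\ref{prop1} and \ref{prop2} are applied with matching base points and reference vectors, which is why the preliminary verification $v_0 \in \partial f(0,v)$ is stated explicitly at the start.
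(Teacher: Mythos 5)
Your proof is correct and follows exactly the route the paper intends: the paper states that Lemma~\ref{lemlem} "can easily be concluded" from Propositions~\ref{prop1} and~\ref{prop2} via $\Prox_{A,f}=(A+\partial f)^{-1}$, and your argument simply fills in that bookkeeping, with the base points and reference vectors matched correctly. Nothing to add.
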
 

From Theorem~\ref{thmmain1}, Proposition~\ref{propderniere} and Lemma~\ref{lemlem}, we deduce the following theorem.

\begin{theorem}\label{thmmain2}
	Let $(A,f) \in \mathcal{A}(\cdot,\H) \times \Gamma_0 (\cdot,\H)$ and $x \in \H$. If the following assertions are satisfied:
	\begin{enumerate}
		\item[{\rm (i)}] $A \in \mathcal{A}_{\mathrm{unif}} (\cdot,\H)$;
		\item[{\rm (ii)}] $A$ is semi-differentiable at $v := \Prox_{A,f} (0,x)$;
		\item[{\rm (iii)}] $f$ is twice epi-differentiable at $v$ for $v_0 := x - A (0,v) \in \partial f ( 0 , v)$;
		\item[{\rm (iv)}] $d^2_e f(v|v_0) \in \Gamma_0(\H)$;
	\end{enumerate}
	then $\Prox_{A,f}$ is proto-differentiable at $x$ for $v$ with 
	$$ D_p (\Prox_{A,f}) (x|v) = \prox_{D_s A(v) , \, d^2_e f(v|v_0)} .$$
\end{theorem}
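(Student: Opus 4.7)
The proof should be a relatively direct assembly of the tools already established in Sections~\ref{secnewassumption} and at the start of Section~\ref{secmain2}. The plan is to reduce the claim, via Lemma~\ref{lemlem}, to a statement about the proto-differentiability of $\partial f$ at $v$ for $v_0$, then invoke Theorem~\ref{thmmain1} to convert the twice epi-differentiability hypothesis on $f$ into the needed proto-differentiability of $\partial f$, and finally identify the resulting inverse with a generalized proximity operator.

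First, assumptions (iii) and (iv) say exactly that $f$ is twice epi-differentiable at $v$ for $v_0$ and that the second epi-derivative $d^2_e f(v|v_0)$ lies in $\Gamma_0(\H)$. By Proposition~\ref{propderniere}, this guarantees that $(\Delta^2_\t f(v|v_0))_{\t > 0}$ admits a convergent supporting hyperplane. I am then in position to apply Theorem~\ref{thmmain1}, which yields that $\partial f$ is proto-differentiable at $v$ for $v_0$, that the proto-derivative $D_p(\partial f)(v|v_0)$ is maximal monotone, and moreover that
\[
D_p(\partial f)(v|v_0) = \partial \bigl( d^2_e f(v|v_0) \bigr).
\]

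Second, assumption (ii) provides the semi-differentiability of $A$ at $v$, so Lemma~\ref{lemlem} applies: $\Prox_{A,f}$ is proto-differentiable at $x$ for $v$ and
\[
D_p(\Prox_{A,f})(x|v) = \bigl( D_s A(v) + D_p(\partial f)(v|v_0) \bigr)^{-1}.
\]
Substituting the expression for $D_p(\partial f)(v|v_0)$ from the previous step gives
\[
D_p(\Prox_{A,f})(x|v) = \bigl( D_s A(v) + \partial ( d^2_e f(v|v_0) ) \bigr)^{-1}.
\]

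Third, to recognize the right-hand side as $\prox_{D_s A(v), \, d^2_e f(v|v_0)}$ in the sense of Section~\ref{secconvanal}, I need $D_s A(v) \in \mathcal{A}(\H)$ and $d^2_e f(v|v_0) \in \Gamma_0(\H)$. The second is assumption (iv), while the first follows by combining (i) and (ii) with Proposition~\ref{propDA}, which asserts precisely that for $A \in \mathcal{A}_{\mathrm{unif}}(\cdot,\H)$ semi-differentiable at $v$, the semi-derivative $D_s A(v)$ is Lipschitz continuous and strongly monotone. Then by the very definition $\prox_{A',f'} := (A' + \partial f')^{-1}$ for $(A',f') \in \mathcal{A}(\H) \times \Gamma_0(\H)$, the claimed identity follows.

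There is no real obstacle here beyond bookkeeping: every hypothesis of every invoked result is explicitly listed among (i)--(iv) (with (i) used only to make (ii) yield a Lipschitz/strongly monotone semi-derivative so that the inverse in the last display is literally a generalized proximity operator rather than a formally defined inverse). The only subtle point worth underscoring is that (iv) plays a double role: it supplies the hypothesis needed by Proposition~\ref{propderniere} to activate Theorem~\ref{thmmain1}, and it simultaneously certifies that the function appearing in the final proximity operator belongs to $\Gamma_0(\H)$, so that $\prox_{D_s A(v),\, d^2_e f(v|v_0)}$ is well-defined in the framework of Section~\ref{secconvanal}.
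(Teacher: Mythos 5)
Your proof is correct and follows exactly the route the paper takes: Proposition~\ref{propDA} to get $D_s A(v) \in \mathcal{A}(\H)$ so the proximity operator is well-defined, Proposition~\ref{propderniere} to extract the convergent supporting hyperplane from assumptions (iii)--(iv), Theorem~\ref{thmmain1} to pass to the proto-differentiability of $\partial f$ with $D_p(\partial f)(v|v_0)=\partial(d^2_e f(v|v_0))$, and Lemma~\ref{lemlem} to assemble the final formula. The paper states this in one line; you have simply written out the same bookkeeping in full, including the correct observation that (iv) is used both to activate Theorem~\ref{thmmain1} and to make the final operator a genuine generalized proximity operator.
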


\begin{proof}
	From Proposition~\ref{propDA}, we know that $D_s A(v) \in \mathcal{A}(\H) $. Hence, the mapping $\prox_{D_s A(v), \, d^2_e f(v|v_0)} : \H \to \H$ is well-defined (see Section~\ref{secconvanal}). The proof of Theorem~\ref{thmmain2} easily follows from Theorem~\ref{thmmain1}, Proposition~\ref{propderniere} and Lemma~\ref{lemlem}.
\end{proof}

Now we return to the initial motivation of the present paper, that is, the sensitivity analysis, with respect to the parameter $t \geq 0$, of the general nonlinear variational inequality of second kind given by

\vspace{-0.19cm}
\begin{minipage}{.31\linewidth}
	\begin{equation*}
	\!\! \!\!\! \!\!\! (\mathrm{VI}( A(t,\cdot),f(t,\cdot),x(t) ))
	\end{equation*}
\end{minipage}
\begin{minipage}{.68\linewidth}
	\begin{equation*}
	\!\! \forall z \in \H, \;\; \langle A(t,y) , z - y \rangle + f(t,z) - f(t,y) \geq \langle x(t) , z - y \rangle ,
	\end{equation*}
\end{minipage}
\vspace{0.35cm}

where $(A,f) \in \mathcal{A}(\cdot,\H) \times \Gamma_0(\cdot,\H)$ and where $x : \R_+ \to \H$ is a given function. From Section~\ref{secconvanal}, the above variational inequality admits for all $t \geq 0$ a unique solution $y(t) \in \H$ given by
$$ y(t) = \prox_{A(t,\cdot),f(t,\cdot)} (x(t)) = \Prox_{A,f} (t,x(t)). $$
The next theorem is the major result of the present paper. It provides sufficient conditions on $A$, $f$ and $x$ under which $y : \R_+ \to \H$ is differentiable at $t=0$ and provides an explicit formula for~$y'(0)$.

\begin{theorem}\label{thmmain3}
	Let $(A,f) \in \mathcal{A}(\cdot,\H) \times \Gamma_0 (\cdot,\H)$ and let $x : \R_+ \to \H$ be a function. We consider the function $y : \R_+ \to \H$ defined by
	$$ y(t) := \prox_{A(t,\cdot),f(t,\cdot)} (x(t)) ,$$
	for all $t \geq 0$. If the following assertions are satisfied:
	\begin{enumerate}
		\item[{\rm (i)}] $x$ is differentiable at $t=0$;
		\item[{\rm (ii)}] $A \in \mathcal{A}_{\mathrm{unif}} (\cdot,\H)$;
		\item[{\rm (iii)}] $A$ is semi-differentiable at $y(0)$;
		\item[{\rm (iv)}] $f$ is twice epi-differentiable at $y(0)$ for $v_0 := x(0)-A(0,y(0)) \in \partial f ( 0 , y(0))$;
		\item[{\rm (v)}] $d^2_e f(y(0)|v_0) \in \Gamma_0(\H)$;
	\end{enumerate}
	then $y : \R_+ \to \H$ is differentiable at $t=0$ with 
	$$ y'(0) = \prox_{D_s A( y(0) ), \, d^2_e f ( y(0) | v_0 )} (x'(0)) .$$
\end{theorem}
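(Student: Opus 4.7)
The plan is to apply Theorem~\ref{thmmain2} at the point $x(0) \in \H$ and then bootstrap the resulting proto-differentiability of $\Prox_{A,f}$ at $x(0)$ for $y(0)$ into the differentiability of the composition $t \mapsto y(t) = \Prox_{A,f}(t, x(t))$ at $t = 0$, by exploiting assumption~(i) together with the uniform Lipschitz property of $\Prox_{A,f}(\t,\cdot)$ inherited from $A \in \mathcal{A}_{\mathrm{unif}}(\cdot,\H)$.

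Setting $v := y(0) = \Prox_{A,f}(0,x(0))$ and $v_0 := x(0) - A(0,y(0))$, assumptions~(ii)--(v) of the present statement match verbatim the requirements~(i)--(iv) of Theorem~\ref{thmmain2} at the point $x(0)$. That theorem then asserts that $\Prox_{A,f}$ is proto-differentiable at $x(0)$ for $y(0)$, with single-valued proto-derivative
$$ P := \prox_{D_s A(y(0)),\, d^2_e f(y(0)|v_0)} : \H \longrightarrow \H . $$
For each $\t > 0$, I set $w_\t := \frac{x(\t)-x(0)}{\t}$, so that $w_\t \to x'(0)$ by~(i), and a direct computation gives
$$ \frac{y(\t)-y(0)}{\t} = \frac{\Prox_{A,f}(\t,x(0)+\t w_\t) - y(0)}{\t} = \Delta_\t \Prox_{A,f}(x(0)|y(0))(w_\t) . $$
It thus remains to prove that this right-hand side tends to $P(x'(0))$ as $\t \to 0$.

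The only subtle point --- and the main potential obstacle --- is that G-convergence provides convergence of graphs rather than pointwise evaluation at a varying input $w_\t$. To bridge this gap, I would fix an arbitrary sequence $\t_n \to 0$ and apply the inner-limit inclusion $\Gr(P) \subset \liminf \Gr(\Delta_\t \Prox_{A,f}(x(0)|y(0)))$ to the point $(x'(0), P(x'(0)))$: this yields sequences $\tilde w_n \to x'(0)$ and $\tilde y_n \to P(x'(0))$ with $\tilde y_n = \Delta_{\t_n} \Prox_{A,f}(x(0)|y(0))(\tilde w_n)$ for $n$ large. Since $A \in \mathcal{A}_{\mathrm{unif}}(\cdot,\H)$ has uniform strong monotonicity constant $\alpha > 0$, each $\Prox_{A,f}(\t,\cdot)$ is $\frac{1}{\alpha}$-Lipschitz, whence
$$ \left\| \frac{y(\t_n)-y(0)}{\t_n} - \tilde y_n \right\| \leq \frac{1}{\alpha} \| w_{\t_n} - \tilde w_n \| \longrightarrow 0 . $$
Combining this with $\tilde y_n \to P(x'(0))$ and the arbitrariness of $\t_n$ yields $\frac{y(\t)-y(0)}{\t} \to P(x'(0))$, which is exactly the announced derivative. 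The uniformity in $t$ of the Lipschitz constant --- encoded in assumption~(ii) --- is essential here: without it, the proto-derivative of $\Prox_{A,f}$ alone would give no control over the pointwise difference quotient of $y$.
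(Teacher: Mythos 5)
Your proposal is correct and follows essentially the same route as the paper's proof: apply Theorem~\ref{thmmain2} at $x(0)$, use the inner-limit part of the graphical convergence to produce a sequence $(\tilde w_n,\tilde y_n)$ in the graph of the difference quotient converging to $(x'(0),P(x'(0)))$, and transfer via the uniform $\frac{1}{\alpha}$-Lipschitz continuity of $\prox_{A(\t,\cdot),f(\t,\cdot)}$. The only difference is presentational — the paper phrases the final step as a proof by contradiction, whereas you argue directly along an arbitrary sequence $\t_n \to 0$.
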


\begin{proof}
	For the ease of notations, we denote by $\prox_{\chi} := \prox_{D_s A( y(0) ), \, d^2_e f ( y(0) | v_0 )}  $. Recall that $\prox_\chi$ is $\frac{1}{\alpha}$-Lipschitz continuous where $\alpha > 0$ denotes the uniform strong monotonicity coefficient of $A \in \mathcal{A}_{\mathrm{unif}} (\cdot,\H)$ (see Section~\ref{secconvanal} and Remark~\ref{remunifconstant}). From Theorem~\ref{thmmain2}, we deduce that $\Prox_{A,f}$ is proto-differentiable at $x(0)$ for $y(0)$ with
	$$ D_p (\Prox_{A,f}) (x(0)|y(0)) = \prox_{\chi} .$$
	By contradiction, let us assume that there exist $\varepsilon > 0$ and $(t_n) \to 0$ such that 
	$$ \varepsilon \leq \left\Vert \frac{y(t_n)-y(0)}{t_n} - \prox_{\chi}( x'(0) ) \right\Vert ,$$
	for all $n \in \N$. Since $\prox_{\chi} (x'(0)) = D_p ( \Prox_{A,f}) (x(0)|y(0)) (x'(0))$, we deduce that there exist $(w_n,z_n)_{n \in \N} \to (x'(0),\prox_{\chi} (x'(0)))$ and $N \in \N$ such that 
	$$ (w_n,z_n) \in \Gr \Big( \Delta_{t_n} (\Prox_{A,f}) (x(0)|y(0)) \Big) $$ 
	for all $n \geq N$. We deduce that 
	$$ \prox_{A(t_n,\cdot),f(t_n,\cdot)} ( x(0) + t_n w_n ) = y(0) + t_n z_n, $$
	for all $n \geq N$. Finally, we obtain that
	\begin{multline*}
	\varepsilon \leq \left\Vert \dfrac{ \prox_{A(t_n,\cdot),f(t_n,\cdot)} (x(t_n))  - \prox_{A(t_n,\cdot),f(t_n,\cdot)} ( x(0) + t_n w_n )  }{t_n} \right\Vert \\ 
	+ \left\Vert \dfrac{ \prox_{A(t_n,\cdot),f(t_n,\cdot)} ( x(0) + t_n w_n ) - y(0) }{t_n} - z_n \right\Vert + \Vert z_n - \prox_\chi ( x'(0) ) \Vert ,
	\end{multline*}
	for all $n \in \N$. The second term is equal to zero for all $n \geq N$. Using the $\frac{1}{\alpha}$-Lipschitz continuity of $\prox_{A(t_n,\cdot),f(t_n,\cdot)}$, we obtain that $ \varepsilon \leq \frac{1}{\alpha} \Vert \frac{x(t_n)-x(0)}{t_n} - w_n \Vert + \Vert z_n - \prox_\chi ( x'(0) ) \Vert$ for all $n \geq N$. This raises a contradiction since $w_n \to x'(0)$ and $z_n \to \prox_{\chi} (x'(0))$. The proof is complete.
\end{proof}

\begin{remark}\normalfont
	Let us assume that all assumptions of Theorem~\ref{thmmain3} are satisfied. Then its conclusion can be rewritten as follows. If $y(t)$ is the unique solution of the variational inequality $(\mathrm{VI}(A(t,\cdot), \, f(t,\cdot), \, x(t)))$ for all $t \geq 0$, then $y : \R_+ \to \H$ is differentiable at $t=0$ and $y'(0)$ is the unique solution of $(\mathrm{VI}(D_s A(y(0)), \, d^2_e f (y(0)|v_0), \, x'(0)))$.
\end{remark}

\section{Applications to parameterized convex minimization problems}\label{sec5}

In this section our aim is to apply our main result (Theorem~\ref{thmmain3}) to the sensitivity analysis of parameterized convex minimization problems. We start with a general result (see Proposition~\ref{propgenefin} in Section~\ref{secappl1}), then we give a simple illustration in the one-dimensional setting (see Section~\ref{secappl2}). We conclude by studying in details the case of parameterized smooth convex minimization problems with inequality constraints in a finite-dimensional setting (see Proposition~\ref{propfinalfinal} in Section~\ref{secappl3}).

\subsection{A general result}\label{secappl1}
In this section we will prove from Theorem~\ref{thmmain3} that the derivative of the solution of a parameterized convex minimization problem is still, under some appropriate assumptions, the solution of a convex minimization problem. The following proposition is in this sense.

\begin{proposition}\label{propgenefin}
	Let $f \in \Gamma_0(\cdot,\H)$ and $\ell : \R_+ \to \H$ be given functions. Let $g : \R_+ \times \H \to \R$ be such that $g(t,\cdot)$ is differentiable on $\H$ with $\nabla_x g(t,\cdot) \in \mathcal{A}(\H)$ for all $t \geq 0$. Then, for all $t \geq 0$, the parameterized convex minimization problem
	
	\vspace{-0.19cm}\noindent
	\begin{minipage}{.1\linewidth}
		\begin{equation*}
		(\mathcal{M}_1 (t))
		\end{equation*}
	\end{minipage}
	\begin{minipage}{.8\linewidth}
		\begin{equation*}
		\argmin_{x \in \H} \Big[ f(t,x) + g(t,x) - \langle \ell(t) , x \rangle \Big]
		\end{equation*}
	\end{minipage}\vspace{0.35cm}
	
	admits a unique solution denoted by $y(t)$. If moreover the following assumptions are satisfied:
	\begin{enumerate}
		\item[{\rm (i)}] $\ell$ is differentiable at $t=0$;
		\item[{\rm (ii)}] $\nabla_x g \in \mathcal{A}_{\mathrm{unif}}(\cdot,\H)$;
		\item[{\rm (iii)}] $\nabla_x g$ is of class $C^1$ on $\R_+ \times \H$;
		\item[{\rm (iv)}] $f$ is twice epi-differentiable at $y(0)$ for $v_0 := \ell (0)-\nabla_x g(0,y(0)) \in \partial f ( 0 , y(0))$;
		\item[{\rm (v)}] $d^2_e f (y(0),v_0) \in \Gamma_0(\H)$;
	\end{enumerate}
	then $y : \R_+ \to \H$ is differentiable at $t=0$ and $y'(0)$ is the unique solution of the convex minimization problem given by
	
	\vspace{-0.19cm}\noindent
	\begin{minipage}{.1\linewidth}
		\begin{equation*}
		(\mathcal{M}_1'(0))
		\end{equation*}
	\end{minipage}
	\begin{minipage}{.8\linewidth}
		\begin{multline*}
		\qquad \argmin_{x \in \H} \Big[ d^2_e f (y(0),v_0) (x) + \dfrac{1}{2} \langle \nabla^2_{xx} g(0, y(0) ) (x) , x \rangle \\
		\qquad + \langle \nabla^2_{tx} g (0 , y(0) ) -  \ell'(0) , x \rangle \Big] . 
		\end{multline*}
	\end{minipage}\vspace{0.35cm}	
	
\end{proposition}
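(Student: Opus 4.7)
The plan is to reduce Proposition~\ref{propgenefin} to Theorem~\ref{thmmain3} by translating $(\mathcal{M}_1(t))$ into an evaluation of a generalized proximity operator, and then translating the resulting variational inequality characterizing $y'(0)$ back into the minimization form $(\mathcal{M}_1'(0))$.

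First I would observe that, for every fixed $t\geq 0$, the objective of $(\mathcal{M}_1(t))$ is proper, lower semi-continuous and strongly convex (the strong convexity of $g(t,\cdot)$ being inherited from the strong monotonicity of $\nabla_x g(t,\cdot)\in\mathcal{A}(\H)$), hence it admits a unique minimizer $y(t)\in\H$. Fermat's rule, combined with the Moreau--Rockafellar sum rule (applicable since $g(t,\cdot)$ is continuous on $\H$), yields $\ell(t)\in\nabla_x g(t,y(t))+\partial f(t,y(t))$, so that
$$y(t)=\Prox_{\nabla_x g,\,f}(t,\ell(t))$$
with the notation of Section~\ref{secmain2}.

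Next I would apply Theorem~\ref{thmmain3} to the data $A:=\nabla_x g$ and $x(\cdot):=\ell(\cdot)$. Assumptions (i), (ii), (iv) and (v) of Theorem~\ref{thmmain3} coincide with the corresponding assumptions of the proposition, so the only nontrivial verification is the semi-differentiability of $A=\nabla_x g$ at $y(0)$. Exploiting the joint $C^1$ regularity of $\nabla_x g$ on $\R_+\times\H$ (hypothesis~(iii)) through the fundamental theorem of calculus, I would show that
$$D_s(\nabla_x g)(y(0))(w)=\nabla^2_{tx}g(0,y(0))+\nabla^2_{xx}g(0,y(0))(w)\quad\text{for all }w\in\H.$$
Theorem~\ref{thmmain3} then gives that $y$ is differentiable at $t=0$ with
$$y'(0)=\prox_{D_s(\nabla_x g)(y(0)),\,d^2_e f(y(0)|v_0)}(\ell'(0)),$$
which means, by the VI characterization of the generalized proximity operator recalled in Section~\ref{secconvanal}, that $y'(0)$ is the unique solution of $\VI(D_s(\nabla_x g)(y(0)),\,d^2_e f(y(0)|v_0),\,\ell'(0))$.

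It then remains to rewrite this variational inequality as the first-order optimality condition of $(\mathcal{M}_1'(0))$. Since $\nabla_x g$ is of class $C^1$, the function $g(0,\cdot)$ is of class $C^2$ on $\H$, so Schwarz's theorem implies that $\nabla^2_{xx}g(0,y(0))$ is self-adjoint; in particular it coincides with the Fréchet derivative of the quadratic $x\mapsto\tfrac12\langle\nabla^2_{xx}g(0,y(0))(x),x\rangle$. A second application of Fermat's rule together with the Moreau--Rockafellar sum rule then identifies the VI above with the optimality condition for $(\mathcal{M}_1'(0))$, and the $\alpha$-strong monotonicity of $\nabla^2_{xx}g(0,y(0))$ (inherited from the uniform strong monotonicity of $\nabla_x g$) makes the objective of $(\mathcal{M}_1'(0))$ strongly convex, hence forces uniqueness of its minimizer. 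The main obstacle in this scheme is the Taylor expansion computing $D_s(\nabla_x g)(y(0))$: the joint $C^1$ regularity of $\nabla_x g$ in $(t,x)$---and not merely separate regularity in each variable---is exactly what is needed in order for the limit defining the semi-derivative to exist and to agree with the explicit formula above.
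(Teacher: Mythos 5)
Your proposal is correct and follows essentially the same route as the paper: identify $y(t)=\Prox_{\nabla_x g,\,f}(t,\ell(t))$ via Fermat's rule and the sum rule, verify that hypothesis (iii) yields the semi-differentiability of $\nabla_x g$ with the expected formula, invoke Theorem~\ref{thmmain3}, and rewrite the resulting inclusion as the optimality condition of $(\mathcal{M}_1'(0))$. Your explicit remarks on the self-adjointness of $\nabla^2_{xx}g(0,y(0))$ (needed so that the quadratic term has the right gradient) and on the strong convexity forcing uniqueness in $(\mathcal{M}_1'(0))$ are welcome details that the paper leaves implicit.
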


\begin{proof}
	Since $\nabla_x g (t,\cdot)$ is monotone, we deduce that $g(t,\cdot)$ is convex on $\H$ for all $t \geq 0$. As a consequence, for all $t \geq 0$, $y(t)$ is a solution of the convex minimization problem~$(\mathcal{M}_1(t))$ if and only if
	$$ 0 \in \partial \Big( f(t,\cdot)+g(t,\cdot) - \langle \ell(t) , \cdot \rangle \Big) (y(t)) = \partial f(t,y(t))+\partial g(t,y(t)) - \ell(t) . $$
	In the above equation, since $g(t,\cdot)$ is differentiable and $\langle \ell(t),\cdot \rangle$ is linear continuous, the subdifferential of the sum is equal to the sum of the subdifferentials. Denoting by $A(t,\cdot) := \partial g(t,\cdot) = \nabla_x g (t,\cdot) \in \mathcal{A}(\H)$, one can easily conclude that the convex minimization problem~$(\mathcal{M}_1(t))$ admits a unique solution $y(t)$ given by
	$$ y(t) = \prox_{A(t,\cdot),f(t,\cdot)} (\ell(t)). $$	
	Let us prove the second part of Proposition~\ref{propgenefin}. Since $\nabla_x g$ is of class $C^1$ on $\R_+ \times \H$, one can easily prove that $A$ is semi-differentiable on $\H$ with
	$$ D_s A(x)(w) = \nabla^2_{xx} g(0,x)(w) + \nabla^2_{tx} g(0,x), $$
	for all $x$, $w \in \H$. From Theorem~\ref{thmmain3}, we deduce that $y : \R_+ \to \H$ is differentiable at $t=0$ with
	$$ y'(0) = \prox_{D_s A( y(0) ), \, d^2_e f ( y(0) | v_0 )} (\ell'(0)) .$$
	We deduce that
	$$ \ell'(0) \in D_s A( y(0) ) ( y'(0) ) + \partial \Big( d^2_e f ( y(0) | v_0 ) \Big) (y'(0)) . $$
	Since the function $ x \in \H \mapsto \frac{1}{2} \langle \nabla^2_{xx} g(0,y(0))(x) , x \rangle \in \R$ is convex and differentiable on $\H$, we obtain that
	$$ 0 \in \partial \Big( d^2_e f ( y(0) | v_0 ) + \dfrac{1}{2} \langle \nabla^2_{xx} g(0,y(0))(\cdot) , \cdot \rangle + \langle \nabla^2_{tx} g(0,y(0)) - \ell'(0) , \cdot \rangle \Big) (y'(0)) ,$$
	which concludes the proof.
\end{proof}

\subsection{Illustration with a one-dimensional example}\label{secappl2}
In this section we denote by $x^+ := \max (0,x)$ and by $x^- := \min (0,x)$ for all $x \in \R$.

\medskip

In order to illustrate Proposition~\ref{propgenefin}, we study in this section a one-dimensional example. Precisely we consider the parameterized one-dimensional convex minimization problem given by

\vspace{-0.19cm}\noindent
\begin{minipage}{.1\linewidth}
	\begin{equation*}
	(\mathcal{M}_2 (t))
	\end{equation*}
\end{minipage}
\begin{minipage}{.8\linewidth}
	\begin{equation*}
	\argmin_{x \in \R} \Big( a(t) \vert x - b(t) \vert + \frac{c(t)}{2} x^2 - d(t) x \Big), 
	\end{equation*}
\end{minipage}\vspace{0.35cm}

where 
\begin{itemize}
	\item $a : \R_+ \to \R$ is differentiable at $t=0$ and $a(t) > 0$ for all $t \geq 0$;
	\item $b : \R_+ \to \R$ is twice differentiable at $t=0$ and $b'(0)=0$;
	\item $c : \R_+ \to \R$ is of class $C^1$ on $\R_+$ and $\alpha \leq c(t) \leq \beta $ for all $t \geq 0$ for some $0 < \alpha \leq \beta$;
	\item $d : \R_+ \to \R$ is differentiable at $t=0$.
\end{itemize}
In order to apply Proposition~\ref{propgenefin}, one has to consider $\H = \R$ and 
$$ f(t,x) := a(t) \vert x - b(t) \vert,  \quad g(t,x) := \frac{c(t)}{2} x^2 \quad \text{and} \quad \ell(t):=d(t) , $$
for all $(t,x) \in \R_+ \times \R$. Almost all hypotheses of Proposition~\ref{propgenefin} are easily checkable. Actually the only difficult part is to check that $f$ is twice epi-differentiable and that its second epi-derivative belongs to $\Gamma_0(\H)$. This follows from the following lemma.

\begin{lemma}\label{lemfin}
	Let $f : \R_+ \times \R \to \R$ be defined by $f(t,x) := a(t) \vert x - b(t) \vert$ for all $(t,x) \in \R_+ \times \R$, where 
	\begin{itemize}
		\item $a : \R_+ \to \R$ is differentiable at $t=0$ and $a(t) > 0$ for all $t \geq 0$;
		\item $b : \R_+ \to \R$ is twice differentiable at $t=0$ and $b'(0)=0$.
	\end{itemize}
	Then $f$ is twice epi-differentiable at every $x \in \R$ for every $v \in \partial f(0,x)$ and we obtain five different cases:
	\begin{enumerate}
		\item[{\rm (i)}] If $x > b(0)$, then $v=a(0)$ and we obtain that $d^2_e f(x|v)(w) = a'(0)w$ for all $w \in \R$;
		\item[{\rm (ii)}] If $x=b(0)$, then $v \in [-a(0),a(0)]$. 
		\begin{enumerate}
			\item If $v=a(0)$, we obtain that
			$$ d^2_e f(x|v)(w) = \left\lbrace 
			\begin{array}{lcr}
			a'(0)w - a(0) (b''(0))^+ & \text{if} & w \geq 0, \\
			+\infty & \text{if} & w < 0,
			\end{array}
			\right. $$
			for all $w \in \R$.
			\item If $v \in (-a(0),a(0))$, we obtain that
			$$ d^2_e f(x|v)(w) = \left\lbrace 
			\begin{array}{lcr}
			\left( \dfrac{a(0)-v}{2} \right) b''(0) - a(0) ( b''(0) )^+ & \text{if} & w = 0, \\
			+\infty & \text{if} & w \neq 0,
			\end{array}
			\right. $$
			for all $w \in \R$.
			\item If $v=-a(0)$, we obtain that
			$$ d^2_e f(x|v)(w) = \left\lbrace 
			\begin{array}{lcr}
			- a'(0)w - a(0) (-b''(0))^+ & \text{if} & w \leq 0, \\
			+\infty & \text{if} & w > 0,
			\end{array}
			\right. $$
			for all $w \in \R$.
		\end{enumerate}
		\item[{\rm (iii)}] If $x < b(0)$, then $v=-a(0)$ and we obtain that $d^2_e f(x|v)(w) = - a'(0)w$ for all $w \in \R$.
	\end{enumerate}
	In each above case, it holds that $d^2_e f(x|v) \in \Gamma_0(\H)$.
\end{lemma}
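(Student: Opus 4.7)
My plan is to compute $\Delta^2_\tau f(x|v)(w)$ explicitly in each case and verify the Mosco-convergence using the two conditions of Proposition~\ref{propcharact}; since $\H = \R$ is finite-dimensional, Mosco and Painlev\'e--Kuratowski convergence coincide. The assumption $b'(0) = 0$ is crucial: it gives $\epsilon(\tau) := b(\tau) - b(0) = \tfrac{\tau^2}{2}b''(0) + o(\tau^2)$, so $\epsilon(\tau) = O(\tau^2)$ and hence $\tau w$ dominates $\epsilon(\tau)$ for any $w \neq 0$ and small $\tau$. Throughout, $\Gamma_0(\R)$-membership of each candidate limit is transparent from the explicit formulas (affine, or affine on a half-line with $+\infty$ elsewhere, or an indicator of $\{0\}$ shifted by a real constant).

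Cases (i) and (iii) are straightforward. When $x > b(0)$ (resp.\ $x < b(0)$), continuity of $b$ ensures that $x - b(\tau)$ and $x + \tau w - b(\tau)$ keep the sign of $x - b(0)$ for every bounded $w$ and sufficiently small $\tau$. Stripping the absolute values yields $\Delta^2_\tau f(x|v)(w) = \pm (a(\tau) - a(0)) w / \tau$ with $v = \pm a(0)$ of matching sign, which converges uniformly on bounded sets to $\pm a'(0) w$; uniform convergence of a continuous family implies Mosco convergence at once.

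The substantive case is (ii), with $x = b(0)$. For every $w > 0$ the inequality $\tau w > \epsilon(\tau)$ holds for small $\tau$, so $|\tau w - \epsilon(\tau)| = \tau w - \epsilon(\tau)$, and setting $\epsilon^+(\tau) := \max(0,\epsilon(\tau))$ one finds
$$ \Delta^2_\tau f(x|v)(w) = \frac{(a(\tau) - v) w}{\tau} - \frac{2 a(\tau) \epsilon^+(\tau)}{\tau^2} . $$
Since $\epsilon^+(\tau)/\tau^2 \to \tfrac{1}{2}(b''(0))^+$, the pointwise limit is the finite value $a'(0) w - a(0)(b''(0))^+$ exactly when $v = a(0)$, and $+\infty$ otherwise. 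A symmetric computation on $w < 0$ using $\max(0, -\epsilon(\tau))/\tau^2 \to \tfrac{1}{2}(-b''(0))^+$ produces the companion formulas. On each open half-line the convergence is uniform on compacts, so both Mosco conditions are immediate there.

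The main obstacle, as I expect, is the boundary point $w = 0$: the trivial value $\Delta^2_\tau f(x|v)(0) = 0$ does not match the target (the phenomenon flagged by Example~\ref{excontreex2}). For the Mosco-sup direction I would supply an explicit recovery sequence approaching $0$: $w_\tau = \tau$ in subcase (a), $w_\tau = -\tau$ in subcase (c), and $w_\tau = \tfrac{\tau}{2} b''(0)$ in subcase (b); substitution in the formulas above yields $\limsup \Delta^2_\tau f(x|v)(w_\tau) = d^2_e f(x|v)(0)$. For the Mosco-inf direction I would invoke the variational identity $|y| \geq \sigma y$ valid for any $\sigma \in [-1,1]$, applied with $y = \tau w_\tau - \epsilon(\tau)$ and $\sigma = v/a(0) \in [-1,1]$. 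After dividing by $\tau^2$, the $w_\tau$-dependent term factors through $v w_\tau \cdot \frac{a(\tau)-a(0)}{a(0) \tau} \to 0$ and the remaining part yields the $w_\tau$-uniform lower bound
$$ \liminf_{\tau \to 0}\, \Delta^2_\tau f(x|v)(w_\tau) \geq -\frac{v\, b''(0) + a(0)\, |b''(0)|}{2} , $$
which a direct case-by-case check shows coincides with $d^2_e f(x|v)(0)$ in each of subcases (a), (b), (c), completing the verification.
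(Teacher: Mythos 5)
Your route is the same as the paper's: compute the second-order difference quotient in closed form on the two sides of the kink $w=\frac{b(\tau)-x}{\tau}$ and read off the epi-limit. (The paper in fact stops at the displayed two-branch formula and declares that the lemma follows, so your explicit verification of the two Mosco conditions, in particular the clean lower bound obtained from $|y|\geq \sigma y$ with $\sigma = v/a(0)$, is a genuine completion of what the paper leaves to the reader.) Cases (i), (iii), the identification of the limits on the open half-lines in case (ii), and the liminf estimate at $w=0$ are all correct.

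There is, however, one step that fails as written: the recovery sequences $w_\tau=\tau$ (subcase (a)) and $w_\tau=-\tau$ (subcase (c)). Substituting $w_\tau=\tau$ into the branch $\frac{(a(\tau)-v)w}{\tau}-\frac{2a(\tau)\epsilon^+(\tau)}{\tau^2}$ presupposes $\tau\geq \frac{\epsilon(\tau)}{\tau}\sim\frac{\tau}{2}b''(0)$, which is false for small $\tau$ whenever $b''(0)>2$. In that regime $w_\tau=\tau$ lies on the other side of the kink, the quotient equals $-(a(\tau)+a(0))-\frac{2a(\tau)(-\epsilon(\tau))^+}{\tau^2}\to -2a(0)$, and since the target is $-a(0)(b''(0))^+=-a(0)b''(0)<-2a(0)$, the required inequality $\limsup_\tau \Delta^2_\tau f(x|v)(w_\tau)\leq d^2_e f(x|v)(0)$ is violated along your sequence (symmetrically for subcase (c) when $b''(0)<-2$). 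The fix is immediate: take $w_\tau=\frac{\tau}{2}b''(0)$ (the choice you already make in subcase (b)) or $w_\tau=\frac{b(\tau)-b(0)}{\tau}$ in all three subcases; one checks that along such a sequence both branches of the quotient converge to the same value, namely $-\frac{v\,b''(0)+a(0)|b''(0)|}{2}$, which is the announced value of $d^2_e f(x|v)(0)$, so no regime discussion is needed. With this replacement your argument is complete.
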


\begin{proof}
	One can easily compute that
	$$ \Delta^2_\t f(x|v)(w) = \left\lbrace 
	\begin{array}{lcr}
	\left( \dfrac{a(\t)-v}{\t} \right) w - 2 a(\t) \left( \dfrac{b(\t)-x}{\t^2} \right)^+ & \text{if} & w \geq \dfrac{b(\t)-x}{\t}, \\[10pt]
	- \left( \dfrac{a(\t)+v}{\t} \right) w - 2 a(\t) \left( \dfrac{x-b(\t)}{\t^2} \right)^+ & \text{if} & w \leq \dfrac{b(\t)-x}{\t},
	\end{array}
	\right. $$
	for all $\t > 0$, all $(w,x) \in \R \times \R$ and all $v \in \partial f(0,x)$. Lemma~\ref{lemfin} follows.
\end{proof}

\begin{remark}\normalfont
	Note that Lemma~\ref{lemfin} encompasses the result of Example~\ref{excontreex2}. In Example~\ref{excontreex}, all assumptions of Lemma~\ref{lemfin} are satisfied except $b'(0) = 0$.
\end{remark}

From Proposition~\ref{propgenefin} and Lemma~\ref{lemfin}, we obtain that, for all $t \geq 0$, the convex minimization problem~$(\mathcal{M}_2(t))$ admits a unique solution denoted by $y(t)$. Moreover, we know that $y : \R_+ \to \H$ is differentiable at $t=0$ and that $y'(0)$ is solution of the convex minimization problem

\vspace{-0.19cm}\noindent
\begin{minipage}{.18\linewidth}
	\begin{equation*}
	\!\!\! \!\!\! \!\!\! \!\!\! \!\!\! \!\!\! (\mathcal{M}_2'(0))  
	\end{equation*}
\end{minipage}
\begin{minipage}{.8\linewidth}
	\begin{equation*}
	\argmin_{x \in \R} \Big[ d^2_e f \Big( y(0) | d(0) - c(0)y(0) \Big) (x) + \dfrac{c(0)}{2} x^2 + (c'(0)y(0)-d'(0))x \Big] . 
	\end{equation*}
\end{minipage}\vspace{0.35cm}

At this step recall that $d (0)-c(0)y(0) \in \partial f ( 0 , y(0))$. Thus we get three cases:
\begin{enumerate}
	\item[{\rm (i)}] if $y(0) > b(0)$, then $d(0)-c(0)y(0)=a(0)$;
	\item[{\rm (ii)}] if $y(0) = b(0)$, then $d(0)-c(0)y(0) \in [-a(0),a(0)]$;
	\item[{\rm (iii)}] if $y(0) < b(0)$, then $d(0)-c(0)y(0)=-a(0)$.
\end{enumerate}
Finally, from Lemma~\ref{lemfin} and $(\mathcal{M}'_2(0))$, we obtain that
$$ y'(0) = \left\lbrace 
\begin{array}{ccl}
\left( \dfrac{d-a}{c} \right)' (0) & \text{if} & y(0) > b(0) , \\[10pt]
\left( \left( \dfrac{d-a}{c} \right)' (0) \right)^+ & \text{if} & y(0) = b(0) = \dfrac{d(0)-a(0)}{c(0)} , \\[10pt]
0 & \text{if} & y(0) = b(0) \in \left( \dfrac{d(0)-a(0)}{c(0)} , \dfrac{d(0)+a(0)}{c(0)} \right) , \\[10pt]
\left( \left( \dfrac{d+a}{c} \right)' (0) \right)^- & \text{if} & y(0) = b(0) = \dfrac{d(0)+a(0)}{c(0)} , \\[10pt]
\left( \dfrac{d+a}{c} \right)' (0) & \text{if} & y(0) < b(0).
\end{array}
\right. $$
The above results are perfectly coherent since it can be proved that the unique solution~$y(t)$ of the parameterized convex minimization problem~$(\mathcal{M}_2(t))$ is given by
$$ y(t) = \left\lbrace 
\begin{array}{ccl}
\dfrac{d(t)-a(t)}{c(t)} & \text{if} & b(t) \leq \dfrac{d(t)-a(t)}{c(t)} , \\[10pt]
b(t)  & \text{if} & \dfrac{d(t)-a(t)}{c(t)} < b(t) < \dfrac{d(t)+a(t)}{c(t)}, \\[10pt]
\dfrac{d(t)+a(t)}{c(t)} & \text{if} & \dfrac{d(t)+a(t)}{c(t)} \leq b(t),
\end{array}
\right. $$
for all $t \geq 0$.

\subsection{Applications to parameterized smooth convex minimization problems with inequality constraints}\label{secappl3}
Let $m$, $d \in \N^*$. In the whole section we denote by $\R_-$ the set of all nonpositive real numbers and by $\R^d_- := \R_- \times \ldots \times \R_-$.

\medskip

In this section we focus on a general finite-dimensional parameterized convex minimization problem with inequality constraints given by
$$
\argmin_{\substack{ ~\\ x \in \R^m \\[2pt] F (t,x) \in \R^d_- }} g(t,x) 
$$
where $F := (F_i)_{i=1,\ldots,d} : \R_+ \times \R^m \to \R^d$ and $g : \R_+ \times \R^m \to \R$ are smooth functions.

\medskip

Precisely, under some appropriate assumptions, we will prove in Proposition~\ref{propfinalfinal} that the above minimization problem admits, for all $t \geq 0$, a unique solution denoted by $y(t)$, and that the function $y : \R_+ \to \R^m$ is differentiable at $t=0$ where $y'(0)$ is the unique solution of a convex minimization problem with linear inequality/equality constraints. We refer to Proposition~\ref{propfinalfinal} and Remarks~\ref{remfin} and \ref{remfin2} for details.

\medskip

To this aim, we need to recall first some classical notions and notations. Let $\mathrm{N} ( w )$ (resp. $\mathrm{T} ( w )$) denote the \textit{normal cone} (resp. \textit{tangent cone}) to $\R^d_-$ at some $w \in \R^d_-$. Recall that $\mathrm{N} ( w )$ and $\mathrm{T} ( w )$ are both closed convex cones of $\R^d$ containing $0_{\R^d}$, and that they are mutually polar. Let $\delta_{\mathrm{Y}} \in \Gamma_0(\R^d)$ (resp. $\sigma_{\mathrm{Y}}  \in \Gamma_0(\R^d)$) denote the \textit{indicator function} (resp. \textit{support function}) of a nonempty closed convex subset $\mathrm{Y} \subset \R^d_-$. We refer to standard books like~\cite{JBHU,Penot,RockWets} and references therein for details. 

\medskip

If $F$ is of class $C^2$ on $\R_+ \times \R^m$, we denote by $D^2 F(t,x)$ the classical second-order Fr\'echet differential of $F$ at $(t,x) \in \R_+ \times \R^m$. We also introduce the following sets:
$$ \mathrm{K} ( y | v ) := \{ x \in \R^m \mid  \nabla_x F(0,y) x \in \mathrm{T} ( F(0,y) ) \text{ and } \langle x , v \rangle = 0 \} $$
and
$$ \mathrm{Y} ( y | v ) := \{ w \in \R^d \mid w \in \mathrm{N} ( F(0,y) )  \text{ and }  \nabla_x F(0,y)^\top w = v \} ,$$
for all $(y,v) \in \R^m \times \R^m$ such that $F(0,y) \in \R^d_-$.

\medskip

We are now in position to state and prove the main result of this section.

\begin{proposition}\label{propfinalfinal}
	Let $m$, $d \in \N^*$. Let $F := (F_i)_{i=1,\ldots,d} : \R_+ \times \R^m \to \R^d$ be such that $F_i \in \Gamma_0(\cdot,\R^m)$ for every $i=1,\ldots,d$. We assume that $C(t) := \{ x \in \R^m \mid F(t,x) \in \R^d_- \}$ is not empty for all $t \geq 0$. Let $g : \R_+ \times \R^m \to \R$ be such that $g(t,\cdot)$ is differentiable on $\R^m$ with $\nabla_x g(t,\cdot) \in \mathcal{A}(\R^m)$ for all $t \geq 0$. Then, for all $t \geq 0$, the parameterized convex minimization problem with inequality constraints
	
	\vspace{-0.19cm}\noindent
	\begin{minipage}{.1\linewidth}
		\begin{equation*}
		(\mathcal{M}_3 (t))
		\end{equation*}
	\end{minipage}
	\begin{minipage}{.8\linewidth}
		\begin{equation*}
		\argmin_{\substack{ ~\\ x \in \R^m \\[2pt] F (t,x) \in \R^d_- }} g(t,x)  
		\end{equation*}
	\end{minipage}\vspace{0.35cm}	
	
	admits a unique solution denoted by $y(t)$. If moreover the following assumptions are satisfied:
	\begin{enumerate}
		\item[{\rm (i)}] $\nabla_x g \in \mathcal{A}_{\mathrm{unif}}(\cdot,\R^m)$;
		\item[{\rm (ii)}] $\nabla_x g$ is of class $C^1$ on $\R_+ \times \R^m$;
		\item[{\rm (iii)}] $F$ is of class $C^2$ on $\R_+ \times \R^m$;
		\item[{\rm (iv)}] $\nabla_t F(0,y(0)) = 0_{\R^d}$;
		\item[{\rm (v)}] $y(0) \in C(t)$ for all $t \geq 0$;
		\item[{\rm (vi)}] $\Vert \nabla_x F(t,y(0))^\top w \Vert_{\R^m} \geq \alpha$ for all $w \in \mathrm{N} ( F(t,y(0)) )$ with $\Vert w \Vert_{\R^d} = 1$ and all $t \geq 0$, for some $\alpha > 0$;
	\end{enumerate}
	then $y : \R_+ \to \R^m$ is differentiable at $t=0$ and $y'(0)$ is the unique solution of the convex minimization problem with linear inequality/equality constraints given by
	
	\vspace{-0.19cm}\noindent
	\begin{minipage}{.1\linewidth}
		\begin{equation*}
		(\mathcal{M}_3 '(0))
		\end{equation*}
	\end{minipage}
	\begin{minipage}{.8\linewidth}
		\begin{multline*}
		\argmin_{\substack{ ~\\ x \in \R^m \\[3pt] x \in \mathrm{K} ( y(0) | v_0) }} \left[ \dfrac{1}{2} \langle \nabla^2_{xx} g(0, y(0) ) (x) , x \rangle + \langle \nabla^2_{tx} g (0 , y(0) ) , x \rangle \right. \\
		\left. + \sigma_{\mathrm{Y} (y(0) | v_0)} \left( \dfrac{1}{2} D^2 F(0,y(0)) (1,x) \right) \right] ,
		\end{multline*}
	\end{minipage}\vspace{0.35cm}		
	
	%
	where $v_0 := -\nabla_x g(0,y(0))$.
\end{proposition}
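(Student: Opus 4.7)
The plan is to reduce $(\mathcal{M}_3(t))$ to an instance of the unconstrained problem $(\mathcal{M}_1(t))$ treated in Proposition~\ref{propgenefin}, by setting $f(t,x) := \delta_{C(t)}(x)$ and taking $\ell \equiv 0$. First I would note that each $F_i(t,\cdot) \in \Gamma_0(\R^m)$ makes its sublevel set $\{x \in \R^m \mid F_i(t,x) \leq 0\}$ closed convex, so their intersection $C(t)$ is closed convex (nonempty by hypothesis), which gives $f \in \Gamma_0(\cdot,\R^m)$. The strong convexity and coercivity of $g(t,\cdot)$, which follow from $\nabla_x g(t,\cdot) \in \mathcal{A}(\R^m)$, then deliver existence and uniqueness of $y(t)$. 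Hypotheses (i)--(iii) of Proposition~\ref{propgenefin} transfer immediately from (i)--(ii) here (with $\ell \equiv 0$), so only the twice-epi-differentiability conditions (iv)--(v) of Proposition~\ref{propgenefin} remain to be verified at $t=0$.

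For the first-order step, I would exploit assumption (vi) at $t=0$ as a uniform Robinson-type (Mangasarian--Fromovitz) constraint qualification, producing the classical chain-rule representation
$$\partial f(0,\cdot)(y(0)) = \nabla_x F(0,y(0))^\top\, \mathrm{N}(F(0,y(0)))$$
for the subdifferential of $f(0,\cdot) = \delta_{\R^d_-} \circ F(0,\cdot)$ at $y(0)$. The optimality of $y(0)$ in $(\mathcal{M}_3(0))$ then yields $v_0 = -\nabla_x g(0,y(0)) \in \partial f(0,y(0))$, and hence a Lagrange multiplier $\lambda_0 \in \mathrm{N}(F(0,y(0)))$ with $\nabla_x F(0,y(0))^\top \lambda_0 = v_0$, i.e., $\lambda_0 \in \mathrm{Y}(y(0)|v_0)$. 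This delivers the first part of hypothesis (iv) of Proposition~\ref{propgenefin}.

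The hard part will be computing $d^2_e f(y(0)|v_0)$ for $f(t,\cdot) = \delta_{\R^d_-} \circ F(t,\cdot)$, which requires a $t$-dependent extension of the Poliquin--Rockafellar chain rule for the second epi-derivative of indicators of constraint sets. Assumption (iv), $\nabla_t F(0,y(0)) = 0_{\R^d}$, combined with assumption (v), $F(t,y(0)) \in \R^d_-$ for all $t$ (so that $f(\tau,y(0)) = 0$), enables the clean Taylor expansion
$$F(\tau, y(0) + \tau w) = F(0,y(0)) + \tau \nabla_x F(0,y(0)) w + \tfrac{\tau^2}{2} D^2 F(0,y(0))(1,w) + o(\tau^2).$$
Substituting into
$$\Delta^2_\tau f(y(0)|v_0)(w) = \frac{\delta_{\R^d_-}\bigl(F(\tau, y(0) + \tau w)\bigr) - \tau \langle v_0, w\rangle}{\tau^2},$$
the leading $\tau$-order constraint, together with $v_0 = \nabla_x F(0,y(0))^\top \lambda_0$ and the complementary slackness $\langle \lambda_0, F(0,y(0))\rangle = 0$, would force $w$ to lie in the critical cone $\mathrm{K}(y(0)|v_0)$ in the Mosco limit; the $\tau^2$-quadratic remainder, dualised through $\sigma_{\mathrm{Y}(y(0)|v_0)}(z) = \sup_{\lambda \in \mathrm{Y}(y(0)|v_0)} \langle \lambda, z\rangle$ (with the compactness of $\mathrm{Y}(y(0)|v_0)$ needed for Mosco convergence of the epigraphs guaranteed by the uniform CQ (vi)), would produce the support-function term, yielding
$$d^2_e f(y(0)|v_0)(x) = \delta_{\mathrm{K}(y(0)|v_0)}(x) + \sigma_{\mathrm{Y}(y(0)|v_0)}\!\left(\tfrac{1}{2} D^2 F(0,y(0))(1,x)\right).$$
Since $\mathrm{K}(y(0)|v_0)$ is a closed convex cone containing $0$ and $x \mapsto D^2 F(0,y(0))(1,x)$ is continuous, the right-hand side lies in $\Gamma_0(\R^m)$, verifying hypothesis (v) of Proposition~\ref{propgenefin}.

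Finally, applying Proposition~\ref{propgenefin} would deliver the differentiability of $y$ at $t=0$ and characterise $y'(0)$ as the unique minimiser of the unconstrained convex program it produces. Inserting the explicit formula for $d^2_e f(y(0)|v_0)$ just obtained, the indicator part $\delta_{\mathrm{K}(y(0)|v_0)}$ converts the unconstrained program into the constrained one $(\mathcal{M}_3'(0))$ with feasible set $\mathrm{K}(y(0)|v_0)$, while the remaining terms match verbatim those displayed in the statement, completing the proof.
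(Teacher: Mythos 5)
Your overall architecture is exactly the paper's: set $f(t,\cdot):=\delta_{C(t)}=\delta_{\R^d_-}\circ F(t,\cdot)$, $\ell\equiv 0$, check $f\in\Gamma_0(\cdot,\R^m)$, and reduce everything to Proposition~\ref{propgenefin}; the first-order chain rule giving $v_0\in\nabla_x F(0,y(0))^\top\mathrm{N}(F(0,y(0)))$ and hence $\mathrm{Y}(y(0)\vert v_0)\neq\emptyset$ is also as in the paper, and the formula you write down for $d^2_e f(y(0)\vert v_0)$ is the correct one. But the step you yourself flag as ``the hard part'' is precisely where the proposal stops being a proof: you assert that the Taylor expansion ``would force'' the indicator of $\mathrm{K}(y(0)\vert v_0)$ and ``would produce'' the support-function term in the Mosco limit, which is a description of the answer, not an argument. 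Epi-convergence requires two separate inequalities, and neither follows from a pointwise Taylor expansion in a fixed direction $w$. For the lower bound you must control $\liminf_\tau\Delta^2_\tau f(y(0)\vert v_0)(z_\tau)$ along \emph{arbitrary} sequences $z_\tau\to z$; the paper does this by rewriting $\Delta^2_\tau f(y(0)\vert v_0)(z_\tau)$ as $\Delta^2_\tau\delta_{\R^d_-}(F(0,y(0))\vert w)(\tilde\Delta_\tau F)$ plus a correction term, for any $w\in\mathrm{Y}(y(0)\vert v_0)$, and then invoking the known twice epi-differentiability of $\delta_{\R^d_-}$ together with $\tilde\Delta_\tau F\to\nabla_x F(0,y(0))z$ (this is where (iv) enters). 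Your proposal contains no mechanism for passing from the smooth expansion of $F$ to the epigraphical limit of the composite indicator.

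The more serious omission is the recovery sequence. For $z\in\mathrm{K}(y(0)\vert v_0)$ the naive choice $z_\tau:=z$ generally gives $\Delta^2_\tau f(y(0)\vert v_0)(z)=+\infty$, because first-order tangency $\nabla_x F(0,y(0))z\in\mathrm{T}(F(0,y(0)))$ does not make $y(0)+\tau z$ feasible for $C(\tau)$: the second-order terms of $F$ can violate the constraints. The paper's construction is genuinely delicate: it first produces, via the dual characterization of $\sigma_{\mathrm{Y}(y(0)\vert v_0)}$ and a maximizer $\overline{w}$, a second-order correction $\overline{z}$ with $-\langle v_0,\overline{z}\rangle=\langle\overline{w},D^2F(0,y(0))(1,z)\rangle$ and $\nabla_x F(0,y(0))\overline{z}+D^2F(0,y(0))(1,z)\in\mathrm{T}(F(0,y(0)))$; it then uses the metric regularity supplied by assumption (vi) (not merely the compactness of $\mathrm{Y}(y(0)\vert v_0)$, which is the only use of (vi) you mention at second order) to project $y(0)+tz+\tfrac{t^2}{2}\overline{z}$ onto $C(t)$ and obtain a curve $\xi(t)\in C(t)$ with $\xi'(0)=z$, $\xi''(0)=\overline{z}$, from which $z_\tau:=(\xi(\tau)-\xi(0))/\tau$ attains exactly the value $\tfrac12\langle\overline{w},D^2F(0,y(0))(1,z)\rangle=\sigma_{\mathrm{Y}(y(0)\vert v_0)}(\tfrac12 D^2F(0,y(0))(1,z))$ in the limit, using (v) and $\langle z,v_0\rangle=0$. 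Without this construction the upper epi-limit inequality is unproved, and with it the support function appears as an exact limit rather than merely an upper bound. You should supply these two arguments (or an equivalent $t$-dependent chain rule with proof) before the appeal to Proposition~\ref{propgenefin} can be made.
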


\begin{proof}
	Let us prove the first part of Proposition~\ref{propfinalfinal}. In order to apply the first part of Proposition~\ref{propgenefin} with $\H = \R^m$, we introduce $f : \R_+ \times \R^m \to \R \cup \{ +\infty \}$ defined by $f(t,x) := \delta_{C(t)} (x) = \delta_{\R^d_-} (F(t,x))$ for all $(t,x) \in \R_+ \times \R^m$. Thus, the minimization problem~$(\mathcal{M}_3(t))$ can be rewritten as 
	$$
	\argmin_{x \in \R^m } \Big[ f(t,x) + g(t,x) \Big].
	$$
	Since $F_i \in \Gamma_0(\cdot,\R^m)$ for every $i=1,\ldots,d$ and since $C(t)$ is not empty for all $t \geq 0$, one can easily deduce that $f \in \Gamma_0 (\cdot,\R^m)$ and thus the first part of Proposition~\ref{propgenefin} can be applied.
	
	\medskip
	
	Now let us prove the second part of Proposition~\ref{propfinalfinal}. In order to apply the second part of Proposition~\ref{propgenefin} with $\H = \R^m$, we only need to prove that $f$ is twice epi-differentiable at $y(0)$ for $v_0$ and that $d^2_e f (y(0),v_0) \in \Gamma_0(\R^m)$. To do so, we will essentially adapt the proof of \cite[Theorem~13.14 p.594]{RockWets} to the $t$-dependent framework. 
	
	\medskip
	
	First of all, we need to state some assertions:
	\begin{itemize}
		\item Note that $\mathrm{Y} ( y(0) | v_0 )$ is a closed convex subset of $\R^d$. Moreover, since $v_0 = -\nabla_x g(0,y(0)) \in \partial f(0,y(0)) = \nabla_x F(0,y(0))^\top \mathrm{N} ( F(0,y(0)) )$, we deduce that $\mathrm{Y} ( y(0) | v_0 )$ is nonempty. Finally, since $\Vert \nabla_x F(0,y(0))^\top x \Vert_{\R^m} \geq \alpha$ for all $x \in \mathrm{N} ( F(0,y(0)) )$ with $\Vert x \Vert_{\R^d} = 1$, one can easily prove that $\mathrm{Y} ( y(0) | v_0 )$ is compact. In particular, it follows that $\sigma_{\mathrm{Y} ( y(0) | v_0 ) } (x) = \max_{w \in \mathrm{Y} ( y(0) | v_0 )} \langle w , x \rangle_{\R^d} < +\infty$ for all $x \in \R^d$.
		\item Note that $\mathrm{K} ( y(0) | v_0 )$ is a closed convex cone of $\R^m$ containing $0_{\R^m}$. Then we can easily deduce that the function $\varphi : \R^m \to \R \cup \{ +\infty \}$, defined by 
		$$ \varphi (x) := \delta_{ \mathrm{K} ( y(0) | v_0 )} (x) + \sigma_{\mathrm{Y} (y(0) | v_0)} \left( \dfrac{1}{2} D^2 F(0,y(0)) (1,x) \right) ,$$ 
		for all $x \in \R^m$, is such that $\varphi \in \Gamma_0( \R^m)$. In particular we used the fact that each component of some $w \in \mathrm{Y} (y(0) | v_0)$ are nonnegative.
		\item For all $w \in \mathrm{Y} ( y(0) | v_0 )$, $\delta_{\R^d_-}$ is twice epi-differentiable at $F(0,y(0))$ for $w$ and it holds that
		$$ d^2_e \delta_{\R^d_-} (F(0,y(0)) | w ) ( \nabla_x F(0,y(0)) x) = \delta_{ \mathrm{K} ( y(0) | v_0 )} (x) ,$$
		for all $x \in \R^m$. We refer to \cite[Exercice~13.17 p.600]{RockWets} for details. In particular, note that the above formula is independent of the choice of $w \in \mathrm{Y} ( y(0) | v_0 )$.
	\end{itemize}
	Our aim from now is to prove that $f$ is twice epi-differentiable at $y(0)$ for $v_0$ with $d^2_e f (y(0)|v_0) = \varphi$ from the characterization of epi-convergence recalled in Proposition~\ref{propcharact} (with $\H = \R^m$ is finite-dimensional). Firstly, let $z \in \R^m$ and $(z_\t)_{\t > 0} \to z$. Adapting the proof of \cite[Theorem~13.14 p.594]{RockWets} to the $t$-dependent framework, one can easily show that
	$$ \Delta^2_\t f(y(0)|v_0)(z_\t) = \Delta^2_\t \delta_{\R^d_-} ( F(0,y(0)) | w ) ( \tilde{\Delta}_\t F ) + \left\langle w , \frac{\t \tilde{\Delta}_\t F - \t \nabla_x F(0,y(0)) z_\t }{\t^2} \right\rangle,  $$
	for any $w \in \mathrm{Y} ( y(0) | v_0 )$, where $\tilde{\Delta}_\t F := \frac{F(\t,y(0)+\t z_\t)-F(0,y(0))}{\t}$. Since $\nabla_t F(0,y(0)) = 0_{\R^d}$, one has $\tilde{\Delta}_\t F \to \nabla_x F(0,y(0)) z$. Moreover, since $\delta_{\R^d_-}$ is twice epi-differentiable at $F(0,y(0))$ for $w$ and since $\frac{\t \tilde{\Delta}_\t F - \t \nabla_x F(0,y(0)) z_\t }{\t^2} \to \frac{1}{2} D^2 F(0,y(0)) (1,z)$, we deduce that 
	\begin{multline*}
	\liminf_{\t \to 0} \Delta^2_\t f(y(0)|v_0)(z_\t) \geq d^2_e \delta_{\R^d_-} (F(0,y(0)) | w ) ( \nabla_x F(0,y(0)) z) \\
	+ \frac{1}{2} \langle w , D^2 F(0,y(0)) (1,z) \rangle .
	\end{multline*}	
	Since $d^2_e \delta_{\R^d_-} (F(0,y(0)) | w ) ( \nabla_x F(0,y(0)) z) =  \delta_{ \mathrm{K} ( y(0) | v_0 )} (z)$ and since the last inequality is satisfied for any $w \in \mathrm{Y} ( y(0) | v_0 )$ compact, we get that $\liminf_{\t \to 0} \Delta^2_\t f(y(0)|v_0)(z_\t) \geq \varphi (z)$.
	
	\medskip
	
	Secondly, our objective is now to exhibit a sequence $(z_\t)_{\t > 0} \to z$ that satisfies $\limsup_{\t \to 0} \Delta^2_\t f(y(0)|v_0)(z_\t) \leq \varphi (z)$. If $\varphi (z) = +\infty$, nothing has to be proved. Thus we assume from now that $\varphi (z) < +\infty$, that is, $z \in \mathrm{K} ( y(0) | v_0 )$. It is sufficient to construct a sequence $(z_\t)_{\t > 0} \to z$ such that 
	$$ \lim_{\t \to 0} \Delta^2_\t f(y(0)|v_0)(z_\t) = \frac{1}{2} \langle \overline{w} , D^2 F(0,y(0))(1,z) \rangle $$ 
	where $ \overline{w} \in \argmax_{w \in \mathrm{Y} ( y(0) | v_0 )} \langle w , D^2 F(0,y(0))(1,z) \rangle $. From \cite[Theorem~11.42 and Example~11.43 p.506]{RockWets}, we deduce the existence of $\overline{z} \in \R^m$ such that $-\langle v_0 , \overline{z} \rangle = \langle \overline{w} , D^2 F(0,y(0))(1,z) \rangle$ and $\eta := \nabla_x F(0,y(0)) \overline{z} + D^2 F(0,y(0))(1,z) \in \mathrm{T} (F(0,y(0)))$. Since $ \eta \in \mathrm{T} (F(0,y(0)))$ and $\nabla_x F(0,y(0)) z \in \mathrm{T} (F(0,y(0)))$ and from \cite[Theorem~13.11 and Proposition~13.12 p.591-592]{RockWets}, there exists $\lambda : \R_+ \to \R^d$ such that $\lambda (t) \in \R^d_-$ for all $t \geq 0$ and $\lambda(0) = F(0,y(0))$, $\lambda ' (0) = \nabla_x F(0,y(0)) z$ and $\lambda ''(0) = \eta$. On the other hand, from Assumption~(vi) and from the metric regularity property (see \cite[Theorem~9.43 and Example~9.44 p.387-388]{RockWets}), we know that
	$$ \mathrm{d} \left( y(0)+tz+ \dfrac{t^2}{2} \overline{z} , C(t) \right) \leq \frac{1}{\alpha} \mathrm{d} \left( F \left( t, y(0)+tz+ \dfrac{t^2}{2}  \overline{z} \right) , \R^d_- \right), $$
	for sufficiently small $t \geq 0$. Thus we deduce that
	\begin{multline*}
	\mathrm{d} \left( \overline{z} , \dfrac{C(t)- y(0)-t z}{t^2 / 2} \right) \leq \frac{1}{\alpha} \mathrm{d} \left( \tilde{\Delta}^2_t F  , \dfrac{\R^d_- - F(0,y(0)) - t \nabla_x F(0,y(0)) z}{t^2 /2} \right) \\
	\leq \frac{1}{\alpha} \left\Vert \tilde{\Delta}^2_t F - \dfrac{\lambda(t) - \lambda(0)-t \lambda '(0) }{t^2 / 2} \right\Vert_{\R^d}
	\end{multline*}
	for sufficiently small $t \geq 0$, where $\tilde{\Delta}^2_t F := \frac{F(t,y(0)+tz+ \frac{t^2}{2} \overline{z}) - F(0,y(0)) - t \nabla_x F(0,y(0)) z}{t^2 / 2}$. Since $\tilde{\Delta}^2_t F \to \eta = \lambda ''(0)$, we deduce that there exists $\xi : \R_+ \to \R^m$ such that $\xi (t) \in C(t)$ for all $t \geq 0$ and $\xi (0) = y(0)$, $\xi '(0) = z$ and $\xi ''(0) = \overline{z}$. Finally, we define $z_\t := \frac{\xi (\t) - \xi (0)}{\t} \to \xi '(0) = z$. Since $F(t,y(0)) \in \R^d_-$ and $F(t,\xi(t)) \in \R^d_-$ for all $t \geq 0$ and since $\langle z , v_0 \rangle =0$, we get that
	$$ \Delta^2_\t f(y(0)|v_0)(z_\t) = - \left\langle v_0 , \dfrac{z_\t - z}{\t} \right\rangle \longrightarrow - \dfrac{1}{2} \langle v_0 , \overline{z} \rangle = \dfrac{1}{2} \langle \overline{w} , D^2 F(0,y(0))(1,z) \rangle , $$
	which concludes the proof.
\end{proof}

\begin{remark}\label{remfin}\normalfont
	Note that the constraints $x \in \mathrm{K} (y(0)|v_0)$ in the convex minimization problem~$(\mathcal{M}'_3(0))$ can be written as linear inequality/equality constraints.
\end{remark}

\begin{remark}\label{remfin2}\normalfont
	Let us assume that all hypotheses of Proposition~\ref{propfinalfinal} are satisfied. In that case, recall that $\mathrm{Y} ( y(0) | v_0 )$ is a nonempty compact convex subset of $\R^d$ and thus
	$$ \sigma_{\mathrm{Y} ( y(0) | v_0 )} \left( \dfrac{1}{2} D^2 F(0,y(0)) (1,x) \right) = \dfrac{1}{2} \max_{w \in \mathrm{Y} ( y(0) | v_0 )} \langle w , D^2 F(0,y(0)) (1,x) \rangle , $$
	for all $x \in \R^m$. In particular, if $D^2 F(0,y(0)) = 0$, then the above term vanishes. In that particular case, the convex minimization problem~$(\mathcal{M}'_3(0))$ has a quadratic cost with linear inequality/equality constraints (see Remark~\ref{remfin}), while the original parameterized convex minimization problem~$(\mathcal{M}_3(t))$ has a nonlinear cost with nonlinear inequality constraints.
\end{remark}

\section{Additional comments}\label{sec6}

We conclude this paper with some additional comments about the choice of Formula~\eqref{form} and about the twice epi-differentiability of the conjugate function.

\subsection{Comments on the choice of Formula~\eqref{form}}\label{secform}
In the whole paragraph we consider $f  \in \Gamma_0(\cdot,\H)$, $x \in f^{-1} (\cdot , \R)$ and $v \in \partial f(0,x)$. 

\medskip

As mentioned at the beginning of Section~\ref{secmainresults}, the whole paper is based on Attouch's theorems (see Theorems~\ref{thmattouch} and \ref{thmattouch2}) and on Proposition~\ref{prop12}. One can easily see that these results are totally independent of the definition of $\Delta^2_\t f(x|v)$, provided that $\Delta^2_\t f(x|v) \in \Gamma_0(\H)$ and $ \partial ( \Delta^2_\t f (x | v) ) = \Delta_\t (\partial f) (x | v)$. 

\medskip

As a consequence it is clear that one could easily adapt the whole paper with a different definition of $\Delta^2_\t f(x|v)$, as long as $\Delta^2_\t f(x|v) \in \Gamma_0(\H)$ and $ \partial ( \Delta^2_\t f (x | v) ) = \Delta_\t (\partial f) (x | v)$. For example, instead of Formula~\eqref{form}, one could consider
\begin{equation}\label{form2}
\Delta^2_\t f (x | v)(w) := \dfrac{f (\t,x+\t w) - f(0,x) - \t \langle v , w \rangle}{ \t^2 },
\end{equation}
or
\begin{equation}\label{form3}
\Delta^2_\t f (x | v)(w) := \dfrac{f (\t,x+\t w) - f(\t,x) - \t \langle v(\t) , w \rangle}{ \t^2 },
\end{equation}
where $v(t) \in \partial f(t,x)$ for all $t \geq 0$. This is the reason why we think that it is important to justify our choice of Formula~\eqref{form}. Actually this choice was natural and has prevailed with respect to our initial motivation and followed from the case where $f$ is smooth.

\medskip

Indeed let us assume that $f$ is smooth on $\R^+ \times \H$ and let $y, z : \R_+ \to \H$ be two given functions differentiable at $t=0$ that are related by the expression
$$ y(t) = \prox_{f(t,\cdot)} (z(t)) = \prox_{\I, f(t,\cdot)} (z(t)) , $$
for all $t \geq 0$. We deduce that $y(t) + \nabla_x f(t,y(t)) = z(t)$ for all $t \geq 0$ and thus $y'(0)+ \nabla^2_{tx} f(0,y(0)) + \nabla^2_{xx} f(0,y(0))(y'(0)) = z'(0)$. Finally we obtain that $ y'(0) = ( \I + \partial \varphi_{y(0)} )^{-1} (z'(0)) = \prox_{\varphi_{y(0)}} (z'(0)) $ where $\varphi_x : \H \to \R$ is defined by
$$ \varphi_x (w) := \dfrac{1}{2} \left\langle \nabla^2_{xx} f (0,x)(w), w \right\rangle + \left\langle \nabla^2_{tx} f(0,x), w \right\rangle , $$
for all $w \in \H$. Hence it was natural to choose a general expression of $\Delta^2_\t f(x|v)$ that converges pointwise on $\H$ to $\varphi_x$ (up to an additive constant) whenever $f$ is smooth. Note that Formula~\eqref{form} does (see Remark~\ref{rempointwise}).

\medskip

In contrast, if $f$ is smooth and if $\Delta^2_\t f(x|v)(w)$ is defined as in~\eqref{form2}, then $\Delta^2_\t f(x|v)(w)$ does not converge if $\nabla_t f(0,x) \neq 0$. Similarly, if $f$ is smooth and if $\Delta^2_\t f(x|v)(w)$ is defined as in~\eqref{form3}, then $\Delta^2_\t f(x|v)(w)$ converges to $\frac{1}{2} \langle \nabla^2_{xx} f (0,x)(w), w \rangle$ that is different (even up to an additive constant) from $\varphi_x (w)$ if $\nabla^2_{tx} f(0,x) \neq 0$.

%

\subsection{Comments on the twice epi-differentiability of the conjugate function}\label{secdual}
In the $t$-independent framework, recall that the classical \textit{Fenchel conjugate} $f^* \in \Gamma_0(\H)$ of a function $f \in \Gamma_0(\H)$ is defined by
$$ f^* (v) := \sup_{x \in \H} \{ \langle v , x \rangle - f(x) \} , $$
for all $v \in \H$. Recall the following result stated by U.~Mosco in \cite[Theorem~1]{Mosco}. 

\begin{theorem}\label{thmmosco}
	Let $(f_\t)_{\t > 0}$ be a parameterized family of functions in $\Gamma_0(\H)$ and $f \in \Gamma_0(\H)$. Then, $f = \elim f_\t$ if and only if $f^* = \elim f^*_\t$.
\end{theorem}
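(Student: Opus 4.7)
The plan is to exploit the symmetry of Fenchel conjugation: since $f \in \Gamma_0(\H)$ satisfies $f^{**} = f$ and the same holds for each $f_\t$, it suffices to prove the single implication $f = \elim f_\t \Rightarrow f^* = \elim f_\t^*$; the converse then follows by applying the implication to the family $(f_\t^*)_{\t > 0}$ and $f^*$. Assuming $f = \elim f_\t$, the strategy is to verify for the family $(f_\t^*)_{\t > 0}$ the two conditions of the characterization of M-convergence given by Proposition~\ref{propcharact}: the weak lower bound $\liminf f_\t^*(v_\t) \geq f^*(v)$ along every weakly convergent sequence $(v_\t) \rightharpoonup v$, and the existence of a strongly convergent recovery sequence $(v_\t) \to v$ satisfying $\limsup f_\t^*(v_\t) \leq f^*(v)$.

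For the lower bound, fix any $x \in \dom(f)$. The Mosco convergence $f = \elim f_\t$ yields, via Proposition~\ref{propcharact}, a sequence $(x_\t) \to x$ strongly with $\limsup f_\t(x_\t) \leq f(x)$. The Fenchel inequality provides $f_\t^*(v_\t) \geq \langle v_\t , x_\t \rangle - f_\t(x_\t)$ for every $\t > 0$, and passing to the liminf (using that the weak-strong pairing $\langle v_\t , x_\t \rangle \to \langle v , x \rangle$ since $v_\t \rightharpoonup v$ and $x_\t \to x$) produces $\liminf f_\t^*(v_\t) \geq \langle v , x \rangle - f(x)$. Taking the supremum over $x \in \dom(f)$ yields the desired inequality $\liminf f_\t^*(v_\t) \geq f^*(v)$.

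For the recovery sequence, the idea is to construct it first at points in $\D(\partial f^*) = \{v \in \H : \exists x, \; v \in \partial f(x)\}$ and then extend by density. If $v \in \partial f(x)$ for some $x \in \dom(f)$, then $f^*(v) = \langle v , x \rangle - f(x)$ is finite and Attouch's theorem (Theorem~\ref{thmattouch}) provides $(x_\t,v_\t) \to (x,v)$ with $v_\t \in \partial f_\t(x_\t)$ and $f_\t(x_\t) \to f(x)$. Consequently $f_\t^*(v_\t) = \langle v_\t , x_\t \rangle - f_\t(x_\t) \to \langle v , x \rangle - f(x) = f^*(v)$, which furnishes a strongly convergent recovery sequence at such a $v$. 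For a general $v \in \dom(f^*)$, the Br\o ndsted-Rockafellar theorem delivers a sequence $(v^k)_{k \in \N} \to v$ with $v^k \in \D(\partial f^*)$ and $f^*(v^k) \to f^*(v)$; applying the previous construction at each $v^k$ and extracting diagonally produces the required sequence. For $v \notin \dom(f^*)$ the inequality is trivial since $f^*(v) = +\infty$.

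The main obstacle lies in the recovery sequence: the natural tool (Attouch's theorem) only guarantees the existence of a recovery sequence at points $v$ in the range of $\partial f$, whereas Proposition~\ref{propcharact} requires one at every $v \in \H$. The resolution relies crucially on the Br\o ndsted-Rockafellar density of $\D(\partial f^*)$ in $\dom(f^*)$ together with the continuity of $f^*$ along this approximation, followed by a careful diagonal extraction that preserves the strong convergence $v_\t \to v$ as $\t \to 0$.
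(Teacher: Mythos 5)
The paper offers no proof of this statement: it is quoted directly from Mosco's 1971 paper, so there is no internal argument to compare against. Your proof is the classical one and it is correct. The reduction to a single implication via $f^{**}=f$ is clean; the liminf estimate, obtained by pairing a weakly convergent family $(v_\t)\rightharpoonup v$ against a strong recovery sequence $(x_\t)\to x$ for $f$ through the Fenchel inequality and then taking the supremum over $x\in\dom(f)$, is exactly right (the weak--strong pairing step uses boundedness of $(v_\t)$, which is automatic once, as in the paper's definitions, limits are taken along sequences $t_n\to 0$); and the recovery sequence built first on $\D(\partial f^*)=\mathrm{Range}(\partial f)$ and then extended by density and diagonal extraction is the standard route.

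Two steps deserve explicit justification rather than a bare citation. First, Theorem~\ref{thmattouch} as stated in the paper only asserts the convergence of function values $f_\t(z_\t)\to f(z)$ for \emph{one} distinguished pair in condition (ii). For an arbitrary $(x,v)\in\Gr(\partial f)$, the graphical convergence in (i) does give approximating pairs $(x_\t,v_\t)\in\Gr(\partial f_\t)$ with $(x_\t,v_\t)\to(x,v)$, but the convergence $f_\t(x_\t)\to f(x)$ along them must be derived: the lower bound is the Mosco liminf inequality, and the upper bound follows by taking a recovery sequence $(y_\t)\to x$ with $\limsup f_\t(y_\t)\leq f(x)$ and using the subgradient inequality $f_\t(x_\t)\leq f_\t(y_\t)-\langle v_\t , y_\t - x_\t\rangle$. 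Second, the paper invokes Br\o ndsted--Rockafellar only to get $\D(\partial f)\neq\emptyset$; you need the stronger (still standard) corollary that $\D(\partial f^*)$ is dense in $\dom(f^*)$ \emph{with convergence of the function values} $f^*(v^k)\to f^*(v)$, which requires the quantitative form of the theorem through $\varepsilon$-subgradients. Both facts are true and classical, so these are gaps of exposition rather than of substance; with them filled in, your argument is a complete and self-contained proof of Mosco's theorem.
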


It can be easily proved that $f^*(v) + f(x) \geq \langle v , x \rangle$ for all $(x,v) \in \H \times \H$, with equality if and only if $v \in \partial f (x)$. From elementary calculus rules for conjugates, for some $x \in f^{-1}(\R)$ and $v \in \partial f(x)$, one can get that the conjugate of $\Delta^2_\t f(x|v) \in \Gamma_0(\H)$ is given by
$$ \Big( \Delta^2_\t f(x|v) \Big)^* = \Delta^2_\t f^* (v|x), $$
for all $\t > 0$. As a consequence, we obtain from Theorem~\ref{thmmosco} that $f$ is twice epi-differentiable at $x$ for $v$ if and only if $f^*$ is twice epi-differentiable at $v$ for $x$. In that case, $d^2_e f(x|v)$ and $d^2_e f^* (v|x)$ both belong to $\Gamma_0(\H)$ from Proposition~\ref{proptinde} and are mutually conjugate, that is, $(d^2_e f(x|v))^* = d^2_e f^* (v|x)$. We refer to \cite[Theorem~13.21 p.604]{RockWets} for details.

\medskip

In the $t$-dependent framework, for the simplicity of notations, we denote by $f^* : \R_+ \times \H \to \R \cup \{ +\infty \}$ the conjugate of any function $f \in \Gamma_0(\cdot,\H)$ defined by 
$$ f^*(t,v) := (f(t,\cdot))^*(v) $$ 
for all $(t,v) \in \R_+ \times \H$. Our aim in this section is to rely the twice epi-differentiability introduced in this paper (see Definition~\ref{deftwiceepidiff}) of $f$ with the twice epi-differentiability of $f^*$ as in the $t$-independent setting. However, as in Section~\ref{secnewassumption}, we will see that the situation is more complicated in the $t$-dependent framework and requires additional assumptions.

\medskip

Let $f \in \Gamma_0(\cdot,\H)$, $x \in f^{-1}(\cdot,\R)$ and $v \in \partial f(0,x)$. We introduce the nonnegative function 
$$ \fonction{\Phi^f_{(x|v)}}{\R_+}{\R_+}{t}{\Phi^f_{(x|v)} (t) := f^*(t,v)+f(t,x)-\langle v,x \rangle.} $$
Note that $\Phi^f_{(x|v)} (0) = 0$. In the particular case where $f$ is $t$-independent, it holds that $\Phi^f_{(x|v)} (t) = 0$ for all $t \geq 0$. Imitating the above $t$-independent framework, it can be proved that
$$ \Big( \Delta^2_\t f(x|v) \Big)^* = \Delta^2_\t f^* (v|x) + \dfrac{\Phi^f_{(x|v)} (\t) }{\t^2} ,$$
for all $\t > 0$. We deduce the following proposition.

\begin{proposition}
	Let $f \in \Gamma_0(\cdot,\H)$, $x \in f^{-1}(\cdot,\R)$ and $v \in \partial f(0,x)$. If $\Phi^f_{(x|v)}$ is twice differentiable at $t=0$ with $(\Phi^f_{(x|v)})'(0)=0$, then $f$ is twice epi-differentiable at $x$ for $v$ with $d^2_e f(x|v) \in \Gamma_0(\H)$ if and only if $f^*$ is twice epi-differentiable at $v$ for $x$ with $d^2_e f^*(v|x) \in \Gamma_0(\H)$. In that case
	$$ (d^2_e f(x|v))^* = d^2_e f^* (v|x) + \frac{1}{2} (\Phi^f_{(x|v)})''(0) .$$
\end{proposition}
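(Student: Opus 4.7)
The plan is to leverage the identity derived immediately before the statement, namely
$$\bigl(\Delta^2_\t f(x|v)\bigr)^* = \Delta^2_\t f^*(v|x) + \dfrac{\Phi^f_{(x|v)}(\t)}{\t^2},$$
together with Mosco's duality result (Theorem~\ref{thmmosco}), and to read off the limits thanks to the Taylor expansion of $\Phi^f_{(x|v)}$ at $t=0$. Since $v \in \partial f(0,x)$ is equivalent to the Fenchel--Young equality $f^*(0,v)+f(0,x)=\langle v,x\rangle$, we have $\Phi^f_{(x|v)}(0)=0$; the extra assumptions $(\Phi^f_{(x|v)})'(0)=0$ and twice differentiability at $0$ then yield
$$\dfrac{\Phi^f_{(x|v)}(\t)}{\t^2} \;\xrightarrow[\t\to 0]{}\; \dfrac{1}{2}(\Phi^f_{(x|v)})''(0),$$
so the displayed identity has a real, finite limit for its last term.

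For the forward implication, assume $(\Delta^2_\t f(x|v))_{\t>0}$ M-converges to $d^2_e f(x|v) \in \Gamma_0(\H)$. Theorem~\ref{thmmosco} transfers the M-convergence to the conjugates, giving that $(\Delta^2_\t f(x|v))^*$ M-converges to $(d^2_e f(x|v))^* \in \Gamma_0(\H)$. Then the elementary lemma that adding a converging family of real constants to an M-convergent family preserves M-convergence (the epigraphs are merely translated vertically) applied to the identity above shows that $\Delta^2_\t f^*(v|x)$ M-converges to $(d^2_e f(x|v))^* - \tfrac{1}{2}(\Phi^f_{(x|v)})''(0)$. The limit is still in $\Gamma_0(\H)$ since it differs from a $\Gamma_0$-function by a constant, which proves the twice epi-differentiability of $f^*$ and, after rearrangement, the announced formula.

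The converse implication follows by the symmetric argument: starting from $\Delta^2_\t f^*(v|x) \xrightarrow{M} d^2_e f^*(v|x)$, add the constants to get the M-convergence of $(\Delta^2_\t f(x|v))^*$ to $d^2_e f^*(v|x) + \tfrac{1}{2}(\Phi^f_{(x|v)})''(0) \in \Gamma_0(\H)$, then apply Theorem~\ref{thmmosco} again to recover the M-convergence of $\Delta^2_\t f(x|v)$ itself; computing the conjugate of that limit (using $g^{**}=g$ on $\Gamma_0(\H)$ and $(g+c)^*=g^*-c$) reproduces the same final equality.

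No step looks substantial once the conjugacy identity is in hand; the only point demanding care is the M-convergence perturbation lemma, which is truly elementary since the shift $\t \mapsto \Phi^f_{(x|v)}(\t)/\t^2$ is scalar-valued with a limit in $\R$. The Taylor argument needs $\Phi^f_{(x|v)}(0)=0$, which is exactly the Fenchel--Young equality and is what distinguishes the $t$-independent case (where $\Phi^f_{(x|v)}\equiv 0$ and the correction term vanishes) from the genuinely $t$-dependent setting treated here.
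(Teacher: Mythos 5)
Your proposal is correct and follows exactly the route the paper intends (the paper states the conjugacy identity $\bigl(\Delta^2_\t f(x|v)\bigr)^* = \Delta^2_\t f^*(v|x) + \Phi^f_{(x|v)}(\t)/\t^2$ and then simply writes ``we deduce the following proposition''): Fenchel--Young gives $\Phi^f_{(x|v)}(0)=0$, the Taylor expansion gives $\Phi^f_{(x|v)}(\t)/\t^2 \to \tfrac12 (\Phi^f_{(x|v)})''(0)$, Mosco's continuity of conjugation (Theorem~\ref{thmmosco}) transfers the M-convergence, and the vertical translation of epigraphs by a convergent scalar is indeed harmless by Proposition~\ref{propcharact}. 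The two directions and the final identity (via biconjugation and $(g+c)^*=g^*-c$) are all handled correctly.
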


\end{document}